\newtheorem{theorem}{Theorem}[section]
\newtheorem{lemma}{Lemma}[section]
\newtheorem{expl}{Example}[section]
\newtheorem{thm}{Theorem}[section]
\newtheorem{rem}[thm]{Remark}
\newcommand{\beq}[1]{\begin{equation} \label{#1}}
\newcommand{\eeq}{\end{equation}}
\newcommand{\bed}{\begin{displaymath}}
\newcommand{\eed}{\end{displaymath}}
\newcommand{\bea}{\bed\begin{array}{rl}}
\newcommand{\eea}{\end{array}\eed}
\newcommand{\barray}{\begin{array}{ll}}
\newcommand{\earray}{\end{array}}
\newcommand{\lf}{\lfloor}
\newcommand{\Spvek}[2][r]{%
  \gdef\@VORNE{1}
  \left(\hskip-\arraycolsep%
    \begin{array}{#1}\vekSp@lten{#2}\end{array}%
  \hskip-\arraycolsep\right)}
\def\vekSp@lten#1{\xvekSp@lten#1;vekL@stLine;}
\def\vekL@stLine{vekL@stLine}
\def\xvekSp@lten#1;{\def\temp{#1}%
  \ifx\temp\vekL@stLine
  \else
    \ifnum\@VORNE=1\gdef\@VORNE{0}
    \else\@arraycr\fi%
    #1%
    \expandafter\xvekSp@lten
  \fi}
  \newcommand\figcaption{\def\@captype{figure}\caption}
  \newcommand\tabcaption{\def\@captype{table}\caption}
\def\sqr#1#2{{\vcenter{\vbox{\hrule height.#2pt
\hbox{\vrule width.#2pt height#1pt \kern#1pt
\vrule width.#2pt} \hrule height.#2pt}}}}
\newcommand{\E}{\mathbb{E}}
\newcommand{\PP}{\mathbb{P}}
\newcommand{\RR}{\mathbb{R}}
\def\br{\breve}
\def\tr{\triangle} \def\lf{\left} \def\rt{\right}
\def\T{\tau}
\def\<{\langle} \def\>{\rangle}
\def\1{\oslash} \def\2{\oplus} \def\3{\otimes} \def\4{\ominus}
\def\5{\circ} \def\6{\odot} \def\7{\backslash} \def\8{\infty}
\def\9{\bigcap} \def\0{\bigcup} \def\+{\pm} \def\-{\mp}
\def\[{\langle} \def\]{\rangle}
\newcommand{\dis}{\displaystyle}
\def\nn{\nonumber}
\def\bc{\begin{center}}       \def\ec{\end{center}}
\def\ba{\begin{array}}        \def\ea{\end{array}}
\def\be{\begin{equation}}     \def\ee{\end{equation}}
\def\bea{\begin{eqnarray}}    \def\eea{\end{eqnarray}}
\def\beaa{\begin{eqnarray*}}  \def\eeaa{\end{eqnarray*}}
\def\la{\label}
\begin{document}
\title{  The Strong Convergence and  Stability of Explicit
Approximations for Nonlinear Stochastic  Delay  Differential Equations
}

\author{Guoting Song\thanks{School of Mathematics and Statistics,
Northeast Normal University, Changchun, 130024, China. }
\and  Junhao Hu\thanks{School of Mathematics and Statistics,
South-Central University for Nationalities, Wuhan, 430074, China. Research
of this author was supported  by National Natural Science Foundation of
China (61876192). }
\and Shuaibin Gao\thanks{School of Mathematics and Statistics,
South-Central University for Nationalities, Wuhan, 430074, China.}
\and Xiaoyue Li\thanks{School of Mathematics and Statistics,
Northeast Normal University, Changchun,  130024, China. Research
of this author was supported  by National Natural Science Foundation of
China (11971096) and the Fundamental Research Funds for the
Central Universities.}
\date{}}

\maketitle

\begin{abstract}
This paper focuses on  explicit
approximations for  nonlinear stochastic  delay  differential equations (SDDEs). Under the weakly local Lipschitz and some suitable conditions, a generic truncated Euler-Maruyama (TEM) scheme for SDDEs is proposed, which numerical solutions are
 bounded and converge to the exact solutions in $q$th moment for $q>0$. Furthermore, the $ {1}/{2}$ order convergent rate is yielded.
Under  the  Khasminskii-type condition, a more precise  TEM scheme is given, which numerical solutions are
   exponential
 stable in mean square and $\PP-1$. Finally,  several  numerical experiments  are carried out to illustrate our results.
\end{abstract}

 \vspace{3mm}
 \noindent {\bf Keywords.}
 stochastic  delay  differential equations;
the truncated Euler-Maruyama scheme;
the Khasminskii-type condition; the strong convergence;
 stability.

\section{Introduction}\label{Tntr}

This paper considers a stochastic delay differential equation (SDDE) described by
\begin{align}\la{s2.2}
\left\{
\begin{array}{ll}
\mathrm{d}x(t)=f(x(t),x(t-\tau))\mathrm{d}t +g(x(t),x(t-\tau))\mathrm{d}W(t),~~~t>0,&\\
~~~\!\!\!x(t)=\xi(t),~~~t\in[-\tau,~0],&\\
\end{array}
\right.
\end{align}
 where $\tau>0$ is a constant,  $f:  \RR^d \times \RR^d \rightarrow \RR^{d}$, and $g: \RR^d\times \RR^d  \rightarrow \RR^{d\times m}$. And $W(t)=( W_1(t),~W_2(t),~\cdots,~W_m(t) )^T$ is an m-dimensional
   Brownian motion in  the given complete probability space  $( \Omega,~\mathcal{F}, ~\PP )$  and   ${\mathcal{F}}_{t}$ is a natural filtration
 satisfying the usual conditions (that is, it is increasing and right
  continuous while ${\mathcal{F}}_{0}$ contains all $\mathbb{P}$-null sets).
 The SDDE  models play  a key role in communications, finance,
 medical sciences, ecology, and many other branches of industry and  science
  (see, e.g. \cite{Arriojas2007,Baker2000,Buckwar2000,Eurich1996,Mao2007,Mao2005,mao-yuan}). However,
 explicit solutions can hardly be obtained for  SDDEs and hence it is  necessary and significant to  develop their numerical methods.

 In fact, numerical
 methods  of SDDEs have attracted a lot of attentions.  Due to the easy implementation  explicit schemes have
  been established (see e.g. \cite{Baker2000,
  Bao2016,Buckwar2000,Dareiotis-Kumar-Sabanis,Guo2017,Sabanis2013,Kumar-Sabanis,
  Huang2020,Mao2003,mao-yuan,Niu2013,Song2019}), such as the  Euler-Maruyama (EM) scheme (see e.g.\cite{Baker2000,
  Bao2016,Buckwar2000,Sabanis2013,Kumar-Sabanis,Mao2003,mao-yuan}), the truncated EM scheme \cite{Guo2017}, the truncated
  Milstein scheme \cite{Song2019}, the projected EM scheme \cite{Huang2020},
 and the tamed Euler scheme \cite{Dareiotis-Kumar-Sabanis}. Since  implicit schemes
  sometimes achieve the better convergence rate,  some concentrated effort have
  been made into the implicit schemes (see e.g. \cite{Huang2014,Huang2012,Wang2011}).
  However, to the best of our knowledge,  most of the results  on the strong convergence rate of numerical solutions for  nonlinear SDDE (\ref{s2.2}) requires that $f$ and $g$ obey  the one-side Lipschitz condition
  $$2\langle x-\bar x ,f(x,y)-f(\bar x,\bar y)\rangle+|g(x,y)-g(\bar x,\bar y)|^2\leq C(|x-\bar x|^2+|y-\bar y|^2),$$
  where $x,~\bar x,~y,~\bar y\in\RR^d$ and $C$ is a constant. Although
a  kind of nonlinear SDDEs satisfies this condition,  a large kind of SDDEs is unavailable for it.
  For an example, consider the  scalar SDDE
  \begin{align}\la{intr_exp1}
\left\{
\begin{array}{ll}
\mathrm{d}x(t)=\left(x(t)-8x^{3}(t)
\right)\mathrm{d}t
+|x(t-1)|^{\frac{3}{2}}\mathrm{d}W(t),~~~t>0,&\\
~~\!x(t)=t^2,~~~t\in[-1, ~0].&\\
\end{array}
\right.
\end{align}
By computation, one notices
\begin{align*}
&2 \langle  x-\bar x ,8x^{3}-8\bar{x}^{3} \rangle+||y|^{\frac{3}{2}}-|\bar{y}|^{\frac{3}{2}}|^2\nn\\
=& 2(x-\bar x)^2-16(x-\bar x)^2(x^2+x\bar x+\bar x^2)+(|y|^{\frac{1}{2}}-|\bar{y}|^{\frac{1}{2}})^2 (|y|+|y|^{\frac{1}{2}} |\bar{y}|^{\frac{1}{2}}+|\bar{y}|)^2,
\end{align*}
which implies that the one-side Lipschitz condition doesn't hold for SDDE (\ref{intr_exp1}).
Guo-Mao-Yue in \cite{Guo2017} proposed a  truncated EM scheme to approximate SDDE (\ref{intr_exp1}), and yielded  the  mean square convergence rate, which is less than $ {1}/{2}$. 
Dareiotis-Kumar-Sabanis in \cite{Dareiotis-Kumar-Sabanis} gave the tamed Euler scheme for SDDE (\ref{intr_exp1}) and its convergence rate can achieve to $ {1}/{2}$  at some special time $T$. 
 For such kind of SDDEs without one-side Lipschitz condition,  to establish an appropriate numerical scheme and to estimate the $L^q$-convergence rate   in any time interval is still open for $q>2$.

On the other hand,  the stability of such SDDEs  is one of the major concerns in stochastic processes, systems theory and control \cite{Mao2007}. Especially,{
Mao-Rassias in \cite{Mao2005} established the  exponential moment stability  for such SDDEs under the local Lipschitz condition plus   the   Khasminskii-type condition
\be\la{khas}\mathcal{L}U(x,y)\leq -c_1U(x)+c_2U(y)-c_3V(x)+c_4V(y),\ee
where $U(\cdot)$ is a nonnegative continuously twice
 differentiable function on $\RR^d$, $V(\cdot)$ is a nonnegative continuously function on $\RR^d$,   the operator $\mathcal{L} $ is defined by \eqref{s2.1}, and   constants $c_i, ~i=1,\dots, 4$ are positive with certain restrictions. Li-Mao in \cite{li_mao2012} provided us with a criterion on the exponentially almost sure stability of the exact solution for such SDDEs.

 According to the  requirement of numerical experiments and simulations  the stability of the numerical solutions for SDDEs attracts much attention. Wu-Mao-Szpruch in \cite{wu_mao_lukas} gave a counterexample that the EM scheme can't reproduce the exponentially almost sure  stability for a  nonlinear SDDE while  the Backward EM (BEM) scheme can. 
Zhao-Yi-Xu in \cite{zhao_yi_xu} proved that the implicit split-step theta (SSD) method  preserves the exponential mean square stability under the Khasminskii-type condition for $\theta\in(\frac{1}{2},1]$. Nevertheless, it is known that more computational efforts and cost are required using
the implicit equation in each iteration. Thus easily implementable explicit methods for nonlinear
SDDEs are more desirable in order to capture the stability, which motivated the recent development of modified EM methods. {Cong-Zhan-Guo in \cite{cong-zhan-guo} proposed the partially truncated Euler-Maruyama method which reproduces the  almost sure exponentially
stability of the exact solution for  SDDEs with Markovian Switching under \eqref{khas} with $V(\cdot)\equiv0$.}
Although the various stable numerical methods are investigated well to design the explicit scheme keeping the stability for nonlinear SDDEs under the flexible Khasminskii-type condition \eqref{khas} remains to unsolved.
 Hence, to establish  an easy implementable numerical scheme capturing the stability of SDDEs is the other main aim.

 Motivated by the above works,  borrowing the ideas from \cite{Li2018} we  develop the  explicit truncated  numerical scheme  to approximate nonlinear SDDEs. Under the polynomial growth coefficient conditions the  $ {1}/{2}$ order  rate of strong convergence  is yielded for the TEM scheme. Moreover, a more precise TEM scheme is constructed, which numerical solutions realize  the underlying exponential  stability under the flexible Khasminskii-type condition. Some simulations are carried out to check the effectiveness  of the TEM schemes.

This paper is organized in the following way. Section \ref{NP} gives some notations and preliminary results
with respect to the exact solution for SDDE (\ref{s2.2}).
Section \ref{MR}  lists the main results, including the  convergence, the convergence rate and the stability.
 Section \ref{NE}  gives two examples and  the corresponding simulations to illustrate the main  results.
 Section \ref{Con}  concludes the paper.}
\section{Notations and preliminary results}\label{NP}
We firstly present some standard notations and definitions which are
necessary for further consideration. The norm of a vector $x\in \RR^d$ and
the Hilbert-Schmidt norm of a matrix $A\in \RR^{d\times m}$ are
respectively denoted by $|x|$ and $|A|$. The transpose of a vector $x\in \RR^d$ is denoted
by $x^T$ and the inner product of two vectors $x,y\in\RR^d$ is denoted
by $\langle x,y\rangle=x^Ty$. Let $[a]$ denote the integer part of the real
number $a$. For two real numbers a and b, let $a\vee{}b=\max(a,b)$ and $a\wedge{}b=\min(a,b)$.
Let $\mathbb{R}_{+}=[0,\infty)$ and $\tau>0$.
 By $\mathcal{C}([-\tau,~0];~\mathbb{R}^{d})$, we denote the  space of all continuous  $\mathbb{R}^{d}$-valued functions defined  on
$[-\tau,~0]$ equipped with the supremum norm $\|\xi\|
=\sup_{-\tau\leq\theta\leq0}|\xi(\theta)|$.
By $\mathcal{C}(\RR^d;   ~\RR_+)$,
 we denote  the space of all  continuous nonnegative functions defined on $\RR^d$.
  By $\mathcal{V}(\RR^d\times\RR^d;   ~\RR_+)$, we denote the space of all nonnegative functions $\hat V(x,y)$  defined on $\RR^d\times\RR^d$ satisfying $\hat V(x,x)=0$.
Moreover, denote by
   $\mathcal{C}^{2} (\RR^d;  ~{{\RR}}_+)$ the space of all continuously twice
 differentiable  nonnegative functions
  defined on $\RR^d$.
 If $U \in \mathcal{C}^{2} (\RR^d;   ~\RR_+)$, define an
operator ${\cal{L}}U$
 $:  \RR^d \times \RR^d\to  \RR$ by
\begin{align}\label{s2.1}
{\cal{L}}U(x,y) =&\langle f(x,y),U_x(x)\rangle
 +\frac{1}{2} \left\langle g(x,y),U_{xx}(x)g(x,y)\right\rangle.
\end{align}
For any set $A$,~$ \boldsymbol{1}_A(x)=1$ if $x\in A$  otherwise 0.
Let $\delta_1,~\delta_2$ be two  $\mathcal{F}_t$-stopping times
 with $\delta_1\leq\delta_2$ \hbox{a.s}, then define the stochastic interval
 $$[[\delta_1,\delta_2]]=\{(t,\omega)\in \RR_+\times \Omega:\delta_1\leq t\leq \delta_2\}.$$
Denote a generic positive constant by $C$  which value may vary in different appearance.

We impose the following hypotheses.

\textbf{(H1) } (the weakly  local   Lipschitz condition) For any  $l_1 >0$, there exists a positive
constant $L_{l_1} $ such that, for any $x,~\bar{x},~y\in \RR ^d  $
with  $|x |\vee|\bar{x}|\vee|{y}|\leq l_1$,
\begin{align*}
  |f(x, y)-f(\bar{x}, y)|\vee |g(x, y)-g(\bar{x}, y)|\leq{}
  L_{l_1} |x-\bar x|.
  \end{align*}

  \textbf{ (H2) }(the  Khasminskii-type condition) There exist constants $q>0,~K_1\geq0, ~K_2\geq0$  as well as a function $V_1 \in\mathcal{C}(\RR^d;   ~\RR_+)$ such that
\begin{align}\label{s2.4}
&(1+|x|^{2})^{\frac{q}{2}-1}\Big(\big\langle2x ,f(x,y)\big\rangle+\big((q-1)\vee1\big)|g(x,y)|^2\Big)\nn\\
&\leq{}K_1(1+|x|^{q}+|y|^{q})-K_2(V_1(x)-V_1(y))
,~~~~\forall~ x,~y  \in\RR^{d}.
\end{align}

\textbf{(H3)}
 For any given positive constant $M_1>0$,  functions $f(x,y)$ and $g(x,y)$ are uniformly continuous in the argument corresponding  $y$ for any  $x\in\RR^d$ satisfying $|x|\leq M_1$, that is , for any $x,~y,~\bar y\in \RR^d$ with $|x|\leq M_1$,
\begin{align*}
\lim_{y\rightarrow \bar y}\sup_{|x|\leq M_1}\big[|f(x,y)-f(x,\bar y)|+|g(x,y)-g(x,\bar y)|\big]=0.
\end{align*}

\begin{theorem}\la{th1}
  Let  
  $(\textup{H}1)$ and $(\textup{H}2)$ hold. Then  SDDE \eqref{s2.2} with an initial data $\xi\in{}\mathcal{C}([-\tau,~0]; ~\RR^d)$ has a unique global solution $x(t)$ satisfying
   \be\la{s2.5}\sup_{0\leq t\leq T}
   \E|x(t)|^{q  } \leq C,~~~~~~\forall ~T>0.
   \ee
Furthermore, for any  constant $M_2>\|\xi\|$, let
 \begin{align}\label{sM_1}
 \vartheta_{M_2}=\inf\left\{ t\geq -\tau:|x(t)|\geq M_2\right\}.
 \end{align}
 Then we obtain
\begin{align} \la{s2.6}
\mathbb{P}\{\vartheta_{M_2}\leq T\}\leq \frac{C}{M_2^{q}}.
\end{align}
\end{theorem}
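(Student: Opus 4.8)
The plan is to combine the classical localization-and-Lyapunov scheme with the method of steps. Since (H1) provides only a local Lipschitz bound in the first argument, I would first obtain local existence and uniqueness by freezing the delay: on $[0,\tau]$ the term $x(t-\tau)=\xi(t-\tau)$ is a known continuous function, so \eqref{s2.2} reduces to an ordinary It\^o SDE whose drift and diffusion are locally Lipschitz in the state variable by (H1); standard SDE theory then yields a unique maximal solution up to an explosion time, a priori possibly finite. Iterating this construction over $[\tau,2\tau],[2\tau,3\tau],\dots$ (the method of steps, with the solution from the previous interval serving as the now-known delay term) gives a unique solution up to the global explosion time $\vartheta_\infty=\lim_{n\to\infty}\vartheta_n$, where $\vartheta_n=\inf\{t:|x(t)|\ge n\}$. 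It then remains to prove $\vartheta_\infty=\infty$ a.s. and to bound the moments, for which (H2) is the essential tool.

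The crux is to choose the Lyapunov function $U(x)=(1+|x|^2)^{q/2}$ (which lies in $\mathcal{C}^2(\RR^d;\RR_+)$ for every $q>0$) and to check that (H2) controls $\mathcal{L}U$. Computing $U_x$ and $U_{xx}$ and substituting into \eqref{s2.1}, one obtains
\begin{align*}
\mathcal{L}U(x,y) =&\ \frac{q}{2}(1+|x|^2)^{\frac{q}{2}-1}\big[2\langle x,f(x,y)\rangle+|g(x,y)|^2\big] \\
&+ \frac{q(q-2)}{2}(1+|x|^2)^{\frac{q}{2}-2}|g(x,y)^Tx|^2.
\end{align*}
The delicate point is the sign of the last term. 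When $q\ge2$ I would bound $|g^Tx|^2\le|x|^2|g|^2\le(1+|x|^2)|g|^2$ to absorb it, producing the coefficient $q-1$ on $|g|^2$; when $0<q<2$ the factor $q-2$ is negative and the term is simply discarded, leaving coefficient $1$. This is exactly why (H2) carries the weight $(q-1)\vee1$, and in both regimes one reaches $\mathcal{L}U(x,y)\le\frac{q}{2}\big[K_1(1+|x|^q+|y|^q)-K_2(V_1(x)-V_1(y))\big]$. I expect this case split, and matching the constants to the stated form of (H2), to be the main obstacle.

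With this bound I would apply It\^o's formula to $U(x(t\wedge\vartheta_n))$, take expectations to annihilate the martingale term, and treat the delay via the change of variables $\int_0^{t}h(x(s-\tau))\,ds=\int_{-\tau}^{t-\tau}h(x(u))\,du\le\int_{-\tau}^0 h(\xi(u))\,du+\int_0^{t}h(x(u))\,du$ applied to $h=|\cdot|^q$ and $h=V_1$. The key gain is that the $V_1$ contributions telescope: the delayed term $+K_2\int_0^t V_1(x(s-\tau))ds$ cancels $-K_2\int_0^t V_1(x(s))ds$ up to the finite boundary contribution $K_2\int_{-\tau}^0 V_1(\xi(u))du$, finite because $\xi$ and $V_1$ are continuous. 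Using $|x|^q\le U(x)$ and collecting constants, I obtain $\mathbb{E}U(x(t\wedge\vartheta_n))\le C_0+qK_1\int_0^t\mathbb{E}U(x(s\wedge\vartheta_n))\,ds$, and Gronwall's inequality gives a bound $C_T$ independent of $n$. Then $(1+n^2)^{q/2}\,\mathbb{P}(\vartheta_n\le t)\le\mathbb{E}U(x(t\wedge\vartheta_n))\le C_T$ forces $\mathbb{P}(\vartheta_n\le t)\to0$, proving non-explosion, while Fatou's lemma yields \eqref{s2.5}. For \eqref{s2.6} I would run the identical estimate with the localizing time $\vartheta_{M_2}$ and use that $|x(\vartheta_{M_2})|=M_2$ on $\{\vartheta_{M_2}\le T\}$, so $M_2^q\,\mathbb{P}(\vartheta_{M_2}\le T)\le\mathbb{E}|x(T\wedge\vartheta_{M_2})|^q\le C$, the claimed Chebyshev-type bound.
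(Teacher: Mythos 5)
Your proposal is correct, and its analytical core coincides with the paper's: the Lyapunov function $U(x)=(1+|x|^2)^{q/2}$, the sign-based case split that produces the weight $(q-1)\vee 1$ (discard the $q-2$ term for $q<2$, absorb $|g^Tx|^2\le(1+|x|^2)|g|^2$ for $q\ge 2$), the change-of-variables/telescoping treatment of the delayed terms $|y|^q$ and $V_1(y)$ against the initial segment, Gronwall, and the Chebyshev step yielding \eqref{s2.6}. Where you genuinely differ is the scaffolding around this estimate. The paper first derives from (H2) the one-sided local growth bound $2\langle x,f(x,y)\rangle+|g(x,y)|^2\le C(l)(1+|x|^2)$ for $|y|\le l$ and then invokes \cite[Theorem 2.1]{Sabanis2013} for global existence and uniqueness, and invokes \cite[Theorem 3.1]{Mao2005} for the moment bound \eqref{s2.5}, reserving the explicit Dynkin/Gronwall computation only for the process stopped at $\vartheta_{M_2}$. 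You instead make everything self-contained: the method of steps (freezing the delay on successive intervals of length $\tau$) produces the unique maximal solution up to the explosion time, the Khasminskii test with the stopping times $\vartheta_n$ gives non-explosion since the stopped bound is uniform in $n$, and Fatou upgrades it to \eqref{s2.5}. Your route buys independence from the two cited theorems and a cleaner logical order (non-explosion, moments, and \eqref{s2.6} all flow from one stopped estimate); the paper's route buys brevity. One caveat, shared by both arguments rather than a gap in yours: (H1)--(H2) say nothing about regularity of $f,g$ in the delay variable $y$, yet your frozen-coefficient SDE needs $t\mapsto f(x,\xi(t-\tau))$ to be measurable and locally bounded, just as the paper's citation presumes continuous coefficients; this continuity in $y$ is a hypothesis the paper leaves tacit, and your proof inherits exactly the same tacit assumption.
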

\begin{proof}\textbf {Proof.}
Fix a positive constant $l$, it follows from  (\ref{s2.4})  that for any $x,~y\in\RR^d$ with $|y|\leq l$,
 \begin{align}\label{SONG1}
 &\big\langle2x ,f(x,y)\big\rangle+|g(x,y)|^2\nn\\
 \leq&\frac{1}{(1+|x|^{2})^{\frac{q}{2}-1}}[K_1(1+|x|^{q}+|y|^{q})-K_2(V_1(x)-V_1(y))]\nn\\
\leq&2K_1(1+|x|^2)+( l^qK_1+\max_{|y|\leq l}V_1(y)K_2)(1+|x|^2)\leq C(l)(1+|x|^2).
 \end{align}
Under (H1) and (\ref{SONG1}), due to \cite[Theorem 2.1]{Sabanis2013}  SDDE (\ref{s2.2}) admits   a unique global solution with the initial data
$\xi\in{}\mathcal{C}([-\tau,~0]; ~\RR^d)$.
Let $U(x)=\big(1+|x|^2\big)^{\frac{q}{2}}$,
   where  $q$ is given in  (H2). Due to  (\ref{s2.4}) we compute
 \begin{align}\label{s2.7}
 &{\cal{L}}U\Big(x(t),x\big(t-\tau\big)\Big)\nn\\
=&\frac{q}{2}\big(1+|x(t)|^{2}\big)^{\frac{q}{2}-2}
\Bigg[\big(1+|x(t)|^{2}\big)\Big(\big\langle2x(t) ,f\big(x(t),x(t-\tau)\big)\big\rangle\nn\\
&+\big| g\big(x(t),x(t-\tau)\big)\big|^2\Big)+\big(q-2\big)\big|\big\langle x(t),g\big(x(t),x(t-\tau)\big)\big\rangle \big|^{2}\Bigg]\nn\\
 \leq&\frac{q}{2}\big(1+|x(t)|^{2}\big)^{\frac{q}{2}-2}\Bigg[\big(1+|x(t)|^{2}\big)
 \Big(\big\langle2x(t) ,f\big(x(t),x(t-\tau)\big)\big\rangle\nn\\
 &+\big|g\big(x(t),x(t-\tau)\big)\big|^2\Big)+\big((q-2)\vee 0\big)\big|x(t)\big|^2\big|g\big(x(t),x(t-\tau)\big)\big|^2\Bigg]\nn\\
 \leq&\frac{q}{2}\big(1+|x(t)|^{2}\big)^{\frac{q}{2}-1}\Big(\big\langle2x(t) ,f\big(x(t),x(t-\tau)\big)\big\rangle\nn\\
 &+((q-1)\vee 1)\big|g\big(x(t),x(t-\tau)\big)\big|^2\Big)\nn\\
 \leq& \frac{q}{2}K_1\Big(1+\big|x(t)\big|^{q}+\big|x(t-\tau)\big|^{q}\Big)
 -\frac{q}{2}K_2\Big(V_1\big(x(t)\big)-V_1\big(x(t-\tau)\big)\Big).
 \end{align}
By \cite[Theorem 3.1]{Mao2005} together with the definition of $U$, it yields
\begin{align*}
&\sup_{0\leq t\leq T}\E\big(1+|x(t)|^2\big)^{\frac{q}{2}}\nn\\
&\leq \bigg(U\big(\xi(0)\big)+\frac{q}{2}\int^{0}_{-\tau}\Big[K_1U\big(\xi(s)\big)
+K_2V_1\big(\xi(s)\big)\Big]ds+\frac{q}{2}K_1T\bigg) e^{qK_1T}=:C.
\end{align*}
Due to (\ref{s2.7}) and  using Dynkin's formula we  get that,  for any $0\leq t\leq T$,
 \begin{align*}
 &\E\big(1+|x(t\wedge\vartheta_{M_2})|^2\big)^{\frac{q}{2}}\nn\\
 \leq&\big(1+|\xi(0)|^2\big)^{\frac{q}{2}}
 +\frac{q}{2}\E\int_{0}^{t\wedge\vartheta_{M_2}}
 \bigg[K_1\Big(1+\big(1+|x(s)|^2\big)^{\frac{q}{2}}\nn\\
 &+\big(1+|x(s-\tau)|^2\big)^{\frac{q}{2}}\Big)
 -K_2\big(V_1\big(x(s)\big)-V_1\big(x(s-\tau)\big)\big)\bigg]\mathrm ds\nn\\
 \leq&\big(1+|\xi(0)|^2\big)^{\frac{q}{2}}+\frac{q}{2}K_1T
 + qK_1\E\int_{0}^{t\wedge\vartheta_{M_2}}
\big(1+|x(s)|^2\big)^{\frac{q}{2}}\mathrm ds\nn\\
 &+\frac{q}{2}K_1\int_{-\tau}^{0}\big(1+|\xi(s)|^2\big)^{\frac{q}{2}}\mathrm ds
-\frac{q}{2}K_2\E\int_{0}^{t\wedge\vartheta_{M_2}}V_1\big(x(s)\big) \mathrm ds\nn\\
 &+\frac{q}{2}K_2\E\int_{0}^{t\wedge\vartheta_{M_2}}V_1\big(x(s)\big) \mathrm ds  +\frac{q}{2}K_2\int_{-\tau}^{0}V_1\big(\xi(s)\big)\mathrm ds \nn\\
 \leq &\big(1+|\xi(0)|^2\big)^{\frac{q}{2}}+\frac{q}{2}K_1T
 +qK_1\E\int_{0}^{t\wedge\vartheta_{M_2}}\big(1+|x(s)|^2\big)^{\frac{q}{2}}\mathrm ds\nn\\
 &+\frac{q}{2}K_1\int_{-\tau}^{0}\big(1+|\xi(s)|^2\big)^{\frac{q}{2}}\mathrm ds+\frac{q}{2}K_2\int_{-\tau}^{0}V_1\big(\xi(s)\big) \mathrm ds\nn\\
 =:&C_1+qK_1\E\int_{0}^{t\wedge\vartheta_{M_2}}\big(1+|x(s)|^2\big)^{\frac{q}{2}}\mathrm ds,
 \end{align*}
 which implies
 \begin{align*}
 &\sup_{0\leq t\leq T}\E\big(1+|x(t\wedge\vartheta_{M_2})|^2\big)^{\frac{q}{2}}\nn\\
 \leq& C_1+qK_1\int_{0}^{T}\sup_{0\leq s\leq t}
 \E\big(1+|x(s\wedge\vartheta_{M_2})|^2\big)^{\frac{q}{2}}\mathrm dt.
 \end{align*}
Applying the Gronwall inequality \cite[p.45, Theorem 8.1]{Mao2007} yields that
\begin{align*}
\sup_{0\leq t\leq T}\E\big(1+|x(t\wedge\vartheta_{M_2})|^2\big)^{\frac{q}{2}}
\leq C_1e^{qK_1T}.
\end{align*}
Thus
\begin{align*}
\mathbb{P}\{\vartheta_{M_2}\leq T\}M_2^q
\leq\E\Big[|x(\vartheta_{M_2})|^q\boldsymbol{1}_{\{\vartheta_{M_2}\leq T\}}\Big]
&\leq\E\big(1+|x(T\wedge\vartheta_{M_2})|^2\big)^{\frac{q}{2}}\leq C.
\end{align*}
Then the required inequality \eqref{s2.6} follows.
\end{proof}

\section{Main results}\label{MR}
 In order to construct an appropriate numerical scheme, we firstly estimate the growth rate of coefficients.
 Under (H1) and (H3),  choose a strictly increasing continuous function $\Phi : [1, \infty)\rightarrow \RR_+$  satisfying
\begin{align}\la{s3.2}
\sup_{|x|\vee |y|\leq l} \dis\left(\frac{ |f (x,y)|}{1+|x|}\vee\frac{| g(x, y)|^2 }{(1+|x|)^2}\right)\leq{}\Phi(l),~~~\forall~l\geq1.
\end{align}
Let $\Phi^{-1} : [\Phi(1), \infty) \rightarrow \RR_{+}$ be the inverse function of $\Phi$. For any given  stepsize $\triangle\in(0,1]$, let
 \begin{align}\label{s3.4}
h_{\Phi, \mu}(\triangle )=K \triangle^{-\mu},
\end{align}
 where $K:=\Phi(\|\xi\|\vee 1)$ and $\mu\in(0,\frac{1}{2}]$.
 ~Define a truncation mapping $\Upsilon_{\Phi,\mu}^{\triangle }:\RR^d\rightarrow \RR^d¡¡$ by
\begin{align*}
\Upsilon_{\Phi,\mu}^{\triangle }(x)= \Big(|x|\wedge \Phi^{-1}\big( h_{\Phi,\mu}(\triangle)\big)\Big) \frac{x}{|x|},
\end{align*}
where $\frac{x}{|x|}=\mathbf{0}$ when $x=\mathbf{0}\in \mathbb{R}^d$.

Then the truncated Euler-Maruyama(TEM) scheme  SDDE (\ref{s2.2}) as follows:
Choose a positive integer $N$ such that $\tr=\frac{\T}{N}\in(0,~1]$.
Define $t_{i}=i\tr,~\forall~i\geq-N$.
 And define
 \begin{align}\la{s3.6}
\left\{
\begin{array}{lll}
z_{i}^\tr=\xi(i\tr), ~\forall ~i=-N,\ldots,0,&\\
\breve z_{i+1}^\tr=z_{i}^\tr+f(z_{i}^\tr,z_{i-N}^\tr)\tr +g(z_{i}^\tr, z_{i-N}^\tr)\triangle W_i,~\forall ~i=0,1,\ldots,& \\
 z_{i+1 }^\tr =\Upsilon_{\Phi,\mu}^{\triangle}(\breve  z_{i+1}^\tr),&\\
\end{array}
\right.
\end{align}
where $\triangle W_i=W(t_{i+1})-W(t_{i})$.  So this scheme prevents the diffusion term from producing extra-ordinary large value.
{ One observes that
\begin{align}\la{s3.5}
|f (z_{i}^\tr, z_{i-N}^\tr)| \leq h_{\Phi,\mu}(\tr )  (1+| z_{i}^\tr|), ~~~|g(z_{i}^\tr, z_{i-N}^\tr)| \leq    h^{\frac{1}{2}}_{\Phi,\mu}(\tr) (1+| z_{i}^\tr|).
\end{align}
Define two continuous-time numerical schemes $\breve  z_{\tr}(t),~z_{\tr}(t)$ by
 \begin{align}\la{s3.7}
\breve  z_{\tr}(t):=\breve  z_{i}^\tr,~~~~z_{\tr}(t):=z_{i}^\tr,~~~~\forall t\in[t_i,  t_{i+1}).
\end{align}}
\subsection{Moment boundedness}
To study the convergence of the  TEM scheme (\ref{s3.6}), we need to get the $q$th moment boundedness  of the TEM scheme \eqref{s3.6}.
  \begin{theorem}\la{th3}
Assume that $(\textup{H}1)$-$(\textup{H}3)$ hold. Then the TEM scheme \eqref{s3.6} has the following property
  \be\la{Y_1}
  \sup_{0<\tr\leq 1}\sup_{0\leq i\tr\leq T}
   \E|z_{i}^\tr|^{q } \leq C,~~~~\forall ~T>0.
   \ee
  \end{theorem}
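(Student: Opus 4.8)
The plan is to mimic the continuous-time argument of \thmref{th1}, but now applied to the continuous-time interpolation $\breve z_\tr(t)$ of the TEM scheme, with the truncation mapping $\Upsilon^\tr_{\Phi,\mu}$ controlling the extra error terms. First I would fix $\tr\in(0,1]$ and apply It\^o's formula to $U(\breve z_\tr(t))=(1+|\breve z_\tr(t)|^2)^{q/2}$ over each subinterval $[t_i,t_{i+1})$, where on that interval $\breve z_\tr$ solves the SDE $\mathrm d\breve z_\tr(t)=f(z_i^\tr,z_{i-N}^\tr)\mathrm dt+g(z_i^\tr,z_{i-N}^\tr)\mathrm dW(t)$ with constant coefficients. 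The key point is that $z_i^\tr=\Upsilon^\tr_{\Phi,\mu}(\breve z_i^\tr)$ differs from $\breve z_i^\tr$, so I must carefully track the discrepancy between $U$ evaluated at the post-truncation node value and the pre-truncation continuous path. Because $|\Upsilon^\tr_{\Phi,\mu}(x)|\le|x|$ and the mapping is a radial contraction, one has $U(z_i^\tr)\le U(\breve z_i^\tr)$, which lets me pass freely from the continuous interpolation back to the genuine nodal values after taking expectations.

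Next I would compare the generator of the frozen-coefficient SDE against the true $\mathcal L U$. Writing $x=z_i^\tr$, $y=z_{i-N}^\tr$ and letting $\bar x$ range over the continuous path value $\breve z_\tr(s)$ for $s\in[t_i,t_{i+1})$, the increment of $U$ produces the ``correct'' term $\mathcal L U(x,y)$ (which by \eqref{s2.7} in the proof of \thmref{th1} is bounded by $\frac q2 K_1(1+|x|^q+|y|^q)-\frac q2 K_2(V_1(x)-V_1(y))$) plus an error coming from evaluating the coefficients at the node $(x,y)$ rather than at $(\bar x,y)$. This is where (H1) and the growth bound \eqref{s3.5} enter: the coefficient values are bounded by $h_{\Phi,\mu}(\tr)(1+|x|)$ and $h^{1/2}_{\Phi,\mu}(\tr)(1+|x|)$, so the one-step deviation $|\breve z_\tr(s)-z_i^\tr|$ is $O(h_{\Phi,\mu}(\tr)\tr)=O(\tr^{1-\mu})$ in drift and $O(h^{1/2}_{\Phi,\mu}(\tr)\tr^{1/2})=O(\tr^{(1-\mu)/2})$ in the martingale part. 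Using $\mu\le\frac12$ these tend to $0$, and the resulting error terms in $\E U$ can be absorbed into a Gronwall-type estimate with the $(1+|x|^q)$ terms.

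The strategy then is to sum the one-step increments, take expectations to kill the martingale contributions, and collect the delay terms. The $V_1$ contributions telescope exactly as in \thmref{th1}: the $+\frac q2 K_2 V_1(y)$ at step $i$ cancels against the $-\frac q2 K_2 V_1(x)$ further along, leaving only an initial-segment boundary term $\frac q2 K_2\int_{-\tau}^0 V_1(\xi(s))\,\mathrm ds$ that is finite since $V_1\in\mathcal C(\RR^d;\RR_+)$ and $\xi$ is continuous. Likewise the $|y|^q=|z_{i-N}^\tr|^q$ delay terms are shifted copies of the $|x|^q$ terms, bounded over $[-\tau,0]$ by $\|\xi\|^q$ and otherwise folded back into the running sum. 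Setting $m_\tr(t)=\sup_{0\le s\le t}\E U(z_\tr(s))$ (using the contraction $U(z_i^\tr)\le U(\breve z_i^\tr)$ to work with the genuine scheme), I would obtain an inequality of the form $m_\tr(t)\le C_1+C_2\int_0^t m_\tr(s)\,\mathrm ds$ with $C_1,C_2$ independent of $\tr$, and conclude \eqref{Y_1} by the Gronwall inequality \cite[p.45, Theorem 8.1]{Mao2007}, uniformly in $\tr\in(0,1]$.

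I expect the main obstacle to be the bookkeeping of the truncation error rather than any deep inequality: specifically, bounding the difference $U(\breve z_\tr(s))-U(z_i^\tr)$ and the coefficient-freezing error uniformly in $\tr$ and $i$, and verifying that the factor $h_{\Phi,\mu}(\tr)=K\tr^{-\mu}$ blowing up is exactly compensated by the $\tr$-powers from the step length so that $\mu\le\frac12$ keeps every error term bounded. A subtle point is that (H3), the uniform continuity in $y$, is needed to control the mismatch in the delay argument if one refines the comparison, but for pure moment boundedness the growth bound \eqref{s3.5} together with (H2) should suffice; I would keep (H3) in reserve for handling any term where the node value $z_{i-N}^\tr$ must be compared to a continuous delay value.
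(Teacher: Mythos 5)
Your overall architecture (one-step expansion of $U$, the contraction $U(z_i^\tr)\le U(\breve z_i^\tr)$, telescoping of the delay and $V_1$ terms, discrete Gronwall) matches the paper's proof, which carries out the same program discretely via conditional expectations rather than via It\^o's formula on a frozen-coefficient interpolant. But there is a genuine gap at the heart of your error analysis: you claim that because the one-step deviation $|\breve z_\tr(s)-z_i^\tr|$ is $O(\tr^{1-\mu})$ in drift and $O(\tr^{(1-\mu)/2})$ in the martingale part, the error terms ``can be absorbed into a Gronwall-type estimate.'' Power counting shows this fails at first order. The dominant error in the drift contribution is of size $|f_i|\cdot|U_{xx}|\cdot\E|\breve z_\tr(s)-z_i^\tr|$ per unit time; with $|f_i|\le h_{\Phi,\mu}(\tr)(1+|z_i^\tr|)$ from \eqref{s3.5} and the martingale part of the deviation of size $h^{1/2}_{\Phi,\mu}(\tr)\tr^{1/2}(1+|z_i^\tr|)$, this is of order $K^{3/2}\tr^{(1-3\mu)/2}\,(1+|z_i^\tr|^2)^{q/2}$, which diverges as $\tr\to0^+$ for every $\mu\in(\tfrac13,\tfrac12]$; Gronwall then yields only $C_1e^{CK^{3/2}\tr^{(1-3\mu)/2}T}$, not a bound uniform in $\tr$. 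Since the theorem (and Remark \ref{r2}, where $\mu=\frac{r(\alpha+2)}{2(q-r)}$ may equal $\tfrac12$) needs the full range $\mu\in(0,\tfrac12]$, your argument as written covers only $\mu\le\tfrac13$. What saves the argument---and what the paper's proof is organized around---is that the first-order deviation is a Brownian increment with \emph{zero conditional mean}: in the paper's notation, $\E(\Gamma_{i-1}|\mathcal{F}_{t_{i-1}})$ in \eqref{s4.5} retains only $2\langle z_{i-1}^\tr,f_{i-1}\rangle\tr+|g_{i-1}|^2\tr+|f_{i-1}|^2\tr^2$ because the terms odd in $\tr W_{i-1}$ vanish by \eqref{s4.4}, and likewise in the $\Gamma^2$ and $\Gamma^3P_k(\Gamma)$ estimates via \eqref{ssg4.4}; every surviving error term then carries a factor $\tr^{j-k\mu}$ with $j-k\mu\ge1$ when $\mu\le\tfrac12$ (e.g.\ $|f_{i-1}|^2\tr^2\le K^2\tr^{2-2\mu}(1+|z_{i-1}^\tr|)^2$). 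To repair your proof you must Taylor-expand $U_x$, $U_{xx}$ to second order around $z_i^\tr$ and apply the conditional moment identities \emph{before} passing to absolute values; smallness of the deviation alone cannot close the estimate.

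A secondary problem: you describe the error as ``coming from evaluating the coefficients at the node $(x,y)$ rather than at $(\bar x,y)$,'' suggesting a coefficient-freezing comparison controlled by (H1). That route fails independently: (H1) is only a local Lipschitz condition with constant $L_{l_1}$ on the ball of radius $l_1$ and no control on how $L_{l_1}$ grows, while the relevant radius here is $\Phi^{-1}(h_{\Phi,\mu}(\tr))\to\infty$ (and the in-step path value $\breve z_\tr(s)$ is not bounded at all, since the Brownian increment is unbounded). The comparison must be made with $\mathcal{L}U$ evaluated at the node itself, so that the error sits entirely in the derivatives $U_x,U_{xx}$ as above; in this theorem (H1) and (H3) enter only through the construction of $\Phi$ in \eqref{s3.2}, not through any Lipschitz estimate inside the moment bound. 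Finally, note that the paper's expansion of $(1+\Gamma_{i-1})^{q/2}$ via the cited polynomial lemma handles all $q>0$ uniformly, including $0<q<2$ where the sign of $\frac{q(q-2)}{8}$ is exploited in \eqref{s4.6}; if you instead use It\^o's formula, smoothness of $U$ makes that step unproblematic, but the second-order cancellation mechanism remains indispensable.
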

\begin{proof}\textbf{Proof.}
Define $f_i=f(z_{i}^\tr, z_{i-N}^\tr),~g_i=g(z_{i}^\tr, z_{i-N}^\tr)$ for short.
For any $T>0$ and $1\leq i\leq [T/\tr]$, one observes from (\ref{s3.6}) that
 \begin{align}\la{s4.1}
\!\!\!(1+  |\breve z_{i}^\tr|^2)^{\frac{q}{2}}
   =&\Big[1+ |z_{i-1}^\tr+f_{i-1}\tr+ g_{i-1}\tr W_{i-1}|^2
  \Big]^{\frac{q}{2}}\nn\\
  =& \left(1+ |z_{i-1}^\tr|^2 \right)^{\frac{q}{2}}
  ( 1+\Gamma_{i-1})^{\frac{q}{2}},
\end{align}
where
\begin{align*}
\Gamma_{i-1} =&\big(1+|z_{i-1}^\tr|^2\big)^{-1}\big[|f_{i-1}|^2\tr^2+|g_{i-1} \tr W_{i-1}|^2+2\langle z_{i-1}^\tr, f_{i-1}\rangle\tr\nn\\
&+2\langle z_{i-1}^\tr,  g_{i-1}\tr W_{i-1}\rangle
+2\langle f_{i-1},g_{i-1} \tr W_{i-1}\rangle\tr\big].
\end{align*}
From (\ref{s4.1}) one  observes that $\Gamma_{i-1}>-1$.
 For  the given constant  $q>0$, choose an integer $k$ such that $2k<q\leq 2(k+1)$. It follows from \cite[Lemma 3.3]{yang2018} and   (\ref{s4.1}) that
\begin{align}\la{s4.3}
 &\E \left[ \left(1+|\breve{z}_{i}^\tr |^2\right)^{\frac{q}{2}}\big|\mathcal{F}_{t_{i-1}}\right]\nn\\
   \leq&  (1 + |z_{i-1}^\tr|^2)^{\frac{q}{2}}\bigg[1 + \frac{q}{2} \E\big(\Gamma_{i-1}\big|\mathcal{F}_{t_{i-1}}\big)\nn\\
     &+ \frac{q(q-2)}{8} \E\big(\Gamma_{i-1}^2\big|\mathcal{F}_{t_{i-1}}\big)
  +  \E\big(\Gamma_{i-1}^3P_k(\Gamma_{i-1})\big|\mathcal{F}_{t_{i-1}}\big)\bigg],
\end{align}
where $P_k(\cdot)$ represents a $k$th-order polynomial  whose coefficients depend only on $q$.
Noticing that the increment $\tr W_{i-1}$ is independent of
$\mathcal{F}_{t_{i-1}}$. One  has for any $A\in\mathbb{R}^{d\times m}$
\begin{align}\la{s4.4}
&\E((A \tr W_{i-1})|\mathcal{F}_{t_{i-1}})=0,
~~~~\E(|A\tr W_{i-1}|^{2}|\mathcal{F}_{t_{i-1}})=|A|^2\tr.
\end{align}
Using  (\ref{s3.4}), (\ref{s3.5}) and (\ref{s4.4}),  we compute
\begin{align}\la{s4.5}
 &\E\big[\Gamma_{i-1}\big|\mathcal{F}_{t_{i-1}}\big]\nn\\
=&\dis {(1+|z_{i-1}^\tr|^2)^{-1 }} \Big(2\langle z_{i-1}^\tr, f_{i-1}\rangle \tr +|g_{i-1}|^2\tr
+|f_{i-1}|^2\tr^2 \Big)\nn\\
\leq &\dis {(1+|z_{i-1}^\tr|^2)^{-1 }}\Big(2\langle z_{i-1}^\tr, f_{i-1}\rangle\tr+|g_{i-1}|^2\tr
+h^2_{\Phi,\mu}(\tr)(1+|z_{i-1}^\tr|)^2\tr^2\Big)\nn\\
\leq & \dis {(1+|z_{i-1}^\tr|^2)^{-1 }}\Big(2\langle z_{i-1}^\tr, f_{i-1}\rangle+|g_{i-1}|^2\Big)\tr+C\tr.
\end{align}
To estimate   $ {q(q-2)}\E\big[\Gamma_{i-1}^2\big|\mathcal{F}_{t_{i-1}}\big]/{8}$, we consider two cases.

Case (I). If $0<q\leq 2$, then $ {q(q-2)}/{8}\leq 0$.
One observes,
 \begin{align}\la{ssg4.4}
&\E((A \tr W_{i-1})^{2j-1}|\mathcal{F}_{t_{i-1}})=0,\nn\\
&\E(|A\tr W_{i-1}|^{j}|\mathcal{F}_{t_{i-1}})\leq C\tr^{\frac{j}{2}}, ~\forall~A\in R^{d\times m},~j\geq2.
\end{align}
Using (\ref{s3.4}), (\ref{s3.5}) and (\ref{ssg4.4}), we have
\begin{align*}
&\E\big[\Gamma_{i-1}^2\big|\mathcal{F}_{t_{i-1}}\big]\nn\\
\geq &\big(1+|z_{i-1}^\tr|^2\big)^{-2}\E\Bigg\{\bigg[|2\langle z_{i-1}^\tr, g_{i-1} \tr W_{i-1}\rangle|^2
+4\langle z_{i-1}^\tr, g_{i-1}\tr W_{i-1}\rangle\nn\\
&\times \Big(|f_{i-1}|^2\tr^2+|g_{i-1} \tr W_{i-1}|^2
+2\langle z_{i-1}^\tr, f_{i-1}\rangle\tr\nn\\
&+2\langle f_{i-1}, g_{i-1} \tr W_{i-1}\rangle\tr \Big)\bigg]\bigg|\mathcal{F}_{t_{i-1}}\Bigg\}\nn\\
\geq&4\big(1+|z_{i-1}^\tr|^2\big)^{-2}\big|\langle z_{i-1}^\tr, g_{i-1}\rangle\big|^2\tr
-8\big(1+|z_{i-1}^\tr|^2\big)^{-2}|z_{i-1}^\tr||f_{i-1}||g_{i-1}|^2\tr^2\nn\\
\geq&4\big(1+|z_{i-1}^\tr|^2\big)^{-2}|\langle z_{i-1}^\tr,g_{i-1}\rangle|^2\tr-32K^2\tr^{2-2\mu}.
\end{align*}

Case (II). If $q>2$, then $\frac{q(q-2)}{8}>0$.  By the similar way as Case (I) we have
 \begin{align*}
&\E\big[\Gamma_{i-1}^2\big|\mathcal{F}_{t_{i-1}}\big]\nn\\
= &\big(1+|z_{i-1}^\tr|^2\big)^{-2}\E\Bigg\{\bigg[|2\langle z_{i-1}^\tr, g_{i-1} \tr W_{i-1}\rangle|^2
+4\langle z_{i-1}^\tr, g_{i-1}\tr W_{i-1}\rangle\nn\\
&\times\Big(|f_{i-1}|^2\tr^2+|g_{i-1} \tr W_{i-1}|^2
+2\langle z_{i-1}^\tr, f_{i-1}\rangle\tr+2\langle f_{i-1}, g_{i-1} \tr W_{i-1}\rangle\tr \Big)\nn\\
&+\Big(|f_{i-1}|^2\tr^2+|g_{i-1} \tr W_{i-1}|^2
+2\langle z_{i-1}^\tr, f_{i-1}\rangle\tr+2\langle f_{i-1}, g_{i-1} \tr W_{i-1}\rangle\tr \Big)^2
\bigg]\bigg|\mathcal{F}_{t_{i-1}}\Bigg\}\nn\\
\leq&4\big(1+|z_{i-1}^\tr|^2\big)^{-2}\big|\langle z_{i-1}^\tr, g_{i-1}\rangle\big|^2\tr
+\big(1+|z_{i-1}^\tr|^2\big)^{-2}\Big[8|z_{i-1}^\tr||f_{i-1}||g_{i-1}|^2\tr^2\nn\\
&+4|f_{i-1}|^4\tr^4+4|g_{i-1}|^4\tr^2+16|z_{i-1}^\tr|^2|f_{i-1}|^2\tr^2
+16|f_{i-1}|^2|g_{i-1}|^2\tr^3)\nn\\
\leq&4\big(1+|z_{i-1}^\tr|^2\big)^{-2}|\langle z_{i-1}^\tr, g_{i-1}\rangle|^2\tr\nn\\
&+\big(1+|z_{i-1}^\tr|^2\big)^{-2}\Big[8h^2_{\Phi,\mu}(\tr)|z_{i-1}^\tr|(1+|z_{i-1}^\tr|)^3\tr^2\nn\\
&+4h^4_{\Phi,\mu}(\tr)(1+|z_{i-1}^\tr|)^4\tr^4+4h^2_{\Phi,\mu}(\tr)(1+|z_{i-1}^\tr|)^4\tr^2\nn\\
&+16h^2_{\Phi,\mu}(\tr)|z_{i-1}^\tr|^2(1+|z_{i-1}^\tr|)^2\tr^2+16h^3_{\Phi,\mu}(\tr)(1+|z_{i-1}^\tr|)^4\tr^3\Big]\nn\\
\leq&4\big(1+|z_{i-1}^\tr|^2\big)^{-2}|\langle z_{i-1}^\tr,g_{i-1}\rangle|^2\tr
+C\tr.
\end{align*}
Combining both cases  implies  that
\begin{align}\label{s4.6}
\frac{q(q-2)}{8}\E\big[\Gamma_{i-1}^2\big|\mathcal{F}_{t_{i-1}}\big]\leq \frac{q(q-2)}{2}\big(1+|z_{i-1}^\tr|^2\big)^{-2}|\langle z_{i-1}^\tr,g_{i-1}\rangle|^2\tr
+C\tr.
\end{align}
To estimate $\E\big(\Gamma_{i-1}^3P_k(\Gamma_{i-1})\big|\mathcal{F}_{t_{i-1}}\big)$,
 we begin with $\E\big(\Gamma^3_{i-1}\big|\mathcal{F}_{t_{i-1}}\big)$.
Using (\ref{s3.4}), (\ref{s3.5}) and (\ref{ssg4.4})  we obtain
\begin{align*}
&\E\left[\Gamma^3_{i-1}\big|\mathcal{F}_{t_{i-1}}\right]\nn\\
=&(1+|z_{i-1}^\tr|^2)^{-3}\E\Big\{\big[(|f_{i-1}|^2\tr^2+|g_{i-1}\tr W_{i-1}|^2+2\langle z_{i-1}^\tr,f_{i-1}\rangle\tr)\nn\\
&+(2\langle z_{i-1}^\tr,g_{i-1}\tr W_{i-1}\rangle+2\langle f_{i-1},g_{i-1}\tr W_{i-1}\rangle\tr)\big]^3\big|\mathcal{F}_{t_{i-1}}
\Big\}\nn\\
=&(1+|z_{i-1}^\tr|^2)^{-3}\E\Big\{\big[(|f_{i-1}|^2\tr^2+|g_{i-1}\tr W_{i-1}|^2+ 2\langle z_{i-1}^\tr,f_{i-1}\rangle\tr)^3\nn\\
&+3(|f_{i-1}|^2\tr^2+|g_{i-1}\tr W_{i-1}|^2+2\langle z_{i-1}^\tr,f_{i-1}\rangle\tr)\nn\\
&\times(2\langle z_{i-1}^\tr,g_{i-1}\tr W_{i-1}\rangle+2\langle f_{i-1},g_{i-1}\tr W_{i-1}\rangle\tr)^2\big]\big|\mathcal{F}_{t_{i-1}}
\Big\}\nn\\
\geq&(1+|z_{i-1}^\tr|^2)^{-3}\E\Big\{\big[-8|z_{i-1}^\tr|^3|f_{i-1}|^3\tr^3-6(|f_{i-1}|^2\tr^2+|g_{i-1}\tr W_{i-1}|^2)^2|z_{i-1}^\tr||f_{i-1}|\tr\nn\\
&-6|z_{i-1}^\tr||f_{i-1}|\tr\times(2\langle z_{i-1}^\tr,g_{i-1}\tr W_{i-1}\rangle+2\langle f_{i-1},g_{i-1}\tr W_{i-1}\rangle\tr)^2\big]\big|\mathcal{F}_{t_{i-1}}
\Big\}\nn\\
\geq&-C(1+|z_{i-1}^\tr|^2)^{-3}(|z_{i-1}^\tr|^3|f_{i-1}|^3\tr^3+|z_{i-1}^\tr||f_{i-1}|^5\tr^5+|z_{i-1}^\tr||f_{i-1}||g_{i-1}|^4\tr^3\nn\\
&+|z_{i-1}^\tr|^3|f_{i-1}||g_{i-1}|^2\tr^2+|z_{i-1}^\tr||f_{i-1}|^3|g_{i-1}|^2\tr^4)\nn\\
\geq&-C(1+|z_{i-1}^\tr|^2)^{-3}\Big[h^3_{\Phi,\mu}(\tr)|z_{i-1}^\tr|^3(1+|z_{i-1}^\tr|)^3\tr^3+h^5_{\Phi,\mu}(\tr)|z_{i-1}^\tr|(1+|z_{i-1}^\tr|)^5\tr^5\nn\\
&+h^3_{\Phi,\mu}(\tr)|z_{i-1}^\tr|(1+|z_{i-1}^\tr|)^5\tr^3+h^2_{\Phi,\mu}(\tr)|z_{i-1}^\tr|^3(1+|z_{i-1}^\tr|)^3\tr^2\nn\\
&+h^4_{\Phi,\mu}(\tr)|z_{i-1}^\tr|(1+|z_{i-1}^\tr|)^5\tr^4\Big]\nn\\
\geq&-C(\tr^{3-3\mu}+\tr^{5-5\mu}+\tr^{3-3\mu}+\tr^{2-2\mu}+\tr^{4-4\mu})\geq-C\tr.
\end{align*}
On the other hand,
\begin{align*}
 &\E\left[\Gamma^3_{i-1}\big|\mathcal{F}_{t_{i-1}}\right]\nn\\
\leq& (1+|z_{i-1}^\tr|^2)^{-3}\Big[9(|f_{i-1}|^6\tr^6+|g_{i-1}|^6\tr^3+8|z_{i-1}^\tr|^3|f_{i-1}|^3\tr^3)\nn\\
&+24(|z_{i-1}^\tr|^2|f_{i-1}|^2|g_{i-1}|^2\tr^3+|f_{i-1}|^4|g_{i-1}|^2\tr^5+|z_{i-1}^\tr|^2|g_{i-1}|^4\tr^2\nn\\
&+|f_{i-1}|^2|g_{i-1}|^4\tr^4+2|z_{i-1}^\tr|^3|f_{i-1}||g_{i-1}|^2\tr^2+2|z_{i-1}^\tr||f_{i-1}|^3|g_{i-1}|^2\tr^4)\Big]\nn\\
\leq& C(1+|z_{i-1}^\tr|^2)^{-3}\Big[h^6_{\Phi,\mu}(\tr)(1+|z_{i-1}^\tr|)^6\tr^6+h^3_{\Phi,\mu}(\tr)(1+|z_{i-1}^\tr|)^6\tr^3\nn\\
&+h^3_{\Phi,\mu}(\tr)|z_{i-1}^\tr|^3(1+|z_{i-1}^\tr|)^3\tr^3+h^3_{\Phi,\mu}(\tr)|z_{i-1}^\tr|^2(1+|z_{i-1}^\tr|)^4\tr^3\nn\\
&+h^5_{\Phi,\mu}(\tr)(1+|z_{i-1}^\tr|)^6\tr^5+h^2_{\Phi,\mu}(\tr)|z_{i-1}^\tr|^2(1+|z_{i-1}^\tr|)^4\tr^2\nn\\
&+h^4_{\Phi,\mu}(\tr)(1+|z_{i-1}^\tr|)^6\tr^4+h^2_{\Phi,\mu}(\tr)|z_{i-1}^\tr|^3(1+|z_{i-1}^\tr|)^3\tr^2\nn\\
&+h^4_{\Phi,\mu}(\tr)|z_{i-1}^\tr|(1+|z_{i-1}^\tr|)^5\tr^4\Big]\leq C\tr.
\end{align*}
Thus, both of the above inequality imply $ \E\left[ a_0\Gamma^3_{i-1} \big|\mathcal{F}_{t_{i-1}}\right]\leq C\tr $ for any constant $a_0$, where $a_j$ represents the coefficient of $x^j$ term in polynomial $P_k(x)$.
We can also show that for any $j>3$
\begin{align*}
\E\left[|a_{j-3}\Gamma^j_{i-1}|\big|\mathcal{F}_{t_{i-1}}\right]
\leq&C(1+|z_{i-1}^\tr|^2)^{-j}\big(|f_{i-1}|^{2j}\tr^{2j}+|g_{i-1}|^{2j}\tr^j+|z_{i-1}^\tr|^j|f_{i-1}|^j\tr^j\nn\\
&+|z_{i-1}^\tr|^j|g_{i-1}|^j\tr^{\frac{j}{2}}+|f_{i-1}|^j|g_{i-1}|^j\tr^{\frac{3j}{2}})\nn\\
\leq&C\tr.
\end{align*}
These implies
\begin{align}\label{s4.7}
\E\big(\Gamma_{i-1}^3P_k(\Gamma_{i-1})\big|\mathcal{F}_{t_{i-1}}\big)\leq C\tr.
\end{align}
Subsituting (\ref{s4.5}), \eqref{s4.6}, (\ref{s4.7}) into (\ref{s4.3}) and using (H2), we  obtain
\begin{align}\la{Y_7}
&\E\Big[ (1+|\breve{z}_{i}^\tr |^2)^{\frac{q}{2}}\big|\mathcal{F}_{t_{i-1}}\Big]\nn\\
\leq  & \left(1+|z_{i-1}^\tr|^2\right)^{\frac{q}{2}}  \Big\{1+C\tr\nn\\
& +  \frac{q\tr}{2}\frac{(1+|z_{i-1}^\tr|^2)(2\langle z_{i-1}^\tr,f_{i-1}\rangle
 +|g_{i-1}|^2)
 +(q-2)|\langle z_{i-1}^\tr,g_{i-1}\rangle|^2}{(1+|z_{i-1}^\tr|^2)^{2}}  \Big\}\nn\\
 \leq  & \left(1
 +C\tr\right)\left(1+|z_{i-1}^\tr|^2\right)^{\frac{q}{2}}
 +\frac{q\tr}{2}(1+|z_{i-1}^\tr|^2)^{\frac{q}{2}-1}
 \big(2\langle z_{i-1}^\tr,f_{i-1}\rangle+((q-1)\vee 1)|g_{i-1}|^2\big)\nn\\
 \leq &  \left(1
 +C\tr\right)\left(1+|z_{i-1}^\tr|^2\right)^{\frac{q}{2}} +\frac{q}{2}\tr
 \Big[K_1\big(1+|z_{i-1}^\tr|^q+|z_{i-1-N}^\tr|^q\big)\nn\\
& -K_2\big(V_1(z_{i-1}^\tr)-V_1(z_{i-1-N}^\tr)\big)\Big]\nn\\
 \leq& \left(1
 +C\tr\right)\left(1+|z_{i-1}^\tr|^2\right)^{\frac{q}{2}}  +\frac{q}{2} K_1\tr 
\big(1+|z_{i-1-N}^\tr|^2\big)^{\frac{q}{2}}\nn\\
 &-\frac{q}{2}K_2\tr V_1(z_{i-1}^\tr)
 +\frac{q}{2} K_2\tr V_1(z_{i-1-N}^\tr).
\end{align}
Taking expectations on both sides of (\ref{Y_7}) yields
\begin{align}\la{s6}
&\E\big[(1+|z_{i}^\tr|^2)^{\frac{q}{2}}\big]-\E(1+|z_{i-1}^\tr|^2)^{\frac{q}{2}}\nn\\
\leq&\E\left\{\E\big[(1+|\breve{z}_{i}^\tr |^2)^\frac{q}{2}\big|\mathcal{F}_{t_{i-1}}\big]\right\}-\E(1+|z_{i-1}^\tr|^2)^{\frac{q}{2}}\nn\\
\leq& C\tr\E(1+|z_{i-1}^\tr|^2)^{\frac{q}{2}} +\frac{q}{2}K_1\tr
 \E\big(1+|z_{i-1-N}^\tr|^2\big)^{\frac{q}{2}}\nn\\
 &-\frac{q}{2}K_2\tr \E V_1(z_{i-1}^\tr)
+\frac{q}{2}K_2\tr \E V_1(z_{i-1-N}^\tr),
\end{align}
which implies
\begin{align*}
&\E\left[(1+|z_{i}^\tr|^2)^\frac{q}{2}\right]\nn\\
\leq&(1+|\xi(0)|^2)^{\frac{q}{2}}+C\tr\sum_{k=0}^{i-1}\E(1+|z_{k}^{\tr}|^2)^\frac{q}{2}
+\frac{q}{2}K_1\tr \sum_{k=0}^{i-1}\E(1+|z_{k-N}^{\tr}|^2)^\frac{q}{2}\nn\\
&-\frac{q}{2}K_2\tr \sum_{k=0}^{i-1}\E V_1(z_{k}^{\tr})
+\frac{q}{2}K_2\tr \sum_{k=0}^{i-1}\E V_1(z_{k-N}^{\tr})\nn\\
\leq&(1+|\xi(0)|^2)^{\frac{q}{2}}+C\tr\sum_{k=0}^{i-1}\E(1+|z_{k}^{\tr}|^2)^\frac{q}{2}
+\frac{Nq}{2}K_1\tr (1+\|\xi\|^2)^\frac{q}{2}\nn\\
&+\frac{q}{2}K_1\tr \sum_{k=0}^{(i-1-N)\vee 0}\E(1+|z_{k}^{\tr}|^2)^\frac{q}{2}
-\frac{q}{2}K_2\tr \sum_{k=0}^{i-1}\E V_1(z_{k}^{\tr})\nn\\
&+
\frac{Nq}{2}K_2\tr \max_{-N\leq j\leq0}V_1(\xi(t_j))
+\frac{q}{2}K_2\tr \sum_{k=0}^{(i-1-N)\vee 0}\E V_1(z_{k}^{\tr})\nn\\
\leq&C\tr\sum_{k=0}^{i-1}\E(1+|z_{k}^{\tr}|^2)^\frac{q}{2}+C_1,
\end{align*}
where $C_1:=(1+|\xi(0)|^2)^{\frac{q}{2}}+\frac{q\tau}{2} K_1(1+\|\xi\|^2)^\frac{q}{2}+
\frac{q\tau}{2} K_2\max_{-N\leq j\leq0}V_1(\xi(t_j))$.
Applying the discrete Gronwall inequatity and the fact $i\tr\leq T$ yield
\begin{align*}
&\E\left[\left(1+|z_i^{\tr}|^2\right)^\frac{q}{2}\right]\leq C_1e^{Ci\tr }\leq C_1e^{CT}\leq C.
\end{align*}
\end{proof}
\subsection{The strong convergence}\la{SR}
This section concerns the strong convergence of the TEM scheme. We begin with a probability estimation.
\begin{lemma}\la{L:C_1}
Assume that  $(\textup{H}1)$-$(\textup{H}3)$ hold.  For any    $\tr,~ \tr_1\in(0, 1]$, let
\begin{align}\la{3.18}
\varrho_{\tr_1}^\tr:=\inf\{t\geq 0:|\breve{z}_{\tr}(t)|\geq  \Phi^{-1}(h_{\Phi,\mu}(\tr_1))\}.
\end{align}
Then  for any   $T>0$   and  $\tr\in(0,\tr_1]\subseteq (0, 1]$,
\begin{align}\la{s4.18}
 \PP {\{\varrho_{\tr_1}^\tr \leq T\}}\leq \frac {C}{ \big( {\Phi}^{-1}(h_{\Phi,\mu}(\tr_1))\big)^q}.
\end{align}
  \end{lemma}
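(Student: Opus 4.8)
The plan is to reproduce, at the discrete level, the final two steps of the proof of \thmref{th1}: a \emph{stopped} $q$th moment bound followed by a Chebyshev-type inequality. Write $R:=\Phi^{-1}(h_{\Phi,\mu}(\tr_1))$. Since $\breve z_{\tr}(\cdot)$ is constant on each $[t_i,t_{i+1})$, the exit time is attained on the grid, $\varrho_{\tr_1}^\tr=t_{\kappa}$ with $\kappa:=\min\{i\ge0:|\breve z_{i}^\tr|\ge R\}$, so that $\{\varrho_{\tr_1}^\tr\le T\}=\{\kappa\le[T/\tr]\}$. The assumption $\tr\le\tr_1$ enters only through $h_{\Phi,\mu}(\tr)=K\tr^{-\mu}\ge K\tr_1^{-\mu}=h_{\Phi,\mu}(\tr_1)$, hence $\Phi^{-1}(h_{\Phi,\mu}(\tr))\ge R$; thus whenever $i<\kappa$ one has $|\breve z_{i}^\tr|<R$, below the truncation threshold of $\Upsilon_{\Phi,\mu}^{\tr}$, and therefore $z_{i}^\tr=\breve z_{i}^\tr$. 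This identity before the exit time is exactly what lets me feed the one-step estimate \eqref{Y_7}, phrased in terms of $z_{i-1}^\tr$, into a recursion for the stopped process $\breve z_{i\wedge\kappa}^\tr$.

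First I would derive the stopped recursion. Because $\{\kappa\ge i\}\in\mathcal F_{t_{i-1}}$, and the increment $(1+|\breve z_{i\wedge\kappa}^\tr|^2)^{q/2}-(1+|\breve z_{(i-1)\wedge\kappa}^\tr|^2)^{q/2}$ vanishes on $\{\kappa\le i-1\}$ while equaling $(1+|\breve z_{i}^\tr|^2)^{q/2}-(1+|z_{i-1}^\tr|^2)^{q/2}$ on $\{\kappa\ge i\}$, multiplying \eqref{Y_7} by $\boldsymbol{1}_{\{\kappa\ge i\}}$ and taking expectations gives
\begin{align*}
\E\big(1+|\breve z_{i\wedge\kappa}^\tr|^2\big)^{\frac q2}-\E\big(1+|\breve z_{(i-1)\wedge\kappa}^\tr|^2\big)^{\frac q2}
\le{}& C\tr\,\E\big(1+|\breve z_{(i-1)\wedge\kappa}^\tr|^2\big)^{\frac q2}
+\tfrac{q}{2}K_1\tr\,\E\big[\boldsymbol{1}_{\{\kappa\ge i\}}\big(1+|z_{i-1-N}^\tr|^2\big)^{\frac q2}\big]\\
&-\tfrac q2K_2\tr\,\E\big[\boldsymbol{1}_{\{\kappa\ge i\}}V_1(z_{i-1}^\tr)\big]
+\tfrac q2K_2\tr\,\E\big[\boldsymbol{1}_{\{\kappa\ge i\}}V_1(z_{i-1-N}^\tr)\big].
\end{align*}
Summing over $i$ and reindexing the two delayed sums by $m=k-N$, the crucial structural point is that the stopping indicator is nonincreasing in its index, $\boldsymbol{1}_{\{\kappa\ge m+N+1\}}\le\boldsymbol{1}_{\{\kappa\ge m+1\}}$. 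This lets the delayed $V_1$-sum be dominated by the undelayed one and cancel the negative $V_1$-term, leaving only the bounded initial contribution $\tfrac{Nq}{2}K_2\tr\max_{-N\le j\le0}V_1(\xi(t_j))$, exactly as in the proof of \thmref{th3}; the same monotonicity together with $z_m^\tr=\breve z_{m\wedge\kappa}^\tr$ on $\{\kappa\ge m+1\}$ turns the delayed $K_1$-sum into $\tfrac q2K_1\tr\sum_m\E(1+|\breve z_{m\wedge\kappa}^\tr|^2)^{q/2}$ plus a bounded initial term.

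After this bookkeeping the recursion closes on itself,
\[
\E\big(1+|\breve z_{i\wedge\kappa}^\tr|^2\big)^{\frac q2}\le C_1+C\tr\sum_{k=0}^{i-1}\E\big(1+|\breve z_{k\wedge\kappa}^\tr|^2\big)^{\frac q2},
\]
so the discrete Gronwall inequality delivers $\sup_{0\le i\tr\le T}\E(1+|\breve z_{i\wedge\kappa}^\tr|^2)^{q/2}\le C$ uniformly in $\tr\in(0,\tr_1]$. Finally, on $\{\kappa\le[T/\tr]\}$ one has $|\breve z_{\kappa}^\tr|\ge R$ and $\kappa\wedge[T/\tr]=\kappa$, whence
\[
R^{q}\,\PP\{\varrho_{\tr_1}^\tr\le T\}\le\E\big[\big(1+|\breve z_{\kappa\wedge[T/\tr]}^\tr|^2\big)^{\frac q2}\boldsymbol{1}_{\{\kappa\le[T/\tr]\}}\big]\le\E\big(1+|\breve z_{[T/\tr]\wedge\kappa}^\tr|^2\big)^{\frac q2}\le C,
\]
which is \eqref{s4.18}. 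I expect the only real difficulty to be the careful bookkeeping of the delayed $K_1$- and $V_1$-terms under stopping; the crux is that the monotone stopping indicator is compatible with the shift-by-$N$ reindexing, so the cancellations already exploited in \thmref{th3} survive, and the pre-exit identity $z_{i}^\tr=\breve z_{i\wedge\kappa}^\tr$ keeps the whole estimate in terms of the stopped process alone.
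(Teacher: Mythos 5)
Your proof is correct and follows essentially the same route as the paper's: a stopped $q$th-moment bound for $\breve z$ built from the one-step estimate, with the delayed $V_1$-terms cancelled via the monotonicity of the stopping indicator under the shift-by-$N$ reindexing (the paper's $\textbf 1_{[[0,\zeta_{\tr_1}^\tr]]}(k+1+N)\leq \textbf 1_{[[0,\zeta_{\tr_1}^\tr]]}(k+1)$), followed by the discrete Gronwall inequality and a Chebyshev argument, with $\tr\leq\tr_1$ entering exactly where you say it does. The only (technical, not conceptual) difference is that you multiply \eqref{Y_7} by the predictable indicator $\boldsymbol{1}_{\{\kappa\geq i\}}$ and condition on $\mathcal{F}_{t_{i-1}}$, whereas the paper re-derives the one-step estimate for the stopped process by conditioning on $\mathcal{F}_{t_{(i-1)\wedge\zeta_{\tr_1}^\tr}}$ and invoking the Doob martingale stopping-time theorem; both mechanisms produce the identical recursion.
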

\begin{proof}
\textbf{Proof.} Set $\zeta_{\tr_1}^\tr:=\inf\{i\geq 0:|\breve{z}_{i}^\tr |\geq  \Phi^{-1}(h(\tr_1))\}$.
Define
$$\breve{f}_i=f(\breve{z}_{i}^\tr , \breve{z}_{i-N}^\tr ),~~\breve{g}_i=g(\breve{z}_{i}^\tr , \breve{z}_{i-N}^\tr ),$$
 and
 $$\breve f_{i\wedge \zeta_{\tr_1}^\tr}:=f(\breve z_{i\wedge\zeta_{\tr_1}^\tr}^\tr, \breve z_{(i-N)\wedge\zeta_{\tr_1}^\tr}^\tr),
  ~~\breve g_{i\wedge\zeta_{\tr_1}^\tr}:=g(\breve z_{i\wedge\zeta_{\tr_1}^\tr}^\tr, \breve z_{(i-N)\wedge\zeta_{\tr_1}^\tr}^\tr).$$
For any $i\geq 1$, if $\omega\in \{\zeta_{\tr_1}^\tr\geq i\}$,
 it is obvious that $z_{i-1}^\tr=\breve z_{i-1}^\tr$, $z_{i-1-N}^{\tr}=\breve z_{i-1-N}^{\tr}$ and
\begin{align*}
\breve z_{i\wedge\zeta_{\tr_1}^\tr}^\tr=\breve z_{i}^\tr=&\breve z_{i-1}^\tr+\breve f_{i-1}\tr+\breve  g_{i-1}\tr W_{i-1}.
\end{align*}
Otherwise, $\omega\in \{\zeta_{\tr_1}^\tr< i\}$, we can then write
\begin{align*}
\breve z_{i\wedge\zeta_{\tr_1}^\tr}^\tr=\breve z_{\zeta_{\tr_1}^\tr}^\tr=\breve z_{(i-1)\wedge\zeta_{\tr_1}^\tr}^\tr.
\end{align*}
Combining both cases we have
\begin{align*} 
 \br z_{i\wedge\zeta_{\tr_1}^\tr}^\tr
  =\br z_{(i-1)\wedge \zeta_{\tr_1}^\tr}^\tr+\left[\br{f}_{(i-1)\wedge\zeta_{\tr_1}^\tr}\tr
  +\br{g}_{(i-1)\wedge\zeta_{\tr_1}^\tr} \tr W_{i-1}\right]\textbf 1_{[[0, \zeta_{\tr_1}^\tr]]}(i).
\end{align*}
Then,
\begin{align*}
 \left(1+|\br z_{i\wedge\zeta_{\tr_1}^\tr}^\tr|^2 \right)^{\frac{q}{2}}
  = \left(1+|\br z_{(i-1)\wedge\zeta_{\tr_1}^\tr}^\tr|^2\right)^{\frac{q}{2}}
  \left(1+\br{\Gamma}_{(i-1)\wedge\zeta_{\tr_1}^\tr }  \textbf 1_{[[0, \zeta_{\tr_1}^\tr]]}(i)\right)^{\frac{q}{2}},
\end{align*}
where
\begin{align*}
\br{\Gamma}_{(i-1)\wedge\zeta_{\tr_1}^\tr}
=&\big(1+|\br z_{(i-1)\wedge\zeta_{\tr_1}^\tr}^\tr|^2\big)^{-1}
\big[|\br{f}_{(i-1)\wedge\zeta_{\tr_1}^\tr}|^2\tr^2
+|\br{g}_{(i-1)\wedge\zeta_{\tr_1}^\tr} \tr W_{i-1}|^2\nn\\
&+2\langle \br z_{(i-1)\wedge\zeta_{\tr_1}^\tr}^\tr, \br f_{(i-1)\wedge\zeta_{\tr_1}^\tr}\rangle \tr+2\langle \br z_{(i-1)\wedge\zeta_{\tr_1}^\tr}^\tr,
\br{g}_{(i-1)\wedge\zeta_{\tr_1}^\tr}\tr W_{i-1}\rangle\nn\\
&+2\langle\br{f}_{(i-1)\wedge\zeta_{\tr_1}^\tr},\br{g}_{(i-1)\wedge\zeta_{\tr_1}^\tr} \tr W_{i-1}\rangle\tr\big].
\end{align*}
Similar, we obtain that $1\leq i\leq [T/\tr]$
\begin{align}\la{ss4.24}
&  \E  \left[(1+|\br z_{i\wedge\zeta_{\tr_1}^\tr}^\tr|^2)^{\frac{q}{2}}\big|
\mathcal F_{t_{(i-1)\wedge\zeta_{\tr_1}^\tr}}\right] \nn\\
   \leq & (1 + |\br z_{(i-1)\wedge\zeta_{\tr_1}^\tr}^\tr|^2)^{\frac{q}{2}}
   \big[1 + \frac{q}{2} \E(\br{\Gamma}_{(i-1)\wedge\zeta_{\tr_1}^\tr}
   \textbf 1_{[[0, \zeta_{\tr_1}^\tr]]}(i)|\mathcal{F}_{t_{i-1}\wedge\zeta_{\tr_1}^\tr})\nn\\
&     + \frac{q(q-2)}{8} \E(\br{\Gamma}_{(i-1)\wedge\zeta_{\tr_1}^\tr}^2
     \textbf 1_{[[0, \zeta_{\tr_1}^\tr]]}(i)|\mathcal F_{t_{i-1}\wedge\zeta_{\tr_1}^\tr})\nn\\
  &+ \E(\br{\Gamma}_{(i-1)\wedge\zeta_{\tr_1}^\tr}^3P_{k}(\br{\Gamma}_{(i-1)\wedge\zeta_{\tr_1}^\tr})
  \textbf 1_{[[0, \zeta_{\tr_1}^\tr]]}(i)|\mathcal{F}_{t_{i-1}\wedge\zeta_{\tr_1}^\tr})\big].
\end{align}
By virtue of the  martingale property of  $W(t)$ and the Doob martingale stopping
 time theorem \cite[p.11, Theorem 3.3]{Mao2007},  we have
\begin{align}\label{ss4.21}
&\E\left((A \tr W_i)\textbf 1_{[[0,\zeta_{\tr_1}^\tr]]}(i)\big|
\mathcal{F}_{t_{(i-1)\wedge\zeta_{\tr_1}^\tr}}\right)=0,\nn\\
&\E\left(|A\tr W_i|^{2}\textbf 1_{[[0,\zeta_{\tr_1}^\tr]]}(i)\big|
\mathcal{F}_{t_{(i-1)\wedge\zeta_{\tr_1}^\tr}}\right)
=|A|^2\tr\E\left(\textbf 1_{[[0,\zeta_{\tr_1}^\tr]]}(i)\big|
\mathcal{F}_{t_{(i-1)\wedge\zeta_{\tr_1}^\tr}}\right),
\end{align}
and
\begin{align}\la{ssg4.21}
&\E\left((A \tr W_i)^{2j-1}\textbf 1_{[[0,\zeta_{\tr_1}^\tr]]}(i)\big|
\mathcal{F}_{t_{(i-1)\wedge\zeta_{\tr_1}^\tr}}\right)=0,\nn\\
&\E\left(|A\tr W_i|^{j}\textbf 1_{[[0,\zeta_{\tr_1}^\tr]]}(i)\big|
\mathcal{F}_{t_{(i-1)\wedge\zeta_{\tr_1}^\tr}}\right)
\leq C\tr^{\frac{j}{2}}\E\left(\textbf 1_{[[0,\zeta_{\tr_1}^\tr]]}(i)\big|
\mathcal{F}_{t_{(i-1)\wedge\zeta_{\tr_1}^\tr}}\right),
\end{align}
where $A\in\mathbb{R}^{d\times m}, ~j\geq 2$.
Using these and (H2), by the same way as Theorem \ref{th3} we yield
\begin{align}\la{s4.28}
&\E\Big[ (1+|\br z_{i\wedge\zeta_{\tr_1}^\tr}^\tr|^2)^{\frac{q}{2}}\big|
\mathcal{F}_{t_{(i-1)\wedge\zeta_{\tr_1}^\tr}}\Big]\nn\\
 \leq&  (1
 +C\tr)
 \left(1+|\br z_{i\wedge\zeta_{\tr_1}^\tr}^\tr|^2\right)^{\frac{q}{2}} +\frac{q}{2}K_1\tr \big(1+ |\br z_{(i-1-N)\wedge\zeta_{\tr_1}^\tr}^{\tr}|^2\big)^{\frac{q}{2}}\nn\\
& -\frac{q}{2}K_2\tr  V_1(\br z_{(i-1)\wedge\zeta_{\tr_1}^\tr}^\tr) \E\left(\textbf 1 _{[[0,\zeta_{\tr_1}^\tr]]}(i)|\mathcal{F}_{t_{(i-1)\wedge\zeta_{\tr_1}^\tr}}\right)\nn\\
& +\frac{q}{2}K_2\tr V_1(\br z_{(i-1-N)\wedge\zeta_{\tr_1}^\tr}^{\tr}) \E\left(\textbf 1 _{[[0, \zeta_{\tr_1}^\tr]]}(i)|\mathcal{F}_{t_{(i-1)\wedge\zeta_{\tr_1}^\tr}}\right),
\end{align}
 which implies
\begin{align*}
&\E\left[\left(1+|\br z_{i\wedge\zeta_{\tr_1}^\tr}^\tr|^2\right)^\frac{q}{2}\right]\nn\\
\leq&(1+|\xi(0)|^2)^{\frac{q}{2}}+C\tr \sum_{k=0}^{i-1}
\E\left(1+|\br z_{k\wedge\zeta_{\tr_1}^\tr}^{\tr}|^2\right)^\frac{q}{2}\nn\\
&+\frac{Nq}{2}K_1\tr \left(1+\|\xi\|^2\right)^\frac{q}{2}+\frac{q}{2}K_1\tr \sum_{k=0}^{(i-1-N)\vee 0}
\E\left(1+|\br z_{k\wedge\zeta_{\tr_1}^\tr}^{\tr}|^2\right)^\frac{q}{2}\nn\\
&-\frac{q}{2}K_2\tr \sum_{k=0}^{i-1}
\E\bigg[V_1(\br z_{k\wedge\zeta_{\tr_1}^\tr}^{\tr})\E\left(\textbf 1_{[[0,\zeta_{\tr_1}^\tr]]}(k+1)|
\mathcal{F}_{t_{k\wedge\zeta_{\tr_1}^\tr}}\right)\bigg]\nn\\
&+\frac{Nq}{2}K_2\tr \max_{-N\leq j\leq0}V_1(\xi(t_j))\nn\\
&+\frac{q}{2}K_2\tr \sum_{k=0}^{(i-1-N)\vee 0}
\E\bigg[V_1(\br z_{k \wedge\zeta_{\tr_1}^\tr}^{\tr})
\E\left(\textbf 1_{[[0,\zeta_{\tr_1}^\tr]]}(k+1+N)|\mathcal{F}_{t_{(k+N)\wedge\zeta_{\tr_1}^\tr}}\right)\bigg].
\end{align*}
Due to the fact $\textbf 1_{[[0,\zeta_{\tr_1}^\tr]]}(k+1+N)\leq \textbf 1_{[[0,\zeta_{\tr_1}^\tr]]}(k+1)$,
one observes
\begin{align}\la{ss4.27}
\E\left[\left(1+|\br z_{i\wedge\zeta_{\tr_1}^\tr}^\tr|^2\right)^\frac{q}{2}\right]
\leq&C\tr \sum_{k=0}^{i-1}
\E\left(1+|\br z_{k\wedge\zeta_{\tr_1}^\tr}^{\tr}|^2\right)^\frac{q}{2}+C_1,
\end{align}
 where $C_1:=(1+|\xi(0)|^2)^{\frac{q}{2}}+\frac{q\tau}{2} K_1\left(1+\|\xi\|^2\right)^\frac{q}{2}+\frac{q\tau}{2} K_2\max_{-N\leq j\leq0}V_1(\xi(t_j))$.
Applying the discrete Gronwall inequality  together with $i\tr\leq T$ implies
\begin{align*}
\E\left[\left(1+|\br z_{i\wedge\zeta_{\tr_1}^\tr}^\tr|^2\right)^\frac{q}{2}\right]
\leq C_1e^{Ci\tr }\leq C_1e^{CT}.
\end{align*}
Therefore the required assertion follows from
\begin{align*}
    \big( \Phi^{-1}(h_{\Phi,\mu}(\tr_1))\big)^q  \PP {\{\varrho_{\tr_1}^\tr \leq T\}}
    \leq\E\big[|\br z(T\wedge \varrho_{\tr_1}^\tr) |^q\big]
    \leq& \E\left(1+|\br z_{ [\frac{T}{\tr}]\wedge\zeta_{\tr_1}^\tr}^{\tr}|^2\right)^\frac{q}{2}\leq C.
\end{align*}
\end{proof}

Now we establish the $q$th moment convergence of the TEM scheme \eqref{s3.7} for $q>0$.
\begin{theorem}\la{th4}
Assume that $(\textup{H}1)$-$(\textup{H}3)$ hold. Then for any $p\in (0, q)$,
\be \la{s3.10}
\lim_{\tr\rightarrow 0^+} \E |x(T)-z_{\tr}(T)|^p=0,~~~~\forall ~T> 0.
\ee
\end{theorem}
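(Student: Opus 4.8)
The plan is to prove the $q$th-moment convergence by a standard truncation-and-localization argument: control the error on the event where both the exact solution and the numerical solution stay bounded, and control the low-probability event where either escapes, using the moment bounds from Theorems~\ref{th1} and~\ref{th3} together with the probability estimate of Lemma~\ref{L:C_1}.

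\textbf{Step 1 (Localization setup).} Fix $T>0$ and $p\in(0,q)$. For a large threshold $R>\|\xi\|$, introduce the two stopping times $\vartheta_R$ from \eqref{sM_1} for the exact solution and $\varrho_{\tr_1}^\tr$ from \eqref{3.18} for the continuous-time numerical solution, choosing $\tr_1$ so that $\Phi^{-1}(h_{\Phi,\mu}(\tr_1))\geq R$. Define the joint stopping time $\theta_R=\vartheta_R\wedge\varrho_{\tr_1}^\tr$ and split
\begin{align*}
\E|x(T)-z_{\tr}(T)|^p
=&\,\E\Big[|x(T)-z_{\tr}(T)|^p\boldsymbol{1}_{\{\theta_R>T\}}\Big]
+\E\Big[|x(T)-z_{\tr}(T)|^p\boldsymbol{1}_{\{\theta_R\leq T\}}\Big].
\end{align*}

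\textbf{Step 2 (The tail term).} For the second term I would apply Young's inequality in the form $a^p b\leq \frac{p}{q}\e a^q+\frac{q-p}{q}\e^{-p/(q-p)}b$ (or Hölder with exponents $q/p$ and $q/(q-p)$) to separate the $L^q$-bounded quantity $|x(T)-z_{\tr}(T)|$ from the small-probability indicator. The moment bounds \eqref{s2.5} and \eqref{Y_1} give $\E|x(T)-z_{\tr}(T)|^q\leq C$ uniformly, while Theorem~\ref{th1} and Lemma~\ref{L:C_1} bound $\PP\{\theta_R\leq T\}\leq \PP\{\vartheta_R\leq T\}+\PP\{\varrho_{\tr_1}^\tr\leq T\}\leq C/R^q$. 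Hence the tail term is bounded by $\e\,C+C(\e)/R^{q(q-p)/q}$, which can be made arbitrarily small by first taking $R$ large (after fixing $\e$ small), uniformly in $\tr$.

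\textbf{Step 3 (The truncated error) — the main obstacle.} The heart of the proof is showing that, with $R$ (equivalently $\tr_1$) now fixed, the first term tends to $0$ as $\tr\to0^+$. On $\{\theta_R>T\}$ the numerical coefficients are not truncated, so one compares the stopped processes $x(t\wedge\theta_R)$ and $z_{\tr}(t\wedge\theta_R)$, which both live in the ball of radius $R$. Here hypothesis (H1) supplies the local Lipschitz estimate in the first spatial argument on $\{|x|\vee|\bar x|\vee|y|\leq R\}$, but the delay argument is only uniformly continuous by (H3), not Lipschitz; this is where the real work lies. I expect to introduce the piecewise-constant delayed argument $z_{\tr}(t-\tau)$ versus $x(t-\tau)$ and split $f$- and $g$-differences into a first-argument part (handled by (H1) and a Gronwall estimate) and a second-argument part (handled by (H3) together with the numerical Hölder-type continuity of $z_{\tr}(t)$ in $t$). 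One applies It\^o's formula to $|x(t\wedge\theta_R)-z_{\tr}(t\wedge\theta_R)|^{p\vee2}$, uses the Burkholder--Davis--Gundy inequality on the stochastic integral, absorbs the (H1)-controlled terms into a Gronwall argument, and shows the (H3)-controlled remainder vanishes as $\tr\to0$ by dominated convergence (the mesh displacement $|z_{\tr}(t)-z_{\tr}(t_i)|$ being small in probability on the bounded event). Combining Steps~2 and~3 and letting $\tr\to0^+$ first and then $R\to\infty$ yields \eqref{s3.10}.
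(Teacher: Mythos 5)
Your Steps 1 and 2 coincide with the paper's own proof: the same pair of stopping times $\vartheta_M$ and $\varrho_{\tr_1}^{\tr}$, the same Young-inequality splitting as in \eqref{s4.31}, and the same use of Theorem~\ref{th1}, Theorem~\ref{th3} and Lemma~\ref{L:C_1} to make the two tail terms small (as in \eqref{ss4.32}--\eqref{s4.33}). The divergence is in Step 3, and there your sketch has a genuine gap. The paper never performs a direct error analysis on the localized event: it introduces the globally truncated coefficients $f_M,g_M$, which by (H1) are globally Lipschitz in the first argument \eqref{S4.3} and by (H3) of linear growth \eqref{S4.4}, considers the auxiliary SDDE \eqref{F_5} with exact solution $u$ and plain EM approximation $Y_{\tr}$, and then uses the pathwise identifications $x(t\wedge\vartheta_M)=u(t\wedge\vartheta_M)$ in \eqref{F_6} and $z_{\tr}(t\wedge\theta_M^{\tr_1})=\breve z_{\tr}(t\wedge\theta_M^{\tr_1})=Y_{\tr}(t\wedge\theta_M^{\tr_1})$ in \eqref{F_8}. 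The convergence on $\{\theta_M^{\tr_1}>T\}$ then follows by citing the known strong convergence theorem of Kumar and Sabanis for such equations, i.e.\ \eqref{s4.38}; no It\^{o} formula, BDG inequality or Gronwall argument is needed.

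Your proposed replacement for this step runs into two problems. The minor one: $z_{\tr}(t)$ is piecewise constant, so It\^{o}'s formula cannot be applied to $|x(t)-z_{\tr}(t)|^{p\vee 2}$ directly; you would need a continuous interpolant such as \eqref{s4.41} and a separate estimate of the interpolation gap (harmless on the localized event, where the coefficients are bounded). The serious one concerns the delay term. Hypothesis (H3) supplies only a modulus of continuity $\omega(\cdot)$ in the second argument, not a Lipschitz bound, so the second-argument part of the $f$- and $g$-differences produces terms of the form $\E\big[\omega\big(|x(s-\tau)-z_{\tr}(s-\tau)|\big)^{2}\big]$, which cannot be absorbed into a Gronwall estimate. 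For $s>\tau$ the quantity $|x(s-\tau)-z_{\tr}(s-\tau)|$ is not a mesh displacement: it is the accumulated numerical error at the earlier time $s-\tau$, which is exactly what you are trying to prove tends to zero, so invoking dominated convergence on it at this stage is circular. The standard repair is an induction over the delay intervals $[k\tau,(k+1)\tau]$: on $[0,\tau]$ the delayed arguments are the deterministic continuous initial data, whose discretization error does vanish as $\tr\to 0^{+}$; Gronwall then yields convergence on $[0,\tau]$; this established convergence feeds the (H3)-term on $[\tau,2\tau]$ through dominated convergence, and so on up to $T$. Without this induction (or without delegating to the truncated-equation theorem as the paper does), your Step 3 does not close, even though the overall localization framework of Steps 1 and 2 is exactly right.
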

\begin{proof}\textbf{Proof.}
For any $M>\Phi^{-1}(h_{\Phi, \mu}(1))$, choose $\tr_{1}\in(0,1)$ such that $\Phi^{-1}(h_{\Phi, \mu}(\tr_1))=M$.
Define
$
\theta_M^{\tr_1}=\vartheta_M \wedge \varrho_{\tr_1}^\tr,
$
where $\vartheta_M$ and $\varrho_{\tr_1}^\tr$ are defined by (\ref{sM_1}) and (\ref{3.18}), respectively.
For any $\kappa_1>0$, by Young's inequality
\begin{align}\la{s4.31}
 \E|x(T)-z_{\tr}(T)|^p
 =& \E\lf(|x(T)-z_{\tr}(T)|^p \textbf 1_{\{\theta_M^{\tr_1} > T\}}\rt)
+ \E\lf(|x(T)-z_{\tr}(T)|^p \textbf 1_{\{\theta_M^{\tr_1} \leq T\}}\rt) \nonumber \\
\leq & \E\lf(|x(T)-z_{\tr}(T)|^p \textbf 1_{\{\theta_M^{\tr_1} > T\}}\rt)+ \frac{p \kappa_1}{q}\E\lf(|x(T)-z_{\tr}(T)|^{q} \rt)\nn\\
&+ \frac{q-p}{q \kappa_1^{p/(q-p)}} \PP {\{\theta_M^{\tr_1} \leq T\}}.
\end{align}
It follows from  Theorem  \ref{th1} and Theorem \ref{th3} that
\begin{align*}
\frac{p \kappa_1}{q}\E\lf(|x(T)-z_{\tr}(T)|^{q}  \rt)\leq
\frac{p \kappa_1}{q}2^{q} (\E |x(T)|^{q}
+ \E |z_{\tr}(T)|^{q})
\leq C\frac{p \kappa_1}{q}.
\end{align*}
For any $\varepsilon_1>0$, choose $\kappa_1(\varepsilon_1)>0 $ small sufficiently
such that $C {p \kappa_1}/{q}\leq {\varepsilon_1}/{3}$.
Then
\begin{align}\label{ss4.32}
\frac{p \kappa_1}{q}\E\lf(|x(T)-z_{\tr}(T)|^{q}  \rt)
\leq\frac{\varepsilon_1}{3}.
\end{align}
Then we go a further step to  choose $M>\|\xi\|\vee\Phi^{-1}(h_{\Phi, \mu}(1))$  such that
$ {C(q-p)}/({M^{q} {q} \kappa_1^{p/({q}-p)}})\leq    {\varepsilon_1}/{6}$ and choose $\tr_1\in (0, 1]$ such that
$
 \Phi^{-1}(h_{\Phi, \mu}(\tr_1))= M.
$
From  (\ref{s2.6}) and (\ref{s4.18}) we obtain that
\begin{align}\la{s4.33}
&\frac{ q-p }{   {q} \kappa_1^{p/(q-p)}} \PP {\{\theta_M^{\tr_1} \leq T\}}\nn\\
\leq &   \frac{ q-p }{   {q} \kappa_1^{p/({q}-p)}}
\left(   \PP {\{\vartheta_{M } \leq T\}}+ \PP {\{\varrho_{  \tr}^{\tr_1} \leq T\}}\right)\nn\\
\leq &\frac{ q-p }{   {q} \kappa_1^{p/({q}-p)}}
\left(\frac{C}{M^q}+\frac{C}{[\Phi^{-1}(h_{\Phi, \mu}(\tr_1))]^q}\right)
=\frac{2 C({q}-p)}{ M^{q} {q} \kappa_1^{p/({q}-p)}} \leq \frac{\varepsilon_1}{3}  .
\end{align}
So it is sufficient for \eqref{s3.10} to show
$$\lim_{\tr\rightarrow 0^+}\E\lf(|x(T)-z_{\tr}(T)|^p \textbf 1_{\{\theta_M^{\tr_1} > T\}}\rt)=0.$$
For this purpose, we define
\begin{align*}
&f_M(x, y)=f\left(\lf(|x|\wedge M \rt) \frac{x}{|x|},\lf(|y|\wedge M \rt) \frac{y}{|y|}\right),\nn\\
&g_M(x, y)=g\left(\lf(|x|\wedge M \rt) \frac{x}{|x|}, \lf(|y|\wedge M \rt) \frac{y}{|y|}\right).
\end{align*}
Then  (H1) implies that for any $x,~\bar x,~y\in R^d$,
  \begin{align}\label{S4.3}
  |f_M(x, y)-f_M(\bar x, y)|\vee  |g_M(x, y)-g_M(\bar x, y)|\leq L_M|x-\bar x|.
  \end{align}
  Clearly, by  (\ref{S4.3}) and (H3), we have
 \begin{align}\label{S4.4}
 |f_M(x, y)|\vee  |g_M(x, y)|\leq&( L_M\vee \max_{|y|\leq M}|f(\textbf{0},y)|\vee\max_{|y|\leq M}|g(\textbf{0},y)|)(1+|x|).
 \end{align}
So we consider the linear SDDE
\be\la{F_5}
du(t) =f_M(u(t), u(t-\tau))dt +g_M(u(t), u(t-\tau))dW(t),
\ee
with  the initial data $\xi\in \mathcal{C}([-\tau, ~0]; ~\RR^d)$. Due to \cite[Theorem 2.1]{Sabanis2013}
  SDDE (\ref{F_5}) has a unique global solution $u(t)$ on $t\geq -\tau$.  Let $Y_{\tr}(t)$ be the piecewise
 EM solution of (\ref{F_5}). By (H3), (\ref{S4.3}) and (\ref{S4.4}) and according to \cite[Theorem 1]{Kumar-Sabanis}, it has the property
\begin{align}\label{s4.38}
\lim_{\tr\rightarrow 0^+}\E\Big[ \sup_{0\leq t\leq T} |u(t)-Y_{\tr}(t)|^{\tilde{p}}\Big]=0,
~\forall ~T> 0,~\tilde{p}>0.
\end{align}
Obviously,
\be\la{F_6}
x(t\wedge \vartheta_M)=u(t\wedge \vartheta_M),~\forall~ t\geq 0,~~\hbox{a.s}.
\ee
For any $\tr\in(0,\tr_1]$, the fact $\Phi^{-1}(h_{\Phi,\mu}(\tr))\geq\Phi^{-1}(h_{\Phi,\mu}(\tr_1))=M$ implies
\begin{align}\la{F_8}
z_{\tr}(t\wedge\theta_M^{\tr_1})=\br z_{\tr}(t\wedge\theta_M^{\tr_1})=Y_{\tr}(t\wedge\theta_M^{\tr_1}),.
~~\forall ~t~\geq0,~\hbox{a.s}.
\end{align}
Combining  (\ref{s4.38})-(\ref{F_8}) derives
\begin{align}\label{sss4.36}
&\lim_{\tr\rightarrow 0^+}\E\lf(x(T)-z_{\tr}(T)|^p \textbf 1_{\{\theta_M^{\tr_1} > T\}}\rt)\nn\\
 \leq&\lim_{\tr\rightarrow 0^+}\E\lf(|x(T\wedge\theta_M^{\tr_1})-z_{\tr}(T\wedge\theta_M^{\tr_1})|^p\rt)\nn\\
=&\lim_{\tr\rightarrow 0^+}\E\lf(|u(T\wedge\theta_M^{\tr_1})-Y_{\tr}(T\wedge\theta_M^{\tr_1})|^p\rt)\nn\\
\leq &\lim_{\tr\rightarrow 0^+}\E\left(\sup_{0\leq t\leq T}|u(t\wedge\theta_M^{\tr_1})-Y_{\tr}(t\wedge\theta_M^{\tr_1})|^p\right)\nn\\
\leq&\lim_{\tr\rightarrow 0^+}\E\left(\sup_{0\leq t\leq T}|u(t)-Y_{\tr}(t)|^p\right)=0.
\end{align}
Hence the proof is completed.
\end{proof}
\subsection{Convergence rate}\la{CR}
Furthermore, we shall obtain  the $\frac{1}{2}$ order  convergent rate of the TEM scheme $z_{\tr}(t)$ defined in (\ref{s3.7}).
We  first state below the relevant assumptions.

\textbf{(H4)} Assume that the initial data $\xi(t)$ satisfies the H\"{o}lder continuous with the index $\lambda\geq\frac{1}{2}$, i.e., for any $s_1,~s_2\in[-\tau, ~0]$, there exists a positive constant $K_3$ such that
\begin{align}\label{S3.1}
|\xi(s_1)-\xi(s_2)|\leq K_3|s_1-s_2|^\lambda.
\end{align}

\textbf{(H5)}
Assume that there is a pair of positive constants $\alpha$, $K_4$ such that for any $x,~\bar x,~y,~\bar y\in \RR^d$,
\be\label{s3.12}
 |f(x, y)-f(\bar{x},\bar y)|\leq  K_4(|x-\bar{x}|+|y-\bar y|)(1+|x|^{\alpha}+|\bar{x}|^{\alpha}+|y|^{\alpha}+|\bar y|^{\alpha}),
\ee
\be\label{s3.13}
|g(x, y)-g(\bar{x},\bar y)|^2\leq  K_4(|x-\bar{x}|^2+|y-\bar y|^2)(1+|x|^{\alpha}+|\bar{x}|^{\alpha}+|y|^{\alpha}+|\bar y|^{\alpha}).
\ee

\textbf{(H6)}
 ~Assume that there exist  positive constants  $2\leq r\leq \frac{q}{\alpha+3}\wedge \frac{q}{2\alpha}$,  $\beta>r-1$,  $K_5$,  and  a function $\hat V(\cdot,\cdot)\in\mathcal{V}(\RR^d\times\RR^d;~\RR_+)$,  such that
\begin{align*}
&|x-\bar x|^{r-2}[2\langle x-\bar{x}, f(x, y)-f(\bar{x},\bar y)\rangle+\beta|g(x, y)-g(\bar{x}, \bar y)|^2]\nn\\
\leq&K_5(| x-\bar{y} |^{r}+|y-\bar y|^{r})-\hat V(x, \bar x)+\hat V(y, \bar y),~~~~\forall ~x,~\bar x,~y,~\bar y\in\RR^d.
\end{align*}


One notices from \eqref{s3.13} that
\begin{align}\label{cond-3}
|g(x, y)|\leq& |g(x, y)-g(\mathbf{0}, \mathbf{0})|+|g(\mathbf{0}, \mathbf{0})|\nn\\
\leq& \sqrt {K_4}(|x|+|y|)(1+|x|^{\frac{\alpha}{2}}+|y|^{\frac{\alpha}{2}})+ |g(\mathbf{0}, \mathbf{0})|\nn\\
\leq& C(1+|x|^{\frac{\alpha}{2}+1}+|y|^{\frac{\alpha}{2}+1}).
\end{align}
\begin{rem}\label{r2} {
Due to \eqref{s3.2} and $(\textup{H}5)$, we may take
   \begin{align*}
\Phi(l)=&[|f(\mathbf{0},\mathbf{0})|+3l^{\alpha+1}K_4]\vee 2[|g(\mathbf{0}, \mathbf{0})|^2+3l^{\alpha+2}K_4]
\leq&|f(\mathbf{0},\mathbf{0})|\vee2|g(\mathbf{0},\mathbf{0})|^2+6l^{\alpha+2}K_4,
\end{align*}
 where $l\geq1$. Then
 \begin{align}\label{s3.20}
\Phi^{-1}(l)=\left(\frac{l-|f(\mathbf{0}, \mathbf{0})|\vee2|g(\mathbf{0}, \mathbf{0})|^2}{6K_4}\right)^{\frac{1}{\alpha+2}},
\end{align}
 where $l\geq|f(\mathbf{0}, \mathbf{0})|\vee2|g(\mathbf{0}, \mathbf{0})|^2+6K_4$.
And let $\mu=\frac{r(\alpha+2)}{2(q-r)}\in(0,\frac{1}{2}]$. Thus \eqref{s3.4} implies
\begin{align}\la{ssh}
h_{\Phi,\mu}(\tr)=[|f(\mathbf{0},\mathbf{0})|\vee2|g(\mathbf{0},\mathbf{0})|^2+6(\|\xi\|\vee 1)^{\alpha+2}K_4]\tr^{-\frac{r(\alpha+2)}{2(q-r)}}.
\end{align}
}\end{rem}

In order to estimate  the   convergence rate of the  TEM scheme, we prepare a auxiliary  process $\tilde{z}_{\tr}(t)$ described by
\begin{align}\label{s4.41}
\left\{
\begin{array}{ll}
\tilde{z}_{\tr}(t)=z_{i}^\tr+ f(z_{i}^\tr, z_{i-N}^\tr)( t-t_i )+
 g(z_{i}^\tr, z_{i-N}^\tr)( W(t)-W(t_i) ), \forall ~t\in [t_i, t_{i+1}),&\\
 \tilde{z}_{\tr}(t)=\xi(t), \forall ~t\in[-\tau,~0].
\end{array}
\right.
\end{align}
Obviously, $  \tilde{z}_{\tr}(t_i) =z_{\tr}(t_i)=z_{i}^\tr$ for  $i\geq -N$.

\begin{lemma}\la{lemma+1}
  Assume that $(\textup{H}2)$ and $(\textup{H}5)$
  hold. Then  for any $\tilde r\in(0,2q/(\alpha+2)]$,       

  \be\la{s4.42}
   \sup_{0\leq t\leq T}  \E\big(|\tilde{z}_{\tr}(t)-z_{\tr}(t)|^{\tilde r} \big)
   \leq C \tr^{\frac{\tilde r}{2} },~~~~  \forall~ T> 0.
  \ee
  \end{lemma}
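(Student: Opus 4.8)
The plan is to localise on a single mesh interval, split the gap into its drift and diffusion parts, and estimate these two parts by genuinely different devices. First note that, although the lemma only invokes $(\textup{H}2)$ and $(\textup{H}5)$, the polynomial Lipschitz bounds \eqref{s3.12}--\eqref{s3.13} in $(\textup{H}5)$ immediately yield the local Lipschitz condition $(\textup{H}1)$ and the uniform continuity in the delayed argument $(\textup{H}3)$, so the $q$th moment bound \eqref{Y_1} of Theorem \ref{th3} is at our disposal. Now fix $T>0$ and $t\in[t_i,t_{i+1})$ with $0\le i\le[T/\tr]$, and write $f_i=f(z_i^\tr,z_{i-N}^\tr)$, $g_i=g(z_i^\tr,z_{i-N}^\tr)$ as in the proof of Theorem \ref{th3}. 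By the definitions \eqref{s4.41} and \eqref{s3.7} the gap is the explicit one-step increment
$$\tilde z_\tr(t)-z_\tr(t)=f_i\,(t-t_i)+g_i\,\bigl(W(t)-W(t_i)\bigr).$$
Using $|a+b|^{\tilde r}\le C(|a|^{\tilde r}+|b|^{\tilde r})$, the bound $t-t_i\le\tr$, and the fact that $g_i$ is $\mathcal F_{t_i}$-measurable while $W(t)-W(t_i)$ is independent of $\mathcal F_{t_i}$ with $\E|W(t)-W(t_i)|^{\tilde r}\le C(t-t_i)^{\tilde r/2}\le C\tr^{\tilde r/2}$, I reduce the claim to controlling two moment factors:
$$\E\bigl|\tilde z_\tr(t)-z_\tr(t)\bigr|^{\tilde r}\le C\,\tr^{\tilde r}\,\E|f_i|^{\tilde r}+C\,\tr^{\tilde r/2}\,\E|g_i|^{\tilde r}.$$

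For the drift factor I would use the truncation bound \eqref{s3.5}, namely $|f_i|\le h_{\Phi,\mu}(\tr)(1+|z_i^\tr|)$. Since $\tilde r\le 2q/(\alpha+2)\le q$, Theorem \ref{th3} gives $\E(1+|z_i^\tr|)^{\tilde r}\le C$, so $\E|f_i|^{\tilde r}\le C\,h_{\Phi,\mu}^{\tilde r}(\tr)=C K^{\tilde r}\tr^{-\mu\tilde r}$. Hence the drift contribution is at most $C\tr^{\tilde r(1-\mu)}$, and because $\mu\le\frac12$ this is $\le C\tr^{\tilde r/2}$ for $\tr\in(0,1]$. The point here is that the full power $\tr^{\tilde r}$ supplied by the time increment is exactly what absorbs the truncation blow-up $\tr^{-\mu\tilde r}$.

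The diffusion factor is the crux, and it is where the same device fails: the Brownian increment only contributes $\tr^{\tilde r/2}$, which cannot absorb the factor $h_{\Phi,\mu}^{\tilde r/2}(\tr)=O(\tr^{-\mu\tilde r/2})$ that \eqref{s3.5} would produce (this would leave only $\tr^{\tilde r(1-\mu)/2}$, insufficient when $\mu=\frac12$). I therefore abandon the truncation estimate for $g$ and instead use the genuine polynomial growth recorded in \eqref{cond-3}, $|g_i|\le C(1+|z_i^\tr|^{\alpha/2+1}+|z_{i-N}^\tr|^{\alpha/2+1})$. Then $\E|g_i|^{\tilde r}\le C\bigl(1+\E|z_i^\tr|^{\tilde r(\alpha+2)/2}+\E|z_{i-N}^\tr|^{\tilde r(\alpha+2)/2}\bigr)$, and the hypothesis $\tilde r\le 2q/(\alpha+2)$ makes the exponent $\tilde r(\alpha+2)/2\le q$, so Theorem \ref{th3} (together with $|z_{i-N}^\tr|\le\|\xi\|$ when $i-N<0$) gives $\E|g_i|^{\tilde r}\le C$ uniformly in $i$ and $\tr$. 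Consequently the diffusion contribution is bounded by $C\tr^{\tilde r/2}$.

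Combining the two estimates and taking the supremum over $t\in[0,T]$, with $C$ depending on $T,q,\tilde r,\alpha,K_4,K,\|\xi\|$ but not on $\tr$, yields \eqref{s4.42}. The only genuine obstacle is the asymmetric treatment just described: the drift is controlled by the truncation bound under the constraint $\mu\le\frac12$, whereas the diffusion forces the use of the polynomial growth \eqref{cond-3}, and it is precisely this step that, through the moment bound of Theorem \ref{th3}, pins down the admissible range $\tilde r\in(0,2q/(\alpha+2)]$.
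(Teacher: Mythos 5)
Your proof is correct and follows essentially the same route as the paper's: the same one-step decomposition of $\tilde z_{\tr}(t)-z_{\tr}(t)$ into drift and diffusion increments, the same asymmetric treatment (truncation bound \eqref{s3.5} plus $\mu\le\tfrac12$ for the drift, polynomial growth \eqref{cond-3} plus the moment bound of Theorem \ref{th3} for the diffusion), and the same role for the constraint $\tilde r\le 2q/(\alpha+2)$. Your explicit remark that $(\textup{H}5)$ implies $(\textup{H}1)$ and $(\textup{H}3)$, so that Theorem \ref{th3} is legitimately available under the lemma's stated hypotheses, is a small justification the paper leaves implicit.
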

\begin{proof}\textbf {Proof.}
Fix $\tilde r\in(0,2q/(\alpha+2)]$.
Recalling  (\ref{s4.41}), we have that for any $t\in\big[t_i, t_{i+1}\big)$
 \begin{align*}
 &\E\big(|\tilde{z}_{\tr}(t)-z_{\tr}(t)|^{\tilde r} \big)=\E\big(|\tilde{z}_{\tr}(t)-z_{\tr}(t_i)|^{\tilde r} \big)\nn\\
 &\leq  2^{\tilde r} \E |  f(z_{i}^\tr, z_{i-N}^\tr) |^{\tilde r} \tr^{\tilde r}
  + 2^{\tilde r}\E\big( |  g(z_{i}^\tr, z_{i-N}^\tr)|^{\tilde r}  |W(t)-W(t_i)|^{\tilde r}\big) \\
 &\leq  C\lf( \E | f(z_{i}^\tr, z_{i-N}^\tr) |^{\tilde r}\tr^{\tilde r}
 +  \E |  g(z_{i}^\tr, z_{i-N}^\tr)|^{\tilde r} \tr^{\frac{\tilde r}{2}} \rt).
  \end{align*}
By  (\ref{s3.4}), (\ref{s3.5}), (\ref{cond-3}) and Theorem \ref{th3},
  \begin{align*}
 &\E\big(|\tilde{z}_{\tr}(t)-z_{\tr}(t)|^{{\tilde r} } \big)\nn\\
 \leq& C h_{\Phi,\mu}^{\tilde r}(\tr)\E(1+|z_{i}^\tr|)^{\tilde r}  \tr^{\tilde r}
 + C \E \big( 1+ |z_{i}^\tr|^{\frac{\alpha}{2}+1} +|z_{i-N}^\tr|^{\frac{\alpha}{2}+1}\big)^{\tilde r}
 \tr^{\frac{{\tilde r}}{2}}\nonumber\\
 \leq& C\big(1+(\E|z_{i}^\tr|^q)^{\frac{\tilde r}{q}}\big)\tr^{\frac{\tilde r}{2}}
 +C\big(1+(\E |z_{i}^\tr|^q)^{\frac{(\alpha+2)\tilde r}{{2q}}}+ (\E    |z_{i-N}^\tr|^q)^{\frac{(\alpha+2)\tilde r}{{2q}}}\big)\tr^{\frac{\tilde r}{2}}\nn\\
\leq & C \tr^{\frac{{\tilde r}}{2} },
 \end{align*}
which implies the required assertion.  \end{proof}

By the similar way as the Theorem \ref{th3} and Lemma \ref{L:C_1}, we yield the results for the auxiliary process.
\begin{lemma}\la{lemma+2}
Assume that $(\textup{H}1)$-$(\textup{H}3)$ hold.
 Then the auxiliary process  \eqref{s4.41} has the property
  \be\la{ss4.44}
     \sup_{0<\tr\leq 1}\sup_{0\leq t\leq T}\E|\tilde{z}_{\tr}(t)|^{q}\leq C,~~~~\forall ~T>0.
  \ee

  \end{lemma}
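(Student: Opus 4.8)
The plan is to avoid any fresh stochastic calculus and instead read the bound off the explicit one-step representation \eqref{s4.41} together with the moment estimate \eqref{Y_1} already proved in Theorem~\ref{th3}. Fix $T>0$ and let $t\in[t_i,t_{i+1})$ with $i\tr\le T$; write $f_i=f(z_i^\tr,z_{i-N}^\tr)$ and $g_i=g(z_i^\tr,z_{i-N}^\tr)$ for brevity. Then \eqref{s4.41} reads
$$\tilde z_\tr(t)=z_i^\tr+f_i\,(t-t_i)+g_i\,(W(t)-W(t_i)),$$
so by the elementary inequality $|a+b+c|^q\le C(|a|^q+|b|^q+|c|^q)$ (valid for every $q>0$) it suffices to bound the $q$th moment of each of the three summands uniformly in $\tr\in(0,1]$, in $i$ with $i\tr\le T$, and in $t\in[t_i,t_{i+1})$.

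The first summand is handled immediately by \eqref{Y_1}, which gives $\E|z_i^\tr|^q\le C$. For the drift term the truncation bound \eqref{s3.5} gives $|f_i|\le h_{\Phi,\mu}(\tr)(1+|z_i^\tr|)=K\tr^{-\mu}(1+|z_i^\tr|)$, whence, using $t-t_i\le\tr\le1$ and $\mu\le\tfrac12$,
$$\E\big(|f_i|^q(t-t_i)^q\big)\le K^q\,\tr^{q(1-\mu)}\,\E(1+|z_i^\tr|)^q\le C.$$
For the diffusion term I condition on $\mathcal F_{t_i}$: since $g_i$ is $\mathcal F_{t_i}$-measurable while $W(t)-W(t_i)$ is independent of $\mathcal F_{t_i}$ with $\E|W(t)-W(t_i)|^q=C_q(t-t_i)^{q/2}$, the bound $|g_i|\le h_{\Phi,\mu}^{1/2}(\tr)(1+|z_i^\tr|)=K^{1/2}\tr^{-\mu/2}(1+|z_i^\tr|)$ from \eqref{s3.5} yields
$$\E\big(|g_i|^q|W(t)-W(t_i)|^q\big)=C_q(t-t_i)^{q/2}\,\E|g_i|^q\le C_q\,K^{q/2}\,\tr^{q(1-\mu)/2}\,\E(1+|z_i^\tr|)^q\le C.$$
Again $\tr^{q(1-\mu)/2}\le1$ because $\mu\le\tfrac12$, and $\E(1+|z_i^\tr|)^q\le C$ by \eqref{Y_1}. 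Summing the three estimates gives $\sup_{0\le t\le T}\E|\tilde z_\tr(t)|^q\le C$ with $C$ independent of $\tr\in(0,1]$, which is \eqref{ss4.44}.

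The one point that needs care — and the reason the restriction $\mu\in(0,\tfrac12]$ is indispensable — is the balancing of powers of $\tr$: the truncation forces the coefficient bounds to carry the blow-up factor $h_{\Phi,\mu}(\tr)=K\tr^{-\mu}$, and the argument closes precisely because the step length contributes a compensating $\tr^{q}$ in the drift and the Gaussian increment a compensating $\tr^{q/2}$ in the diffusion. This is the same mechanism that drives \eqref{s4.5} in Theorem~\ref{th3}. If one prefers the route suggested by the parallel with Theorem~\ref{th3} and Lemma~\ref{L:C_1}, one may instead write $(1+|\tilde z_\tr(t)|^2)^{q/2}=(1+|z_i^\tr|^2)^{q/2}(1+\tilde\Gamma_i(t))^{q/2}$, where $\tilde\Gamma_i(t)$ is the analogue of $\Gamma_{i-1}$ obtained by replacing $\tr$ with $t-t_i$ and $\tr W_{i-1}$ with $W(t)-W(t_i)$, and then run the conditional-expectation expansion and the Khasminskii estimate $(\textup{H}2)$ verbatim; the localization device of Lemma~\ref{L:C_1} is available should a priori integrability be in question. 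I expect the elementary route above to be both shorter and cleaner, since it never needs to invoke $(\textup{H}2)$ or to telescope the $V_1$ contributions for the auxiliary increment.
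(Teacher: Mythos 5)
Your proof is correct, and it takes a genuinely different route from the paper. The paper gives no separate argument for this lemma: it asserts that one should repeat, for the interpolant $\tilde z_{\tr}(t)$, the conditional expansion of $\big(1+|\cdot|^2\big)^{q/2}$ via the $\Gamma$-quantities and the Khasminskii condition $(\textup{H}2)$, exactly as in Theorem~\ref{th3} and Lemma~\ref{L:C_1} (your closing remark sketches precisely this alternative). You instead bootstrap off the already-proved discrete bound \eqref{Y_1}: decompose $\tilde z_\tr(t)=z_i^\tr+f_i(t-t_i)+g_i(W(t)-W(t_i))$, use the truncation bounds \eqref{s3.5} with $h_{\Phi,\mu}(\tr)=K\tr^{-\mu}$, and exploit the independence of the Brownian increment from $\mathcal F_{t_i}$ together with $\E|W(t)-W(t_i)|^q=C_q(t-t_i)^{q/2}$ (finite for every $q>0$). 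The power counting is right: the drift contributes $\tr^{q(1-\mu)}$ and the diffusion $\tr^{q(1-\mu)/2}$, both bounded by $1$ since $\mu\le\tfrac12<1$ and $\tr\le1$, so the constant is uniform in $\tr$, $i$, and $t$. What your route buys is brevity and transparency: no re-derivation of the conditional moment expansion, no telescoping of the $V_1$ terms, and no direct appeal to $(\textup{H}2)$ (it enters only through Theorem~\ref{th3}); what the paper's route buys is structural uniformity with the rest of Section~3, which matters for the stopping-time variant in Lemma~\ref{lemma+3} where a pathwise localization of the same expansion is needed. One peripheral quibble: your claim that $\mu\in(0,\tfrac12]$ is \emph{indispensable} here is an overstatement — your own estimates close for any $\mu<1$; the sharper restriction $\mu\le\tfrac12$ is needed elsewhere in the paper (e.g.\ for the convergence rate), not for this lemma.
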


\begin{lemma}\la{lemma+3}
Assume that $(\textup{H}1)$-$(\textup{H}3)$ hold. For  any $\tr\in(0,1]$, let
 \be\la{cond-10}
\tilde{ \varrho}_{\tr} := \inf \{ t\geq -\tau: | {\tilde{z}}_{\tr}(t)|\geq  \Phi^{-1}(h_{\Phi,\mu}(\tr))\}.
 \ee
Then we have that  for any   $T>0$,
  \be\la{cond-11}
   \PP {\{\tilde{\varrho}_{\tr} \leq T\}}\leq \frac{K }{ \big( {\Phi}^{-1}(h_{\Phi,\mu}(\tr))\big)^{{q}} }.
  \ee

  \end{lemma}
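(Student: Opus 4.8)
The plan is to reduce \eqref{cond-11} to the uniform-in-$\tr$ moment estimate
\[
\sup_{0\le t\le T}\E\big(1+|\tilde z_\tr(t\wedge\tilde\varrho_\tr)|^2\big)^{q/2}\le C,
\]
after which the assertion follows from the Chebyshev-type argument used at the end of \lemref{L:C_1}: on $\{\tilde\varrho_\tr\le T\}$ the continuity of $\tilde z_\tr$ forces $|\tilde z_\tr(\tilde\varrho_\tr)|=\Phi^{-1}(h_{\Phi,\mu}(\tr))$, so
\[
\big(\Phi^{-1}(h_{\Phi,\mu}(\tr))\big)^{q}\,\PP\{\tilde\varrho_\tr\le T\}\le\E\big[|\tilde z_\tr(T\wedge\tilde\varrho_\tr)|^{q}\big]\le\E\big(1+|\tilde z_\tr(T\wedge\tilde\varrho_\tr)|^2\big)^{q/2}\le C.
\]

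To obtain the moment bound I would first note that the auxiliary process \eqref{s4.41} solves, on each $[t_i,t_{i+1})$, the It\^o equation $\mathrm d\tilde z_\tr(t)=f(z_\tr(t),z_\tr(t-\tau))\,\mathrm dt+g(z_\tr(t),z_\tr(t-\tau))\,\mathrm dW(t)$, whose coefficients are frozen at the truncated, piecewise-constant values $z_\tr(s)$ (so $|z_\tr(s)|\le\Phi^{-1}(h_{\Phi,\mu}(\tr))$ for all $s$) with $\tilde z_\tr(t_i)=z_i^\tr$. Applying It\^o's formula to $U(x)=(1+|x|^2)^{q/2}$ and stopping at $t\wedge\tilde\varrho_\tr$, the integrand is the operator carrying the derivatives of $U$ at $\tilde z_\tr(s)$ but $f,g$ at $(z_\tr(s),z_\tr(s-\tau))$; I split it as $\mathcal LU(z_\tr(s),z_\tr(s-\tau))+R(s)$, where the first term has $U_x,U_{xx}$ evaluated at the same on-grid point $z_\tr(s)$, and $R(s)$ gathers the mismatches $\langle U_x(\tilde z_\tr(s))-U_x(z_\tr(s)),f\rangle$ and $\tfrac12\operatorname{tr}[g^{\top}(U_{xx}(\tilde z_\tr(s))-U_{xx}(z_\tr(s)))g]$.

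The on-grid term is handled exactly as in \eqref{s2.7}: by (H2) it is bounded by $\frac q2K_1\big(1+|z_\tr(s)|^{q}+|z_\tr(s-\tau)|^{q}\big)-\frac q2K_2\big(V_1(z_\tr(s))-V_1(z_\tr(s-\tau))\big)$. Crucially, its time integral need not be closed by Gronwall here, because the $q$th moments of the on-grid scheme are already controlled by \thmref{th3}, so $\E\int_0^{T\wedge\tilde\varrho_\tr}K_1(1+|z_\tr(s)|^q+|z_\tr(s-\tau)|^q)\,\mathrm ds\le C$, while the $V_1$-difference telescopes under the $N$-step delay shift (as in \thmref{th3}) into boundary contributions dominated by $\max_{-N\le j\le0}V_1(\xi(t_j))$. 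Thus the main term contributes only a finite constant.

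The main obstacle is to show $\E\int_0^{T\wedge\tilde\varrho_\tr}|R(s)|\,\mathrm ds\le C$ uniformly in $\tr$. Before $\tilde\varrho_\tr$ both $|\tilde z_\tr(s)|$ and $|z_\tr(s)|$ lie below $\Phi^{-1}(h_{\Phi,\mu}(\tr))$, so the local Lipschitz constants of $U_x$ and $U_{xx}$ on that ball grow only polynomially in the radius, while \eqref{s3.5} bounds $|f|$ by $h_{\Phi,\mu}(\tr)(1+|z_\tr(s)|)$ and $|g|^2$ by $h_{\Phi,\mu}(\tr)(1+|z_\tr(s)|)^2$; hence $|R(s)|$ is dominated by a polynomial in $\Phi^{-1}(h_{\Phi,\mu}(\tr))$ times $h_{\Phi,\mu}(\tr)$ times $|\tilde z_\tr(s)-z_\tr(s)|$. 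Applying \lemref{lemma+1} — or, staying within (H1)--(H3), estimating the one-step increment directly from \eqref{s3.5} and the moment bound of \thmref{th3} — gives $\E|\tilde z_\tr(s)-z_\tr(s)|^{\tilde r}\le C\tr^{\tilde r/2}$. The delicate point is that $h_{\Phi,\mu}(\tr)$ and $\Phi^{-1}(h_{\Phi,\mu}(\tr))$ blow up as $\tr\to0$, so after a H\"older splitting of $R(s)$ against the $q$th-moment bounds one must check that the negative powers of $\tr$ produced by these truncation radii are dominated by the positive power gained from the increment estimate. This is precisely where the calibrated exponent $\mu=\frac{r(\alpha+2)}{2(q-r)}\le\frac12$ of \remref{r2} enters, ensuring $\E\int_0^{T\wedge\tilde\varrho_\tr}|R(s)|\,\mathrm ds$ stays bounded and closing the moment estimate.
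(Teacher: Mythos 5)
Your opening reduction (Chebyshev on the stopped process) and your treatment of the on-grid term and the $V_1$-telescoping are fine; note only that $\tilde z_\tr$ is not continuous --- it jumps at grid points whenever the truncation $\Upsilon_{\Phi,\mu}^{\tr}$ acts --- but such jumps land exactly on the sphere of radius $\Phi^{-1}(h_{\Phi,\mu}(\tr))$ and only decrease $U$, so this is harmless. The genuine gap is the claim that $\E\int_0^{T\wedge\tilde\varrho_\tr}|R(s)|\,\mathrm{d}s\le C$. \lemref{lemma+3} assumes only (H1)--(H3), under which $\Phi$ is an \emph{arbitrary} strictly increasing continuous majorant in \eqref{s3.2}; there is then no quantitative relation between the truncation radius $\Phi^{-1}(h_{\Phi,\mu}(\tr))$ and $\tr$. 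Your bound for $R(s)$ has the form $\mathrm{poly}\big(\Phi^{-1}(h_{\Phi,\mu}(\tr))\big)\cdot h_{\Phi,\mu}(\tr)\cdot|\tilde z_\tr(s)-z_\tr(s)|$, and no fixed positive power of $\tr$ gained from an increment estimate can beat that prefactor in general: for instance, with bounded coefficients one may legitimately take $\Phi(l)=C_0+\log\log(l+e)$, so that $\Phi^{-1}(h_{\Phi,\mu}(\tr))\sim\exp\big(\exp(K\tr^{-\mu}-C_0)\big)$. Your way of closing the estimate --- invoking the calibration $\mu=\frac{r(\alpha+2)}{2(q-r)}$ of \remref{r2} and the rate of \lemref{lemma+1} --- imports hypotheses the lemma does not assume: both presuppose (H5) (and (H6)), while \lemref{lemma+3} is stated under (H1)--(H3). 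Worse, even granting (H5)--(H6), the exponents do not close: with $\Phi^{-1}(l)\sim l^{1/(\alpha+2)}$, boundedness of $\big(\Phi^{-1}(h_{\Phi,\mu}(\tr))\big)^{q-1}h_{\Phi,\mu}(\tr)\,\tr^{1/2}$ requires $\mu\big(\frac{q-1}{\alpha+2}+1\big)\le\frac12$, which forces $r\le\frac{q}{q+\alpha+2}<1$, contradicting $r\ge2$ in (H6); and a H\"older splitting against the available $q$th moments would need increment moments of order at least $q$, which \lemref{lemma+1} supplies only when $\alpha\le 0$.

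The idea you are missing --- and what the paper's phrase ``by the similar way as \thmref{th3} and \lemref{L:C_1}'' indicates --- is that one never compares derivatives of $U$ at $\tilde z_\tr(s)$ and at $z_\tr(s)$, so no remainder $R(s)$ ever appears. For $t\in[t_i,t_{i+1})$, condition on $\mathcal{F}_{t_i}$ and use the exact factorization $\big(1+|\tilde z_\tr(t)|^2\big)^{q/2}=\big(1+|z_i^\tr|^2\big)^{q/2}\big(1+\Gamma_i(t)\big)^{q/2}$, where $\Gamma_i(t)$ is built from the frozen $f_i,g_i$ and the increment $W(t)-W(t_i)$ exactly as in \eqref{s4.1}, and expand $(1+\Gamma)^{q/2}$ by \cite[Lemma 3.3]{yang2018}. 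Every resulting term is estimated through \eqref{s3.5}, so only $h_{\Phi,\mu}(\tr)$ --- never $\Phi^{-1}(h_{\Phi,\mu}(\tr))$ --- enters, and products such as $h^2_{\Phi,\mu}(\tr)\tr^2=K^2\tr^{2-2\mu}\le K^2\tr$ are harmless precisely because $\mu\le\frac12$; condition (H2) is applied with the derivative argument and the coefficient arguments both equal to the same grid value $z_i^\tr$, as in \eqref{Y_7}. The stopping time is then incorporated by the indicator/Doob-stopping device of \lemref{L:C_1} (cf.\ \eqref{ss4.21} and \eqref{ssg4.21}), the discrete Gronwall inequality yields $\sup_{0\le t\le T}\E\big(1+|\tilde z_\tr(t\wedge\tilde\varrho_\tr)|^2\big)^{q/2}\le C$, and Chebyshev finishes as you indicated. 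This route works under (H1)--(H3) alone, which is exactly what the lemma requires.
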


We go  a  further step  to estimate the error between   the auxiliary process $\tilde{z}_{\tr}(t)$  and  the exact solution $x(t)$. Define $e(t)=x(t)-\tilde{z}_{\tr}(t)$ for  short, which satisfies
\begin{align*}
\mathrm{d}e(t)=&\int_{0}^{t}\big[f(x(s),x(s-\tau))-f(z_{\tr}(s),z_{\tr}(s-\tau))\big]\mathrm ds\nn\\
&+\int_{0}^{t}\big[g(x(s),x(s-\tau))-g(z_{\tr}(s),z_{\tr}(s-\tau))\big]\mathrm dW(s).
\end{align*}
\begin{lemma}\la{lemma+4}
Assume that $(\textup{H}2)$, $(\textup{H}4)$-$(\textup{H}6)$ hold.
Then one  has the property
\begin{align}\la{s4.46}
\E \big|e(T)\big|^{r}
\leq C\tr^{\frac{r}{2}},~~~~\forall ~T\geq0.
\end{align}

\end{lemma}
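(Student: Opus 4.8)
The plan is to run It\^o's formula on $|e(t)|^r$ and close a Gronwall estimate, with (H6) supplying the monotonicity that turns the drift into a controllable form. Write $A(s)=f(x(s),x(s-\tau))-f(z_\tr(s),z_\tr(s-\tau))$ and $B(s)=g(x(s),x(s-\tau))-g(z_\tr(s),z_\tr(s-\tau))$ for the coefficient increments driving $e$. Since $r\ge2$ the map $x\mapsto|x|^r$ is $C^2$, and It\^o's formula gives
\[\E|e(t)|^r=\E\int_0^t\Big[r|e|^{r-2}\langle e,A\rangle+\tfrac r2|e|^{r-2}|B|^2+\tfrac{r(r-2)}2|e|^{r-4}|B^{\mathrm T}e|^2\Big]\mathrm ds,\]
where the stochastic integral is removed by localizing with $\vartheta_M\wedge\tilde\varrho_\tr$ and letting $M\to\infty$, which is justified by the uniform moment bounds of Theorem~\ref{th1} and Lemma~\ref{lemma+2} together with the exit estimates of Theorem~\ref{th1} and Lemma~\ref{lemma+3}. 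Bounding $|B^{\mathrm T}e|^2\le|e|^2|B|^2$ collapses the diffusion contribution into $\tfrac{r(r-1)}2|e|^{r-2}|B|^2$.

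The difficulty is that It\^o produces the error $e=x-\tilde z_\tr$ relative to the continuous interpolation, while $A,B$ are evaluated at the step process $z_\tr$. To align the two I would recenter the increments on $\tilde z_\tr$: writing $A=A_1+A_2$ with $A_1=f(x(s),x(s-\tau))-f(\tilde z_\tr(s),\tilde z_\tr(s-\tau))$ and $A_2=f(\tilde z_\tr(s),\tilde z_\tr(s-\tau))-f(z_\tr(s),z_\tr(s-\tau))$, and likewise $B=B_1+B_2$. Since $|x(s)-\tilde z_\tr(s)|=|e(s)|$, the pair $(x,\tilde z_\tr)$ is precisely the one (H6) is stated for. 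Using $|B|^2\le(1+\epsilon)|B_1|^2+(1+\epsilon^{-1})|B_2|^2$ with $\epsilon>0$ chosen so small that $(r-1)(1+\epsilon)\le\beta$ (possible because $\beta>r-1$), the principal part is dominated by $\tfrac r2|e|^{r-2}\big(2\langle e,A_1\rangle+\beta|B_1|^2\big)$, so (H6) yields the bound $\tfrac r2\big(K_5(|e(s)|^r+|e(s-\tau)|^r)-\hat V(x(s),\tilde z_\tr(s))+\hat V(x(s-\tau),\tilde z_\tr(s-\tau))\big)$.

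Integrating and taking expectations, the two $\hat V$-terms telescope under the substitution $s\mapsto s-\tau$: on $[-\tau,0]$ one has $x=\tilde z_\tr=\xi$, so $\hat V(\xi,\xi)=0$, and what survives is $-\E\int_{t-\tau}^t\hat V(x,\tilde z_\tr)\mathrm ds\le0$. The same shift, together with $e\equiv0$ on $[-\tau,0]$, bounds $\int_0^t\E|e(s-\tau)|^r\mathrm ds$ by $\int_0^t\E|e(s)|^r\mathrm ds$, feeding the Gronwall loop. For the remainder $r|e|^{r-2}\langle e,A_2\rangle+C|e|^{r-2}|B_2|^2$, (H5) gives $|A_2|\le K_4(|\rho|+|\rho_\tau|)(1+\text{polynomial in }|\tilde z_\tr|,|z_\tr|)$ and the analogous bound for $|B_2|^2$, where $\rho=\tilde z_\tr-z_\tr$; Young's inequality splits off an absorbable $\int_0^t\E|e|^r$ and leaves $\E|A_2|^r+\E|B_2|^r$. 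A H\"older split with exponents admissible under $r\le q/(\alpha+3)\wedge q/(2\alpha)$, combined with the moment bounds (Theorem~\ref{th3}, Lemma~\ref{lemma+2}) and the key estimate $\E|\rho|^{\tilde r}\le C\tr^{\tilde r/2}$ of Lemma~\ref{lemma+1}, gives $\E|A_2|^r+\E|B_2|^r\le C\tr^{r/2}$; on the initial segment $[-\tau,0]$, where Lemma~\ref{lemma+1} does not apply, $\rho_\tau$ is the oscillation of $\xi$ across one mesh and is controlled by (H4) as $|\rho_\tau|\le K_3\tr^\lambda\le K_3\tr^{1/2}$.

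Collecting everything yields $\E|e(t)|^r\le C\tr^{r/2}+C\int_0^t\E|e(s)|^r\mathrm ds$ for $t\in[0,T]$, and the Gronwall inequality gives \eqref{s4.46}. The main obstacle is exactly the recentering bookkeeping: one must route the step-to-interpolation discrepancy $\rho$ through (H5), Lemma~\ref{lemma+1} and (H4), and verify that each resulting cross term is genuinely $O(\tr^{r/2})$ — this is where the moment budget and the exponent restrictions in (H6) are spent.
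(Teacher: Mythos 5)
Your core argument --- It\^o's formula on $|e|^r$, recentring the coefficient increments on $\tilde z_{\tr}$ so that (H6) applies to the pair $(x,\tilde z_{\tr})$ with $(1+\epsilon)(r-1)\le\beta$, telescoping the $\hat V$ terms via $\hat V(\xi,\xi)=0$, and routing the discrepancy $\rho=\tilde z_{\tr}-z_{\tr}$ through (H5), Lemma \ref{lemma+1}, H\"older's inequality and (H4) --- is exactly the paper's route to \eqref{ss17}. The genuine gap is in how you dispose of the localization. Your It\^o estimate is only valid up to the stopping time $\vartheta_M\wedge\tilde\varrho_{\tr}$ (you invoke it precisely to kill the stochastic integral), and letting $M\to\infty$ via Fatou removes only $\vartheta_M$; what you obtain is $\E|e(T\wedge\tilde\varrho_{\tr})|^{r}\le C\tr^{r/2}$, yet your final display asserts the Gronwall inequality for the \emph{unstopped} quantity $\E|e(t)|^{r}$. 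Passing from the stopped to the unstopped moment is not a limiting argument: for each fixed $\tr$ the event $\{\tilde\varrho_{\tr}\le T\}$ has positive probability, and on it $|e(T)|$ is uncontrolled by your estimate.

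This is exactly where the paper spends its remaining hypotheses. In \eqref{cond-13} it writes, by Young's inequality,
\begin{align*}
\E|e(T)|^{r}\le \E\big(|e(T)|^{r}\mathbf 1_{\{\chi_{\tr}>T\}}\big)
+\frac{r\tr^{r/2}}{q}\,\E|e(T)|^{q}
+\frac{q-r}{q\,\tr^{\frac{r^2}{2(q-r)}}}\,\PP(\chi_{\tr}\le T),
\end{align*}
with $\chi_{\tr}=\vartheta_{\Phi^{-1}(h_{\Phi,\mu}(\tr))}\wedge\varrho_{\tr}^{\tr}\wedge\tilde\varrho_{\tr}$. The middle term is $O(\tr^{r/2})$ by Theorem \ref{th1} and Lemma \ref{lemma+2}; the last term is $O(\tr^{r/2})$ \emph{only} because of the tuned choice $\mu=\frac{r(\alpha+2)}{2(q-r)}$ and the explicit polynomial form of $\Phi^{-1}$ from Remark \ref{r2} (see \eqref{s3.20} and \eqref{ssh}), which give $\big(\Phi^{-1}(h_{\Phi,\mu}(\tr))\big)^{-q}\le C\tr^{\frac{qr}{2(q-r)}}$, so that \eqref{sss4.44} produces the exponent $\frac{qr}{2(q-r)}-\frac{r^2}{2(q-r)}=\frac r2$. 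Your appeal to ``the exit estimates of Theorem \ref{th1} and Lemma \ref{lemma+3}'' does not substitute for this: a small exit probability by itself yields no rate; it must be weighted against the $q$-th moment via Young/H\"older, and the resulting negative power of $\Phi^{-1}(h_{\Phi,\mu}(\tr))$ must be converted into the power $\tr^{r/2}$, which is where $\mu$, $r$, $q$, $\alpha$ interact. (Alternatively one could avoid localization entirely by checking that the moment bounds make the stochastic integral a true martingale --- the integrand has total degree $2r+\alpha\le q$ under (H6) --- but that is a different argument, and as written your proposal relies on the stopping time and never removes it.)
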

\begin{proof}Proof.
Define $\chi_{ \tr}=\vartheta_{ {\Phi}^{-1}(h_{\Phi,\mu}(\tr))}{ \wedge \varrho_{\tr}^\tr}
\wedge \tilde{\varrho}_{\tr}$,
where  $\vartheta_M$, $\varrho_{\tr}^\tr$ and $\tilde{\varrho}_{\tr}$ are
defined in (\ref{sM_1}), (\ref{3.18}) and (\ref{cond-10}),  respectively.
By Young's inequality
\begin{align}\la{cond-13}
\E|e(T)|^{r}
 =&\E\lf(|e(T)|^{r}
 \textbf 1_{\{\chi_{ \tr}>T\}}\rt)+\E\lf(|e(T)|^{r}
\textbf 1_{\{\chi_{ \tr}\leq T\}}\rt) \nonumber \\
 \leq &\E\lf(|e(T)|^{r}
  \textbf 1_{\{\chi_{ \tr}>T\}}\rt)+ \frac{r\tr^{\frac{r}{2}}}{{q}}\E|e(T)|^{q}
  + \frac{q-r}
{q\tr^{\frac{ r^2}{2(q-r)}}}\PP(\chi_{ \tr}\leq T).
\end{align}
By   Theorem \ref{th1} and Lemma \ref{lemma+2}
\begin{align}\label{cond-19}
 \frac{r\tr^{\frac{r}{2}}}{{q}}\E|e(T)|^{q}
  \leq  2^{q-1}\frac{r\tr^{\frac{r}{2}}}{{q}}\left(\E|x(T)|^q+\E|\tilde z_{\tr}(T)|^q\right)
  \leq C\tr^{\frac{r}{2}}.
\end{align}
Using   (\ref{s2.6}), (\ref{s4.18}), (\ref{cond-11}), and then by (\ref{s3.20}) and (\ref{ssh}), we have
\begin{align}\label{sss4.44}
 &\frac{q-r}{q\tr^{\frac{ r^2}{2(q-r)}}}
 \PP(\chi_{ \tr}\leq T)\nn\\
   \leq &   \frac{q-r}{q\tr^{\frac{r^2}
  {2(q-r)}}}  \Big( \PP {\{\vartheta_{\Phi^{-1}(h_{\Phi,\mu}(\tr)) } \leq T\}}
   +\PP {\{\varrho_{  \tr}^{\tr} \leq T\}} + \PP {\{\tilde{\varrho}_{  \tr} \leq T\}} \Big)\nn\\
 \leq &\frac{q-r}{q\tr^{\frac{r^2}{2(q-r)}}}
   \frac{3C}{( {\Phi}^{-1}(h_{\Phi,\mu}(\tr)))^{{q}}}
 \leq C\tr^{\frac{qr}{2(q-r)}-\frac{r^2}{2(q-r)}}=C\tr^{\frac{r}{2}}.
\end{align}
Next we estimate the first term on the right hand of (\ref{cond-13}).
Using the It\^{o} formula, we have
\begin{align}\label{ss27}
&\big| e(T\wedge\chi_\tr)\big|^{r}\nn\\
 \leq&\int_0^{T\wedge\chi_\tr}\frac{r}{2}|e(s)|^{r-2}
 \Big[2\langle e(s), f(x(s), x(s-\tau)) -f(z_{\tr}(s), z_{\tr}(s-\tau))\big\rangle\nn\\
 &+(r-1)\big|g(x(s), x(s-\tau))-g(z_{\tr}(s), z_{\tr}(s-\tau))\big|^2\Big]\mathrm{d}s\nn\\
 & + \int_0^{T\wedge\chi_\tr}r|e(s)|^{r-2}
  \langle e(s), g(x(s), x(s-\tau)) -g(z_{\tr}(s), z_{\tr}(s-\tau))\big\rangle\mathrm{d}W(s).
\end{align}
Due to $r\in[2,\beta+1)$, one chooses a constant $\kappa_2>0$,  such that $(1+\kappa_2)(r-1)\leq \beta$. It follows from the elementary inequality and (H5) that
\begin{align*}
&2\langle e(s), f(x(s), x(s-\tau))
-f(z_{\tr}(s), z_{\tr}(s-\tau))\big\rangle\nn\\
&+(r-1)\big|g(x(s), x(s-\tau))-g(z_{\tr}(s), z_{\tr}(s-\tau))\big|^2\nn\\
\leq&2\langle e(s), f(x(s), x(s-\tau))
-f(\tilde z_{\tr}(s), \tilde z_{\tr}(s-\tau))\big\rangle\nn\\
&+2\langle e(s), f(\tilde z_{\tr}(s), \tilde z_{\tr}(s-\tau))
-f( z_{\tr}(s), z_{\tr}(s-\tau))\big\rangle\nn\\
 &+(1\!+\kappa_2)(r-1)|g(x(s), x(s-\tau))\!
 -g(\tilde{z}_{\tr}(s), \tilde z_{\tr}(s-\tau))|^2 \nn\\
&+ \Big(1+\frac{1}{\kappa_2}\Big)(r-1)|
g(\tilde{z}_{\tr}(s), \tilde z_{\tr}(s-\tau))\!-g(z_{\tr}(s), z_{\tr}(s-\tau))|^2\nn\\
\leq&2\langle e(s), f(x(s), x(s-\tau))
-f(\tilde z_{\tr}(s), \tilde z_{\tr}(s-\tau))\big\rangle\nn\\
&+\beta|g(x(s), x(s-\tau))\!
 -g(\tilde{z}_{\tr}(s), \tilde z_{\tr}(s-\tau))|^2 \nn\\
 &+2K_4|e(s)|(|\tilde z_{\tr}(s)-z_{\tr}(s)|+|\tilde z_{\tr}(s-\tau)-z_{\tr}(s-\tau)|)
 (1+|\tilde z_{\tr}(s)|^{\alpha}\nn\\
 &+|\tilde z_{\tr}(s-\tau)|^{\alpha}+|z_{\tr}(s)|^{\alpha}+| z_{\tr}(s-\tau)|^{\alpha})\nn\\
 &+(1\!+\frac{1}{\kappa_2})(r-1)K_4(|\tilde z_{\tr}(s)-z_{\tr}(s)|^2+|\tilde z_{\tr}(s-\tau)-z_{\tr}(s-\tau)|^2)(1+|\tilde z_{\tr}(s)|^{\alpha}\nn\\
 &+|\tilde z_{\tr}(s-\tau)|^{\alpha}+|z_{\tr}(s)|^{\alpha}+| z_{\tr}(s-\tau)|^{\alpha}).
\end{align*}
Inserting the above inequality into \eqref{ss27} and using  (H6), we derive
\begin{align}\label{S4.5}
&(|e(T \wedge \chi_{ \tr}) |^{r} ) \nn\\
  \leq & \frac{r}{2}   \int_0^{T \wedge \chi_{ \tr} }
   \Big[ K_5|e(s)|^{r}+K_5|e(s-\tau)|^{r}
   -\hat V(x(s),\tilde z(s))\nn\\
   &+\hat V(x(s-\tau),\tilde z_{\tr}(s-\tau))
   +2K_4 |e(s)|^{r-1}|\tilde z_{\tr}(s)- z_{\tr}(s)|
   (1+|\tilde z_{\tr}(s)|^{\alpha}\nn\\
   &+|\tilde z_{\tr}(s-\tau)|^{\alpha}
   +|z_{\tr}(s)|^{\alpha}+|z_{\tr}(s-\tau)|^{\alpha})\nn\\
   &+2K_4 |e(s)|^{r-1}|\tilde z_{\tr}(s-\tau)- z_{\tr}(s-\tau)|
   (1+|\tilde z_{\tr}(s)|^{\alpha}+|\tilde z_{\tr}(s-\tau)|^{\alpha}
   +|z_{\tr}(s)|^{\alpha}\nn\\
   &+|z_{\tr}(s-\tau)|^{\alpha})+ K_4\Big(1+\frac{1}{\kappa_2}\Big)
   (r-1)|e(s)|^{r-2}|\tilde z_{\tr}(s) -z_{\tr}(s)|^{2}
   (1+|\tilde z_{\tr}(s)|^{\alpha}\nn\\
   &+|\tilde z_{\tr}(s-\tau)|^{\alpha}+|z_{\tr}(s)|^{\alpha}+|z_{\tr}(s-\tau)|^{\alpha})\nn\\
   &+ K_4\Big(1+\frac{1}{\kappa_2}\Big)
   (r-1)|e(s)|^{r-2}|\tilde z_{\tr}(s-\tau) -z_{\tr}(s-\tau)|^{2}
   (1+|\tilde z_{\tr}(s)|^{\alpha}\nn\\
   &+|\tilde z_{\tr}(s-\tau)|^{\alpha}+|z_{\tr}(s)|^{\alpha}+|z_{\tr}(s-\tau)|^{\alpha})\Big]\mathrm{d}s\nn\\
   & + \int_0^{T\wedge\chi_\tr}r|e(s)|^{r-2}
  \langle e(s), g(x(s), x(s-\tau)) -g(z_{\tr}(s), z_{\tr}(s-\tau))\big\rangle\mathrm{d}W(s).
\end{align}
Owing to $\hat V(x,x)=0$ for any $x\in\RR^d$,  we have
\begin{align}\label{S4.6}
& \int_0^{T \wedge \chi_{ \tr} }-\hat V(x(s),\tilde z_{\tr}(s))+\hat V(x(s-\tau),\tilde z_{\tr}(s-\tau))\mathrm{d}s\nn\\
\leq&\int_{-\tau}^{0}\hat V(x(s),\tilde z_{\tr}(s))\mathrm{d}s
=\int_{-\tau}^{0}\hat V(\xi(s),\xi(s))\mathrm{d}s
=0.
\end{align}
By (\ref{S4.5}), (\ref{S4.6}),  the Young inequality and the elementary inequality, we  yield
 \begin{align}\label{ss28}
&\E(|e(T \wedge \chi_{ \tr}) |^{r} ) \nn\\
\leq &\big (rK_5+2K_4(r-1)+K_4(1+\frac{1}{\kappa_2})(r-1)(r-2) \big)\E  \int_0^{T \wedge \chi_{ \tr} }
|e(s)|^{r}\mathrm{d}s\nn\\
&+K_4 \E  \int_0^{T}
|\tilde z_{\tr}(s) -z_{\tr}(s)|^{r}(1+|\tilde z_{\tr}(s)|^{\alpha}+|\tilde z_{\tr}(s-\tau)|^{\alpha}+|z_{\tr}(s)|^{\alpha}+|z_{\tr}(s-\tau)|^{\alpha})^{r}\mathrm{d}s\nn\\
&+K_4\E  \int_0^{T}
|\tilde z_{\tr}(s-\tau) -z_{\tr}(s-\tau)|^{r}(1+|\tilde z_{\tr}(s)|^{\alpha}+|\tilde z_{\tr}(s-\tau)|^{\alpha}+|z_{\tr}(s)|^{\alpha}\nn\\
&+|z_{\tr}(s-\tau)|^{\alpha})^{r}\mathrm{d}s\nn\\
&+K_4(1+\frac{1}{\kappa_2})(r-1)
 \E  \int_0^{T}|\tilde z_{\tr}(s) -z_{\tr}(s)|^{r}(1+|\tilde z_{\tr}(s)|^{\alpha}+|\tilde z_{\tr}(s-\tau)|^{\alpha}\nn\\
 &+|z_{\tr}(s)|^{\alpha}+|z_{\tr}(s-\tau)|^{\alpha})^\frac{r}{2}\mathrm{d}s\nn\\
 &+K_4(1+\frac{1}{\kappa_2})(r-1)
 \E  \int_0^{T}|\tilde z_{\tr}(s-\tau) -z_{\tr}(s-\tau)|^{r}(1+|\tilde z_{\tr}(s)|^{\alpha}+|\tilde z_{\tr}(s-\tau)|^{\alpha}\nn\\
 &+|z_{\tr}(s)|^{\alpha}+|z_{\tr}(s-\tau)|^{\alpha})^\frac{r}{2}\mathrm{d}s\nn\\
 \leq&C\E  \int_0^{T \wedge \chi_{ \tr} }|e(s)|^{r}\mathrm{d}s+J_1+J_2,
 \end{align}
 where
 \begin{align*}
 J_1:=&C\E  \int_0^{T}|\tilde z_{\tr}(s) -z_{\tr}(s)|^{r}(1+|\tilde z_{\tr}(s)|^{r\alpha}+|\tilde z_{\tr}(s-\tau)|^{r\alpha}+|z_{\tr}(s)|^{r\alpha}\nn\\
 &+|z_{\tr}(s-\tau)|^{r\alpha})\mathrm{d}s,\nn\\
 J_2:=&C\E  \int_0^{T}|\tilde z_{\tr}(s-\tau) -z_{\tr}(s-\tau)|^{r}(1+|\tilde z_{\tr}(s)|^{r\alpha}+|\tilde z_{\tr}(s-\tau)|^{r\alpha}+|z_{\tr}(s)|^{r\alpha}\nn\\
& +|z_{\tr}(s-\tau)|^{r\alpha})\mathrm{d}s.
\end{align*}
By H\"{o}lder's inequality, Theorem \ref{th3}, Lemma \ref{lemma+1},  and  Lemma \ref{lemma+2}, we have
\begin{align}\la{ss29}
J_1\leq& C \int_0^{T}(\E| \tilde z_{\tr}(s)- z_{\tr}(s)|^{2r})^{\frac{1}{2}}
(1+\E|\tilde z_{\tr}(s)|^{2r\alpha}+\E|\tilde z_{\tr}(s-\tau)|^{2r\alpha}\nn\\
 &+\E|z_{\tr}(s)|^{2r\alpha}+\E|z_{\tr}(s-\tau)|^{2r\alpha})^{\frac{1}{2}}\mathrm{d}s\nn\\
\leq&C\int_0^{T}\tr^{\frac{r}{2}}
\big[1+(\E|\tilde z_{\tr}(s)|^{q})^\frac{2r\alpha}{q}+(\E|z_{\tr}(s)|^{q})^\frac{2r\alpha}{q}+
(\E|\tilde z_{\tr}(s-\tau)|^{q})^\frac{2r\alpha}{q}\nn\\
&+(\E|z_{\tr}(s-\tau)|^{q})^\frac{2r\alpha}{q}\big]^\frac{1}{2} \mathrm{d}s\leq C\tr^{\frac{r}{2}}.
\end{align}
By the same way as $J_1$,  together with  (H4), we obtain
\begin{align}\label{S4.2}
J_2\leq&C\int_0^{T}(\E| \tilde z_{\tr}(s-\tau)- z_{\tr}(s-\tau)|^{2r})^{\frac{1}{2}}
\big[1+(\E|\tilde z_{\tr}(s)|^{q})^\frac{2r\alpha}{q}\nn\\
&+(\E|\tilde z_{\tr}(s-\tau)|^{q})^\frac{2r\alpha}{q}+(\E|z_{\tr}(s)|^{q})^\frac{2r\alpha}{q}+(\E|z_{\tr}(s-\tau)|^{q})^\frac{2r\alpha}{q}\big]^\frac{1}{2} \mathrm{d}s\nn\\
\leq&C\int_0^{T}(\E| \tilde z_{\tr}(s)- z_{\tr}(s)|^{2r})^{\frac{1}{2}}\mathrm{d}s+C\int_{-\tau}^{0}| \tilde z_{\tr}(s)- z_{\tr}(s)|^{r}\mathrm{d}s\nn\\
\leq&C\tr^{\frac{r}{2}}+C\int_{-\tau}^{0}|  \xi(s)- \xi([\frac{s}{\tr}]\tr)|^{r}\mathrm{d}s\nn\\
\leq&\tr^{\frac{r}{2}}+K_3^rC\int_{-\tau}^{0} |s-[\frac{s}{\tr}]\tr|^{\lambda r}\mathrm{d}s
\leq C\tr^{\frac{r}{2}}.
\end{align}
Inserting (\ref{ss29}), (\ref{S4.2}) into \eqref{ss28} and  applying the Gronwall inequality
 yields that
\begin{align}\la{ss17}
\E(|e(T \wedge \chi_{ \tau}) |^{r} )
\leq Ce^{CT}\tr^{r/2}.
\end{align}
Subsituting (\ref{cond-19}), (\ref{sss4.44}) and (\ref{ss17}) into (\ref{cond-13}), we  get the desired assertion.
\end{proof}

\begin{theorem}\label{th5}
 Assume that $(\textup{H}2)$, $(\textup{H}4)$-$(\textup{H}6)$ hold.
 Then  for any $\bar r\in(0,r]$, the TEM scheme $z_{\tr}(t)$ defined
 in \eqref{s3.7}
 has the property
\begin{align*}
\E \big|x(T)- z_{\tr}(T)\big|
^{\bar r}\leq C\tr^{\frac{\bar r}{2}}, ~~~~\forall~T>0.
\end{align*}
\end{theorem}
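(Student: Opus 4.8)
The plan is to route the error through the auxiliary process $\tilde z_{\tr}(t)$ introduced in \eqref{s4.41} and split
$$x(T)-z_{\tr}(T)=\big(x(T)-\tilde z_{\tr}(T)\big)+\big(\tilde z_{\tr}(T)-z_{\tr}(T)\big)=e(T)+\big(\tilde z_{\tr}(T)-z_{\tr}(T)\big).$$
The elementary inequality then gives $|x(T)-z_{\tr}(T)|^{\bar r}\le 2^{\bar r}\big(|e(T)|^{\bar r}+|\tilde z_{\tr}(T)-z_{\tr}(T)|^{\bar r}\big)$, so it suffices to bound each of the two moments on the right-hand side by $C\tr^{\bar r/2}$ and then add.

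For the first piece I would simply quote \lemref{lemma+4}, which already delivers $\E|e(T)|^{r}\le C\tr^{r/2}$. When $\bar r=r$ this is precisely the required bound; when $0<\bar r<r$, Jensen's inequality (equivalently Hölder with exponent $r/\bar r$) yields $\E|e(T)|^{\bar r}\le\big(\E|e(T)|^{r}\big)^{\bar r/r}\le C\tr^{\bar r/2}$.

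For the second piece I would invoke \lemref{lemma+1} with the choice $\tilde r=\bar r$, which gives $\E|\tilde z_{\tr}(T)-z_{\tr}(T)|^{\bar r}\le C\tr^{\bar r/2}$ as soon as $\bar r\le 2q/(\alpha+2)$. This range restriction holds automatically under our hypotheses: by assumption $\bar r\le r$, while (H6) forces $r\le q/(\alpha+3)$, and $q/(\alpha+3)\le 2q/(\alpha+2)$ for every $\alpha>0$ since it reduces to $\alpha+2\le 2(\alpha+3)$. Hence \lemref{lemma+1} applies and the bound follows.

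Combining the two estimates immediately produces $\E|x(T)-z_{\tr}(T)|^{\bar r}\le C\tr^{\bar r/2}$ for all $T>0$, which is the assertion. Since the substantive analytic work has already been absorbed into \lemref{lemma+1} and \lemref{lemma+4}, this final step amounts to a triangle-inequality assembly rather than a genuine obstacle; the only points that demand a little care are verifying the exponent constraint $\bar r\le 2q/(\alpha+2)$ that makes \lemref{lemma+1} applicable, and correctly downgrading the $r$th-moment bound of \lemref{lemma+4} to the $\bar r$th moment by Jensen.
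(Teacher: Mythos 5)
Your proposal is correct and follows essentially the same route as the paper: the paper likewise splits $x(T)-z_{\tr}(T)$ through the auxiliary process $\tilde z_{\tr}(T)$, bounds the two pieces by \lemref{lemma+4} and \lemref{lemma+1} respectively, and then invokes H\"{o}lder's inequality to pass from the $r$th moment to any $\bar r\in(0,r]$. The only cosmetic difference is ordering — the paper establishes the bound at exponent $r$ first and downgrades at the end, whereas you split at exponent $\bar r$ directly and apply Jensen piecewise — and your explicit verification of the constraint $\bar r\leq 2q/(\alpha+2)$ needed for \lemref{lemma+1} is a point the paper leaves implicit.
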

\begin{proof}\textbf{Proof.}
For any $T>0$, by (\ref{s4.42}) and (\ref{s4.46}), we  obtain
\begin{align*}
\E \big|x(T)- z_{\tr}(T)\big|^{r}
\leq2^{r}\E \big|x(T)- \tilde z_{\tr}(T)\big|^{r}
+2^{r}\E \big|\tilde z_{\tr}(T)- z_{\tr}(T)\big|^{r}
\leq C\tr^{\frac{r}{2}},
\end{align*}
which together with the  H\"{o}lder inequality implies the desired.
\end{proof}
\subsection{Exponential stability}\la{ES}
This section focuses on the exponential stability of SDDE \eqref{s2.2}. We firstly give the corresponding results on the exact solutions. Then we construct a more precise scheme to approximate the long-time behaviors of the system.
Without loss of generality,  we assume  $f(\mathbf{0},\mathbf{0})=0,~g(\mathbf{0},\mathbf{0})=0 $.
Moreover, 

 \textbf{(H7)} Assume that there exist  constants $\bar K_6>K_6>0$,~$ \bar K_7>K_7\geq0 $ and a function $V_2(\cdot)\in\mathcal{C}(\RR^d;   ~\RR_+)$ such that for any $ ~x,~y \in\RR^{d}$,
\begin{align}\label{s2.9}
\langle 2x, f(x,y)\rangle+|g(x,y)|^2
\leq -\bar K_6|x|^{2}+ K_6|y|^{2}-\bar K_7V_2(x)+ K_7V_2(y)
.
\end{align}

{
\textbf{(H8)} For any positive  constant $l_2$, there exist a positive constant $\hat L_{l_2}$ such that for any $|y|\leq l_2$
\begin{align*}
|f(\textbf 0,y)|+|g(\textbf 0,y)|\leq\hat L_{l_2}|y|.
\end{align*}}

  Using the  techniques of \cite[Theorem 3.4]{Mao2005} and \cite[Theorem 2.1]{li_mao2012}, we may get the exponential stability of SDDE \eqref{s2.2}.
\begin{theorem}\la{th2}
Assume that  $(\textup{H}1)$ and $(\textup{H}7)$ 
hold. Then 
 the solution $x(t)$ of   SDDE \eqref{s2.2}
 with an initial data $\xi\in{}\mathcal{C}([-\tau,~0]; ~\RR^d)$ has the property
  \begin{align*}
   \E|x(t)|^{2 } \leq Ce^{-\gamma t},~~~~\forall ~t>0,
   \end{align*}
   where $\gamma$ satisfies $ K_6e^{\gamma\tau}+\gamma\leq \bar K_6$ and
 $ K_7e^{\gamma\tau}\leq \bar K_7$.
  \end{theorem}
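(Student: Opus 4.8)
The plan is to run a Lyapunov--Khasminskii argument with the simplest possible Lyapunov function $U(x)=|x|^2$, for which the operator in \eqref{s2.1} reduces to $\mathcal{L}U(x,y)=\langle 2x,f(x,y)\rangle+|g(x,y)|^2$, precisely the left-hand side of \eqref{s2.9}. First I would record that a decay rate $\gamma>0$ of the required type exists: at $\gamma=0$ the two constraints read $K_6\leq\bar K_6$ and $K_7\leq\bar K_7$, both strict by $(\textup{H}7)$, so by continuity they persist for all sufficiently small $\gamma>0$. Existence of a global solution and finiteness of its second moment on every $[0,T]$ follow from \thmref{th1}, because $(\textup{H}7)$ yields a bound of the form \eqref{s2.4} with $q=2$: indeed $(q-1)\vee1=1$ there, so the left-hand side equals $\mathcal{L}U$, and one bounds $K_6|y|^2\leq K_6(1+|x|^2+|y|^2)$, discards the nonpositive $-\bar K_6|x|^2$, and uses $\bar K_7>K_7$ to write the $V_2$-terms as $-K_7V_2(x)+K_7V_2(y)$; thus $(\textup{H}1)$--$(\textup{H}2)$ hold with $K_1=K_6$, $K_2=K_7$, $V_1=V_2$.

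The core computation is to apply It\^o's formula to $e^{\gamma t}|x(t)|^2$ and take expectations, first localizing with $\vartheta_n=\inf\{t\geq0:|x(t)|\geq n\}$ so that the stochastic integral has zero mean, and later sending $n\to\infty$. Using \eqref{s2.9} on the drift gives, for every $t>0$,
\begin{align*}
e^{\gamma t}\E|x(t)|^2
\leq&\,|\xi(0)|^2+\E\int_0^{t}e^{\gamma s}\Big[(\gamma-\bar K_6)|x(s)|^2+K_6|x(s-\tau)|^2\\
&\,-\bar K_7 V_2(x(s))+K_7 V_2(x(s-\tau))\Big]\mathrm ds.
\end{align*}

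The decisive step is the treatment of the two delayed integrals. For each I would substitute $u=s-\tau$ and split the resulting range $[-\tau,t-\tau]$ into $[-\tau,0]$ and $[0,t-\tau]\subseteq[0,t]$, which yields
\begin{align*}
\E\int_0^{t}e^{\gamma s}K_6|x(s-\tau)|^2\mathrm ds
\leq K_6 e^{\gamma\tau}\int_{-\tau}^{0}e^{\gamma u}|\xi(u)|^2\mathrm du
+K_6 e^{\gamma\tau}\E\int_0^{t}e^{\gamma u}|x(u)|^2\mathrm du,
\end{align*}
and analogously for the $V_2$-integral with $K_7$ and $V_2(\xi(u))$ in place of $K_6$ and $|\xi(u)|^2$. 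Substituting these back, the coefficient of $\int_0^{t}e^{\gamma s}\E|x(s)|^2\mathrm ds$ becomes $\gamma-\bar K_6+K_6 e^{\gamma\tau}\leq0$ by the first constraint on $\gamma$, and the coefficient of $\int_0^{t}e^{\gamma s}\E V_2(x(s))\mathrm ds$ becomes $-\bar K_7+K_7 e^{\gamma\tau}\leq0$ by the second; hence both running integrals may be dropped, leaving only the $\xi$-dependent boundary data:
\begin{align*}
e^{\gamma t}\E|x(t)|^2
\leq |\xi(0)|^2+K_6 e^{\gamma\tau}\int_{-\tau}^{0}e^{\gamma u}|\xi(u)|^2\mathrm du
+K_7 e^{\gamma\tau}\int_{-\tau}^{0}e^{\gamma u}V_2(\xi(u))\mathrm du=:C.
\end{align*}

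Finally I would pass to the limit $n\to\infty$, using the second-moment bound from \thmref{th1} together with Fatou's lemma to recover $e^{\gamma t}\E|x(t)|^2\leq C$, which is exactly the claimed estimate $\E|x(t)|^2\leq Ce^{-\gamma t}$. \textbf{The main obstacle} is essentially bookkeeping: making the time-shift $u=s-\tau$ in the delayed integrals rigorous so that the delayed contributions are genuinely absorbed with the factor $e^{\gamma\tau}$, and confirming that the two constraints on $\gamma$ are precisely what force both present-time coefficients to be nonpositive; the localization and the limiting argument are routine once \thmref{th1} supplies the requisite integrability.
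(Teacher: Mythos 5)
Your proposal is correct, and it is essentially the paper's approach: the paper gives no self-contained argument but simply invokes the techniques of \cite[Theorem 3.4]{Mao2005} and \cite[Theorem 2.1]{li_mao2012}, and your argument---applying It\^{o}'s formula to $e^{\gamma t}|x(t)|^2$ with localization, using \eqref{s2.9}, and time-shifting the delayed integrals so that the constraints $K_6e^{\gamma\tau}+\gamma\leq \bar K_6$ and $K_7e^{\gamma\tau}\leq \bar K_7$ make the present-time coefficients nonpositive---is exactly that technique specialized to $U(x)=|x|^2$. Your preliminary reduction showing that $(\textup{H}7)$ implies $(\textup{H}2)$ with $q=2$, $K_1=K_6$, $K_2=K_7$, $V_1=V_2$, so that Theorem \ref{th1} supplies global existence and the integrability needed for the localization/Fatou step, is also sound and fills in a point the paper leaves implicit.
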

\begin{theorem}\la{th8}
Assume that  $(\textup{H}1)$ and $(\textup{H}7)$ hold. Then
 the solution $x(t)$ of   SDDE \eqref{s2.2}
 with an initial data $\xi\in{}\mathcal{C}([-\tau,~0]; ~\RR^d)$ has the property
  \begin{align*}
   \limsup_{t\rightarrow \infty}\frac{1}{t}\log|x(t)| \leq - \frac{\gamma}{2} ,~~~\hbox{a.s},
   \end{align*}
   where $ \gamma$ is defined in Theorem \textup{\ref{th2}}.
  \end{theorem}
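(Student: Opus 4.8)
The plan is to follow the technique of Li--Mao \cite{li_mao2012} and deduce the sample-path decay directly from a Lyapunov functional together with the nonnegative semimartingale convergence theorem \cite{Mao2007}, rather than passing through interval-supremum estimates and a Borel--Cantelli argument. I deliberately avoid the latter route: since $(\textup{H}7)$ controls only the combination $\langle 2x,f(x,y)\rangle+|g(x,y)|^2$ from above and gives no individual bound on $|g|$, a Burkholder--Davis--Gundy estimate of $\E[\sup_{k\le t\le k+1}|x(t)|^2]$ cannot be closed with the second moment alone, whereas the semimartingale approach never needs such suprema. Note also that the mean-square conclusion of Theorem~\ref{th2} is not strictly required here; the same dissipativity mechanism drives both proofs.

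First I would apply the It\^o formula to $e^{\gamma t}|x(t)|^2$, with the same $\gamma$ as in Theorem~\ref{th2}, giving a drift integrand $e^{\gamma s}\big(\gamma|x(s)|^2+\langle 2x(s),f(x(s),x(s-\tau))\rangle+|g(x(s),x(s-\tau))|^2\big)$ plus a continuous local martingale $M_t$ with $M_0=0$. The delayed quantities $|x(s-\tau)|^2$ and $V_2(x(s-\tau))$ produced by $(\textup{H}7)$ are the obstruction, so I would absorb them into the functional
\[
U(t):=e^{\gamma t}|x(t)|^2+K_6 e^{\gamma\tau}\int_{t-\tau}^{t}e^{\gamma u}|x(u)|^2\,\mathrm du+K_7 e^{\gamma\tau}\int_{t-\tau}^{t}e^{\gamma u}V_2(x(u))\,\mathrm du .
\]
Differentiating the two compensating integrals yields the terms $-K_6 e^{\gamma s}|x(s-\tau)|^2$ and $-K_7 e^{\gamma s}V_2(x(s-\tau))$, which exactly cancel the delayed contributions coming from $(\textup{H}7)$, while the surviving non-delayed terms carry the coefficients $\gamma-\bar K_6+K_6 e^{\gamma\tau}$ and $-\bar K_7+K_7 e^{\gamma\tau}$, both nonpositive by the defining inequalities $K_6 e^{\gamma\tau}+\gamma\le\bar K_6$ and $K_7 e^{\gamma\tau}\le\bar K_7$ of $\gamma$. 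Hence the whole finite-variation integrand is pointwise $\le 0$, and It\^o's formula gives the exact representation $U(t)=U(0)-B_t+M_t$, where $B_t$ is a continuous nondecreasing process with $B_0=0$ and $M_t$ is a continuous local martingale.

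Since every summand of $U$ is nonnegative, $U(t)\ge e^{\gamma t}|x(t)|^2\ge 0$, and the decomposition $U(t)=U(0)-B_t+M_t$ has no increasing part, the nonnegative semimartingale convergence theorem \cite{Mao2007} shows that $\lim_{t\to\infty}U(t)$ exists and is finite almost surely, and in particular $\sup_{t\ge 0}e^{\gamma t}|x(t)|^2<\infty$ a.s. Taking logarithms then yields $\limsup_{t\to\infty}\tfrac1t\log\big(e^{\gamma t}|x(t)|^2\big)\le 0$, i.e. $\limsup_{t\to\infty}\tfrac1t\log|x(t)|\le-\gamma/2$ a.s., which is the assertion. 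The main obstacle is purely the bookkeeping in the second step: choosing the weights $K_6 e^{\gamma\tau}$ and $K_7 e^{\gamma\tau}$ on the delay integrals so that, after invoking $(\textup{H}7)$, the delayed terms cancel and the remaining coefficients are made nonpositive precisely by the constraints on $\gamma$; once this functional is correctly designed, the convergence theorem and the rate extraction are routine.
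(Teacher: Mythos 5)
Your proof is correct: the compensated functional $U(t)$ makes the delayed terms cancel exactly, the surviving coefficients $\gamma-\bar K_6+K_6e^{\gamma\tau}$ and $K_7e^{\gamma\tau}-\bar K_7$ are nonpositive by the defining constraints on $\gamma$, and the nonnegative semimartingale convergence theorem then gives $\sup_{t\ge0}e^{\gamma t}|x(t)|^2<\infty$ a.s., from which the rate $-\gamma/2$ follows. This is essentially the paper's own approach: the paper omits the proof and defers to the technique of Li--Mao \cite{li_mao2012}, which is precisely the Lyapunov-functional-plus-semimartingale-convergence argument you carried out (rather than the moment-bound/Borel--Cantelli route, which, as you note, would require a separate bound on $|g|$ that $(\textup{H}7)$ does not supply).
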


Next we will give a more precise
numerical method keeping the underlying exponential stability  in  mean square and $\PP-1$.
~Under (H1) and (H8),  choose a strictly increasing continuous function $\hat\Phi : [1, \infty)\rightarrow \RR_+$ 
such that
\be\la{s3.23}
\sup_{|x|\vee |y|\leq l} \dis\left(\frac{ |f (x,y)|}{|x|+ 1\wedge|y|}\vee
\frac{| g(x, y)|^2 }{(|x|+1\wedge|y|)^2}\right)\leq \hat\Phi(l),~~~\forall~l\geq1.
\ee
For any given stepsize $\triangle\in(0,1]$, by \eqref{s3.4} we may take
\begin{align}\label{s3.26}
 h_{\hat\Phi,\mu}(\triangle )=\hat K\triangle^{- \mu},
\end{align}
 where $\hat K:=\hat\Phi(\|\xi\|\vee 1)$ and  $ \mu\in(0,\frac{1}{2})$.
Then the more precise TEM scheme is defined by
 \begin{align}\la{s3.28}
\left\{
\begin{array}{lll}
y_{i}^\tr=\xi(i\tr ),~\forall ~i=-N,\cdots, 0,&\\
\hat y_{i+1}^\tr=y_{i}^\tr+f( y_{i}^\tr, y_{i-N}^\tr)\tr +g( y_{i}^\tr,  y_{i-N}^\tr)\tr W_i,~\forall ~i=0,1,\cdots,& \\
  y_{i+1}^\tr=\Upsilon_{\hat\Phi,\mu}^\triangle (\hat y_{i+1}^\tr).&\\
\end{array}
\right.
\end{align}
So we have
\begin{align}\la{s3.27}
 \big|f(y_{i}^\tr, y_{i-N}^\tr)\big|
\leq& h_{\hat\Phi,\mu}(\tr)\big(|y_{i}^\tr|+1\wedge|y_{i-N}^\tr|\big), ~~~~~\forall ~x,~y\in\RR^d.
 \end{align}
 \begin{theorem}\la{th6}
Assume that  $(\textup{H}1)$, $(\textup{H}7)$ and $(\textup{H}8)$ hold. Then for any $\varepsilon\in(0, \gamma)$, there is $\bar \tr\in(0,1]$  such that 
for any $\tr \in(0, \bar\tr]$
  \be\la{Theorem+2e}
   \E| y_i^{\tr}|^{2 } \leq Ce^{(\gamma-\varepsilon) t_i},
   \ee
   where $\gamma$ is defined in Theorem \textup{\ref{th2}}.
  \end{theorem}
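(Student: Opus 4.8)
The plan is to transplant the continuous-time Lyapunov/Razumikhin argument underlying Theorem \ref{th2} to the discrete scheme \eqref{s3.28}: I would first extract a one-step conditional second-moment inequality governed by (H7), convert it into a closed recursion for $u_i:=\E|y_i^\tr|^2$ and $v_i:=\E V_2(y_i^\tr)$, and then attach the exponential weight $e^{\bar\gamma t_i}$ with $\bar\gamma:=\gamma-\varepsilon$ and sum, so that the delay terms are absorbed exactly as the shift $s\mapsto s-\tau$ is absorbed in the continuous proof. The target is the mean-square exponential decay $\E|y_i^\tr|^2\le Ce^{-(\gamma-\varepsilon)t_i}$, matching the decay rate of the exact solution in Theorem \ref{th2}.

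For the one-step estimate, note that the truncation map does not increase the Euclidean norm, so $|y_{i+1}^\tr|\le|\hat y_{i+1}^\tr|$ and it suffices to control $\E[|\hat y_{i+1}^\tr|^2\,|\,\mathcal F_{t_i}]$. Writing $f_i=f(y_i^\tr,y_{i-N}^\tr)$, $g_i=g(y_i^\tr,y_{i-N}^\tr)$ and using the independence of $\tr W_i$ from $\mathcal F_{t_i}$ exactly as in \eqref{s4.4}, the martingale cross-terms drop and
\begin{align*}
\E\big[|\hat y_{i+1}^\tr|^2\,\big|\,\mathcal F_{t_i}\big]
=|y_i^\tr|^2+\big(2\langle y_i^\tr,f_i\rangle+|g_i|^2\big)\tr+|f_i|^2\tr^2.
\end{align*}
I would bound the bracket by (H7) and the last term by \eqref{s3.27} with $h_{\hat\Phi,\mu}(\tr)=\hat K\tr^{-\mu}$, giving $|f_i|^2\tr^2\le2\hat K^2\tr^{2-2\mu}(|y_i^\tr|^2+|y_{i-N}^\tr|^2)$ (using $(1\wedge|y_{i-N}^\tr|)^2\le|y_{i-N}^\tr|^2$). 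Taking expectations yields
\begin{align*}
u_{i+1}\le\big(1-\bar K_6\tr+2\hat K^2\tr^{2-2\mu}\big)u_i+\big(K_6\tr+2\hat K^2\tr^{2-2\mu}\big)u_{i-N}-\bar K_7\tr v_i+K_7\tr v_{i-N}.
\end{align*}

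Next I would set $U_i=e^{\bar\gamma t_i}u_i$, $W_i=e^{\bar\gamma t_i}v_i$, multiply the recursion by $e^{\bar\gamma t_{i+1}}$ and use $t_{i+1}=t_{i-N}+\tau+\tr$ (as $N\tr=\tau$) to rewrite the delayed contributions as $e^{\bar\gamma(\tau+\tr)}U_{i-N}$ and $e^{\bar\gamma(\tau+\tr)}W_{i-N}$. Summing over $i=0,\dots,n-1$, reindexing the delayed sums and bounding the finitely many initial terms by a constant depending on the initial data, the diagonal $U$-sum telescopes to
\begin{align*}
U_n\le U_0+(A+B-1)\sum_{i=0}^{n-1}U_i+(Q-P)\sum_{i=0}^{n-1}W_i+C,
\end{align*}
with $A=e^{\bar\gamma\tr}(1-\bar K_6\tr+2\hat K^2\tr^{2-2\mu})$, $B=e^{\bar\gamma(\tau+\tr)}(K_6\tr+2\hat K^2\tr^{2-2\mu})$, $P=\bar K_7\tr e^{\bar\gamma\tr}$, $Q=K_7\tr e^{\bar\gamma(\tau+\tr)}$, and $C<\infty$.

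It then remains to choose $\bar\tr$ so that $A+B-1\le0$ and $Q-P\le0$ for all $\tr\in(0,\bar\tr]$; since $W_i\ge0$ and $U_0=|\xi(0)|^2$, this gives $U_n\le C$, i.e. $\E|y_n^\tr|^2\le Ce^{-(\gamma-\varepsilon)t_n}$. The condition $Q-P\le0$ reduces to $K_7e^{\bar\gamma\tau}\le\bar K_7$, which holds because $\bar\gamma<\gamma$ and $K_7e^{\gamma\tau}\le\bar K_7$ (Theorem \ref{th2}). For $A+B-1\le0$ I would expand
\begin{align*}
A+B-1=e^{\bar\gamma\tr}\Big[1-\big(\bar K_6-K_6e^{\bar\gamma\tau}\big)\tr+2\hat K^2(1+e^{\bar\gamma\tau})\tr^{2-2\mu}\Big]-1,
\end{align*}
and use $\bar K_6-K_6e^{\bar\gamma\tau}>\bar K_6-K_6e^{\gamma\tau}\ge\gamma>\bar\gamma$ to see the leading behaviour is $-(\bar K_6-K_6e^{\bar\gamma\tau}-\bar\gamma)\tr<0$. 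The hard part is precisely this last sign check: the truncation forces the spurious term $2\hat K^2(1+e^{\bar\gamma\tau})\tr^{2-2\mu}$ into the recursion, and it must be dominated by the genuine $O(\tr)$ decay term. This works only because the strict restriction $\mu\in(0,\frac{1}{2})$ in \eqref{s3.26} makes $2-2\mu>1$, so for $\tr$ small the $\tr$-term wins; this determines $\bar\tr$ (uniformly, as all constants are explicit) and completes the argument.
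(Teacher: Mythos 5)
Your proposal is correct, and up through the one-step estimate it coincides with the paper's argument: the conditional identity for $\E\big[|\hat y_{i+1}^\tr|^2\,\big|\,\mathcal{F}_{t_i}\big]$, the application of (H7) to the bracket $2\langle y_i^\tr,f_i\rangle+|g_i|^2$, and the bound on $|f_i|^2\tr^2$ via \eqref{s3.27} are exactly the ingredients of the paper's inequality \eqref{sss4.62}. Where you genuinely diverge is in extracting the exponential decay. The paper sums the \emph{unweighted} recursion, absorbs the delayed sums using the smallness conditions \eqref{so1}, arrives at $\E|y_{i+1}^\tr|^2\le C-(\gamma-\varepsilon)\tr\sum_{k=0}^{i}\E|y_k^\tr|^2$ in \eqref{so2}, and then concludes \eqref{so3} by ``a direct application of the discrete Gronwall inequality.'' You instead attach the weight $e^{\bar\gamma t_i}$ \emph{before} summing, telescope, and reduce the whole problem to the two sign conditions $A+B\le 1$ and $Q\le P$, which you verify for small $\tr$ from $K_6e^{\gamma\tau}+\gamma\le\bar K_6$, $K_7e^{\gamma\tau}\le\bar K_7$ and the strict restriction $\mu<\tfrac12$ in \eqref{s3.26}. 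Your route buys real rigor at precisely the paper's weakest point: an inequality of the form $u_{i+1}\le C-a\sum_{k=0}^{i}u_k$ (for all $i$, with $u_k\ge0$) does \emph{not} by itself imply exponential decay --- for instance $u_k=(k+1)^{-2}$ satisfies it with $C=1+a\pi^2/6$ yet decays only polynomially --- so the paper's final Gronwall step is best read as shorthand for the weighted/recursive argument you carry out explicitly. Your version also makes transparent where each hypothesis enters: $Q\le P$ is $\tr$-free (it is just $K_7e^{\bar\gamma\tau}\le\bar K_7$), while $A+B\le1$ is exactly the competition between the genuine $O(\tr)$ dissipation, of size at least $\varepsilon\tr$ since $\bar K_6-K_6e^{\bar\gamma\tau}-\bar\gamma\ge\varepsilon$, and the truncation-induced $O(\tr^{2-2\mu})$ term, which loses because $2-2\mu>1$; this is the same mechanism as the paper's \eqref{so1}, organized more sharply. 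One cosmetic remark: your target $\E|y_i^\tr|^2\le Ce^{-(\gamma-\varepsilon)t_i}$ carries the minus sign that is evidently intended in \eqref{Theorem+2e} (and appears in the paper's own conclusion \eqref{so3}), so no discrepancy there.
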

\begin{proof}\textbf{Proof.}
Define $f_i=f(y_{i}^\tr, y_{i-N}^\tr),~g_i=g(y_{i}^\tr, y_{i-N}^\tr),~\forall ~i\geq0$.
By (\ref{s3.28})
 \begin{align*}
&\E\Big[\big|\hat y_{i+1}^\tr\big|^2\big|\mathcal{F}_{t_i}\Big]\nn\\
   =&\E\Big[\big| y_{i}^\tr+f_i\tr+ g_i\tr W_i\big|^2\big|\mathcal{F}_{t_i}\Big]\nn\\
   =&\E\Big[\big(|y_{i}^\tr|^2+|f_i|^2\tr^2+ |g_i\tr W_i|^2
   +2\langle y_{i}^\tr,f_i\rangle\tr+2\langle y_{i}^\tr,g_i\tr W_i\rangle+2\langle f_i,g_i\tr W_i\rangle\tr\big)\big|\mathcal{F}_{t_i}\Big]\nn\\
   =&|y_{i}^\tr|^2+|f_i|^2\tr^2+ |g_i|^2\tr+2\langle y_{i}^\tr,f_i\rangle\tr,
\end{align*}
By (H7), \eqref{s3.26} and \eqref{s3.27}, we have
 \begin{align}\label{sss4.62}
&\E\Big[|\hat y_{i+1}^\tr\big|^2|\mathcal{F}_{t_i}\Big]\nn\\
\leq&|y_{i}^\tr|^2+\Big[-\bar K_6|y_{i}^\tr|^{2}+K_6|y_{i-N}^\tr|^{2}
-\bar K_7V_2(y_{i}^\tr)+K_7V_2(y_{i-N}^\tr)\Big]\tr\nn\\
&+  h_{\hat\Phi,\mu}^2(\tr)\big(|y_{i}^\tr|+1\wedge|y_{i-N}^\tr|\big)^2\tr^2\nn\\
\leq&[1-(\gamma\tr-2\hat K\tr^{2(1-\mu)})]|y_{i}^\tr|^2-K_6e^{\gamma\tau}\tr
|y_{i}^\tr|^2\nn\\
&+(K_6\tr+2\hat K\tr^{2(1-\mu)})|y_{i-N}^\tr|^{2}
-K_7e^{\gamma\tau}
\tr V_2(y_{i}^\tr)+K_7\tr V_2(y_{i-N}^\tr),
\end{align}
where constant $\gamma$ satisfies
 $$K_6e^{\gamma\tau}+\gamma\leq \bar K_6,~~~~~
 K_7e^{\gamma\tau}\leq \bar K_7.$$
Then taking the expectations on both sides of \eqref{sss4.62} we derive
\begin{align}\la{sso4_36}
&\E|y_{i+1}^\tr|^2-\E|y_{i}^\tr|^2\leq\E\big[\E(|\hat y_{i+1}^\tr|^2
|\mathcal{F}_{t_i})\big]-\E|y_{i}^\tr|^2\nn\\
\leq&-(\gamma\tr-2\hat K\tr^{2(1-\mu)})\E|y_{i}^\tr|^2-K_6e^{\gamma\tau}\tr\E|y_{i}^\tr|^2\nn\\
&+(K_6\tr+2\hat K\tr^{2(1-\mu)})\E|y_{i-N}^\tr|^{2}-K_7e^{\gamma\tau}\tr
\E V_2(y_{i}^\tr)+K_7\tr \E V_2(y_{i-N}^\tr),
\end{align}
which implies
\begin{align}\label{so2}
 \E|y_{i+1}^\tr|^2
&\leq |\xi(0)|^2-(\gamma\tr-2\hat K\tr^{2(1-\mu)})\sum_{k=0}^{i}\E|y_{k}^\tr|^2
-K_6e^{\gamma\tau}\tr\sum_{k=0}^{i}\E|y_{k}^\tr|^2\\
&+(K_6\tr
+2\hat K\tr^{2(1-\mu)})N\|\xi\|^2+(K_6\tr+2\hat K\tr^{2(1-\mu)})\sum_{k=0}^{(i-N)\vee 0}\E|y_{k}^\tr|^2\nn\\
&-K_7e^{\gamma\tau}\tr\sum_{k=0}^{i}\E V_2(y_{k}^\tr)+
K_7\tau\max_{-N\leq j\leq0}V_2(\xi(t_j))+K_7\tr\sum_{k=0}^{(i-N)\vee 0}\E V_2(y_{k}^\tr).
\end{align}
For any $\varepsilon\in(0,\gamma)$,  choose a constant $\bar \tr\in(0,~1]$
small sufficiently such that for any $\tr\in(0,\bar \tr]$
\begin{align}\label{so1}
&2\hat K\tr^{2(1-\mu)}\leq \varepsilon\tr,~~~~~K_6\tr+2\hat K\tr^{2(1-\mu)}\leq K_6 e^{\gamma\tau}\tr.
\end{align}
This together with (\ref{so2}) implies
\begin{align*}
\E|y_{i+1}^\tr|^2
\leq &-(\gamma-\varepsilon)\tr\sum_{k=0}^{i}\E|y_{k}^\tr|^2+C.
\end{align*}
A direct application of the discrete Gronwall inequatity  derives
\begin{align}\label{so3}
\E|y_{i+1}^\tr|^2
\leq Ce^{-(\gamma-\varepsilon)(i+1)\tr}=Ce^{-(\gamma-\varepsilon)t_{i+1}}.
\end{align}
Therefore  the desired result follows.
\end{proof}

Using the technique of \cite[Theorem 3.4]{wu_mao2013}, we yield the  almost sure exponential stability of the TEM scheme  (\ref{s3.28}).
 \begin{theorem}\la{th7}
Under the conditions of  Theorem {\rm\ref{th6}},  for any $\varepsilon\in(0, \gamma)$, there is $\bar \tr\in(0,1]$  such that 
for any $\tr \in(0, \bar\tr]$
  \be\la{Pas}
   \limsup_{i\rightarrow\infty}\frac{1}{i\tr}\log{| y_i^{\tr}|} \leq- \frac{\gamma-\varepsilon}{2}  ~~\hbox{a.s}.
   \ee
  \end{theorem}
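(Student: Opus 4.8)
The plan is to derive the almost sure exponential decay directly from the mean-square exponential estimate already proved in Theorem~\ref{th6}, by combining Markov's inequality with the first Borel--Cantelli lemma. Throughout I fix $\tr\in(0,\bar\tr]$ and $\varepsilon\in(0,\gamma)$, and I use the moment bound in the decaying form
$
\E|y_i^\tr|^2\le Ce^{-(\gamma-\varepsilon)t_i}
$
that is actually delivered by the final line \eqref{so3} of the proof of Theorem~\ref{th6} (with $t_i=i\tr$), where $C$ may depend on $\tr$ but not on $i$. Since \eqref{Pas} is an assertion for each fixed $\tr$ as $i\to\infty$, such $\tr$-dependence of $C$ is harmless.

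First I would introduce an auxiliary parameter $\eta\in(0,\gamma-\varepsilon)$ together with the decaying threshold $\rho_i:=e^{-\frac{1}{2}(\gamma-\varepsilon-\eta)t_i}$, so that $\rho_i^2=e^{-(\gamma-\varepsilon-\eta)t_i}$. By Markov's inequality and the moment bound,
\begin{align*}
\PP\{|y_i^\tr|>\rho_i\}
=\PP\{|y_i^\tr|^2>\rho_i^2\}
\le \frac{\E|y_i^\tr|^2}{\rho_i^2}
\le \frac{Ce^{-(\gamma-\varepsilon)t_i}}{e^{-(\gamma-\varepsilon-\eta)t_i}}
= Ce^{-\eta t_i}=Ce^{-\eta\tr\, i}.
\end{align*}
Because $t_i=i\tr$ grows linearly in $i$, the right-hand side is a geometric sequence of ratio $e^{-\eta\tr}<1$, and hence $\sum_{i\ge1}\PP\{|y_i^\tr|>\rho_i\}<\infty$.

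Next I would invoke the Borel--Cantelli lemma to conclude that, almost surely, the event $\{|y_i^\tr|>\rho_i\}$ holds for only finitely many $i$; thus there is a random index $i_0(\omega)$ with $|y_i^\tr|\le\rho_i$ for all $i\ge i_0$. Taking logarithms and dividing by $t_i=i\tr$ gives
\begin{align*}
\frac{1}{i\tr}\log|y_i^\tr|\le-\frac{1}{2}(\gamma-\varepsilon-\eta),\qquad i\ge i_0,
\end{align*}
whence $\limsup_{i\to\infty}\frac{1}{i\tr}\log|y_i^\tr|\le-\frac{1}{2}(\gamma-\varepsilon-\eta)$ almost surely. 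Letting $\eta\downarrow0$ along a countable sequence $\eta_k\downarrow0$ (so that the countable union of the associated exceptional null sets remains null) then yields the desired bound \eqref{Pas}.

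The argument is essentially routine, and I do not expect a serious obstacle; the one point meriting care — which I would treat as the crux — is the balancing of exponents. The surviving rate $\eta$ in the tail probability must be kept strictly positive to make the Borel--Cantelli series geometric and hence summable, while the threshold exponent $\frac{1}{2}(\gamma-\varepsilon-\eta)$ must simultaneously be pushed arbitrarily close to the target $\frac{1}{2}(\gamma-\varepsilon)$. Splitting the total decay $(\gamma-\varepsilon)$ into a threshold part and a summable part $\eta$, and then passing to the limit $\eta\downarrow0$, is precisely what reconciles these two competing demands. I would also emphasize that, in contrast to the continuous-time statement of Theorem~\ref{th8}, no pathwise supremum over time intervals is needed here, since \eqref{Pas} concerns only the values $y_i^\tr$ of the scheme at the grid points $t_i$.
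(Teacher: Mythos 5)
Your proof is correct, and it is essentially the argument the paper intends: the paper gives no explicit proof of Theorem~\ref{th7}, merely invoking ``the technique of \cite[Theorem 3.4]{wu_mao2013}'', which is precisely this Markov-inequality-plus-Borel--Cantelli derivation of almost sure exponential decay from the mean-square decay of Theorem~\ref{th6}. You were also right to work from the decaying bound \eqref{so3} actually established in that proof (rather than the sign typo in \eqref{Theorem+2e}), and your $\eta$-splitting with a final limit $\eta\downarrow 0$ along a countable sequence closes the argument cleanly.
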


\section{Numerical examples}\label{NE}
In this section to illustrate our results, we give two nonlinear  SDDE examples.
\begin{expl}\label{example1}
{\rm
Let us recall  SDDE \eqref{intr_exp1}
 ~and let $q=15$. By virtue of   \cite[p.211, Lemma 4.1]{Mao2007} we know
\begin{align*}
|a+b|^{p}\leq \frac{|a|^p}{\delta^{p-1}}+\frac{|b|^p}{(1-\delta)^{p-1}},
\end{align*}
where $a,~b\in \RR,~p>1,\delta\in(0,1)$.  This together with the  Young inequality implies
\begin{align*}
(1+|x|^{2})^{\frac{q}{2}-1}&\Big(\big\langle 2x,f(x,y)\big\rangle +14|g(x,y)|^2\Big)
\leq (2^{6.5}+\frac{14}{\delta^{5.5}}+\frac{14}{(1-\delta)^{5.5}})\nn\\
&+ 2^{7.5}|x|^{15}+\frac{14}{\delta^{5.5}}|y|^{15}-(16-\frac{182}{16(1-\delta)^{5.5}})|x|^{17}+ \frac{42}{16(1-\delta)^{5.5}}|y|^{17}.
\end{align*}
Choose $\delta$ small sufficiently such that $16-\frac{182}{16(1-\delta)^{5.5}}\geq\frac{42}{16(1-\delta)^{5.5}}$. So (H2) holds with $V_1(x)=(16-\frac{140}{16(1-\delta)^{5.5}})|x|^{17}$. By virtue of  Theorem \ref{th1}, (\ref{intr_exp1}) exists a unique global solution.

Let $r=3$ and $\beta>2$. By  the elementary inequality we have
\begin{align*}
2|x-\bar x|\langle x-\bar{x}, f(x, y)-f(\bar{x},\bar y)\rangle
\leq2|x-\bar x|^{3}-8|x-\bar x|^{3}(x^2+\bar x^2).
\end{align*}
Using Young's inequality and  the elementary inequality implies
\begin{align*}
 \beta|x-\bar x||g(x, y)-g(\bar{x}, y)|^2
\leq&\beta|x-\bar x||y-\bar y|^2(|y|^{\frac{1}{2}}+|\bar y|^{\frac{1}{2}})^2\nn\\
\leq&\frac{\beta^3}{3}|x-\bar x|^3+\frac{8}{3}|y-\bar y|^{3}(y^2+\bar y^2)+\frac{16}{3}|y-\bar y|^{3}.
\end{align*}
So we derive
\begin{align*}
&|x-\bar x|[2\langle x-\bar{x}, f(x, y)-f(\bar{x},\bar y)\rangle+\beta|g(x, y)-g(\bar{x}, \bar y)|^2]\nn\\
\leq&(2+\frac{\beta^3}{3})|x-\bar x|^3+\frac{16}{3}|y-\bar y|^{3}-8|x-\bar x|^{3}(x^2+\bar x^2)+8|y-\bar y|^{3}(y^2+\bar y^2).
\end{align*}
This implies (H6) holds with $\hat V(x,\bar x)=8|x-\bar x|^{3}(x^2+\bar x^2)$. Obviously, (H4) and (H5) hold with $\lambda=1, K_4=12$ and $\alpha=2$.
By (\ref{s3.20}), we take  $\Phi^{-1}(l)=(\frac{l}{72})^{\frac{1}{4}},~\forall ~l\geq72$.
By (\ref{ssh}) and $\|\xi\|=1$, then choose  $h(\tr)=72\tr^{-\frac{1}{2}},~\forall~\tr\in(0,1]$.
By  virtue of  Theorem \ref{th5}, the TEM scheme (\ref{s3.7}) satisfies that for any $\tr\in(0,1]$
\begin{align*}
\E \big|x(T)- z_{\tr}(T)\big|
^3\leq C\tr^{\frac{3}{2}},~~~~\forall ~T>0.
\end{align*}

We regard the numerical solution with small stepsize $\tr=2^{-20}$ as the exact solution $x(t)$, and carry out numerical experiments to compute  the error
$\E\left(|x(T)-z_{\tr}(T)|^{3}\right)$ between  the exact solution $x(T)$
and the numerical solution $z_{\tr}(T)$ of the TEM scheme using MATLAB.
 In Figure \ref{err_1}, the red solid line depicts   $\E\left(|x(T)-z_{\tr}(T)|^{3}\right)$ as the  function of $\tr$
for 1000 sample points  as $T=1,~2,~3$, and $\tr\in\{2^{-14},~2^{-12},~2^{-10},~2^{-8},~2^{-6}\}$.
The blue solid line plots  the reference function  $\tr^{\frac{3}{2}}$.
Figure \ref{err_1} supports the  result of  Theorem \ref{th5} that  the rate of $L^{3}$-convergence is $ {3}/{2}$.
 \begin{figure}[htp]
  \begin{center}
\includegraphics[width=14cm,height=8cm]{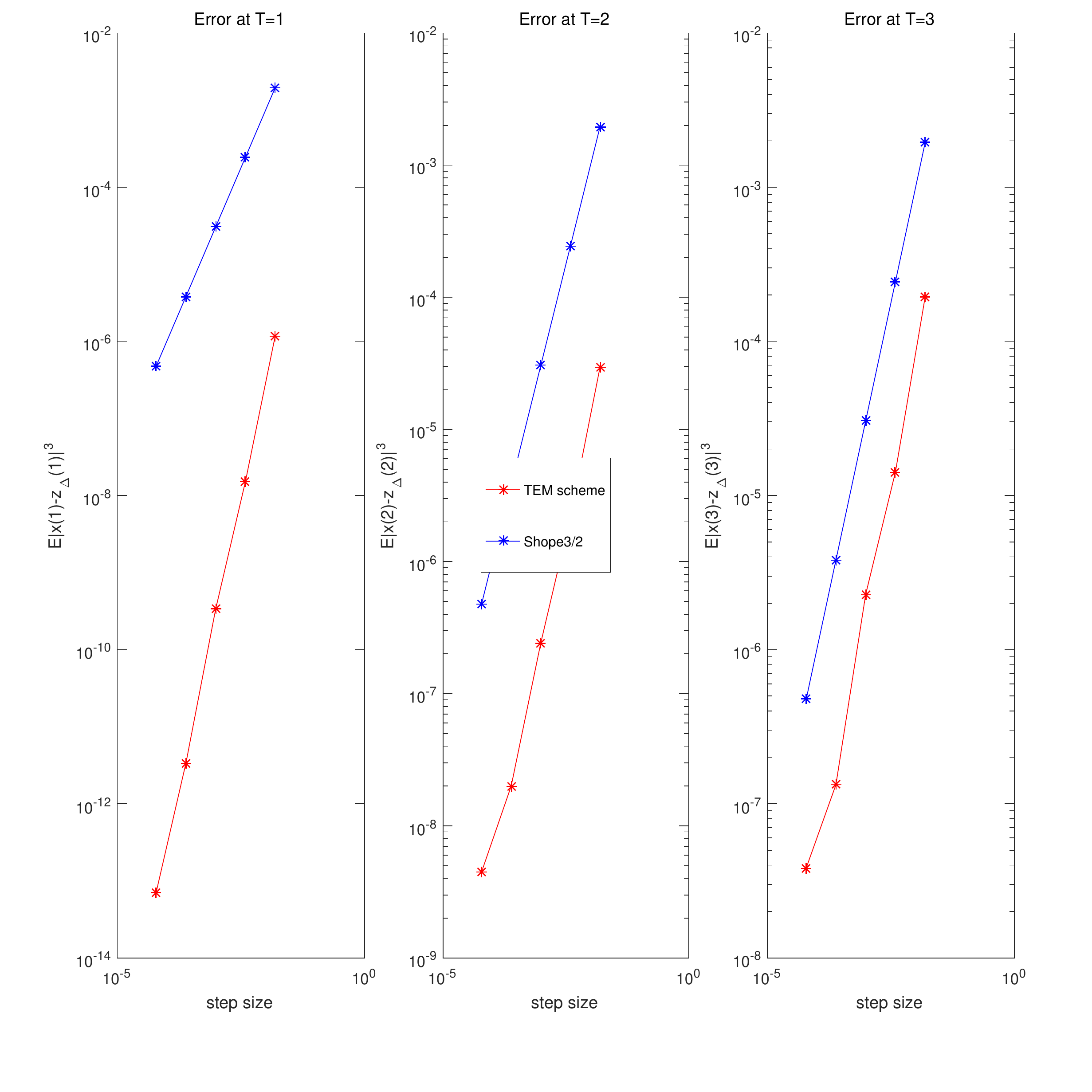}
   \end{center}
 \caption{The red solid line depicts the  $3$th moment approximation error $\E\left(|x(T)-z_{\tr}(T)|^{3}\right)$ between  the exact solution $x(T)$
and the TEM scheme $z_{\tr}(T)$, as the function of $\tr$
for 1000 sample points as $T=1,~2,~3$, and $\tr\in\{2^{-14},~2^{-12},~2^{-10},~2^{-8},~2^{-6}\}$.
The blue solid line plots  the reference  function  $\tr^{\frac{3}{2}}$.  }
\label{err_1}
\end{figure}
}
\end{expl}
\begin{expl}\label{eample2}
{\rm
Consider  2-dimensional SDDE
\begin{align}\la{ex2}
\left\{
\begin{array}{ll}
\mathrm{d}x_1(t)=\left(-\frac{3}{2}x_1(t)-x_1^{3}(t)\right)\mathrm{d}t+x_2^{2}(t-1)\mathrm{d}W_1(t),& \\
\mathrm{d}x_2(t)=\left(-x_2(t)-x_2^{3}(t)\right)\mathrm{d}t+x_1^{2}(t-1)\mathrm{d}W_2(t), ~~t> 0,
\end{array}
\right.
\end{align}
with  the initial data $(\xi_1(\theta),\xi_2(\theta))^T=(e^{-1.3\theta},0)^T, ~\theta\in[-1,~0]$.
We compute that for any $x,~y,~\bar x,~\bar y\in\RR^2$
\begin{align*}
&|f(x,y)-f(\textbf 0,y)|\leq\frac{3}{2} (1+|x|^2)|x|,~~~|g(x,y)-g(\textbf 0,y)|^2\leq 2|y|^4,\nn\\
&f(\textbf 0,y)=0,~~~|g(\textbf 0,y)|^2\leq 2|y|^4,
\end{align*}
and
\begin{align*}
&\langle 2x,f(x,y)\rangle+|g(x,y)|^2 \\
\leq& -2|x|^2-2(|x_1|^4+|x_2|^4)+(|y_1|^4+|y_2|^4).
\end{align*}
Then  (H1), (H7) and (H8) hold with $V_2(x)=|x_1|^4+|x_2|^4$, where $\bar K_6=2$,~$K_6=0.6$,~$\bar K_7=2$,~$K_7=1$.
 Choose $\gamma=0.69$ such that $$  1.8862\approx K_6e^{\gamma\tau}+\gamma<\bar K_6=2,
 ~~~~~1.9937\approx K_7e^{\gamma\tau}<\bar K_7=2.$$
By virtue of  Theorem \ref{th2} and  Theorem \ref{th8}, (\ref{ex2}) is the exponential stable in mean square and $\PP-1$.

By (\ref{s3.23}),  we take
$\hat\Phi(l)=2(1+l^2)$, where $l\geq1$. 
~By  (\ref{s3.26}) and $\|\xi\|=e^{1.3}$, we choose $h_{\hat\Phi,\mu}(\tr)=2(1+e^{2.6}) \tr^{-\frac{1}{100}},~\forall\tr\in(0,1]$.
Let $\varepsilon=0.5\in(0,\gamma)$. we choose $\bar\tr=2^{-7}$ such that for any $\tr\in(0,\bar \tr]$, (\ref{so1}) holds.
It follows from Theorem \ref{th6} and  Theorem \ref{th7} that for any $\tr\in(0, ~2^{-7}]$
$$
   \E|y_{i}^\tr|^{2 } \leq Ce^{-0.19 t_i},~  \forall~ i\geq 0,~~~~
    \limsup_{i\rightarrow\infty}\frac{1}{i\tr}\log{| y_{i}^\tr|} \leq -\frac{0.19}{2}, ~\hbox{a.s}.
  $$
 Figure  \ref{stabplot2} depicts the sample mean of the TEM scheme $y_i^{\tr}$ defined in \eqref{s3.28}.
Figure \ref{stabplot1} depicts a sample path of the EM solution $Y_{i}^{\tr}$  and  a sample path of the TEM solution $y_i^{\tr}$.
\begin{figure}[htp]
  \begin{center}
\includegraphics[width=14cm,height=7cm]{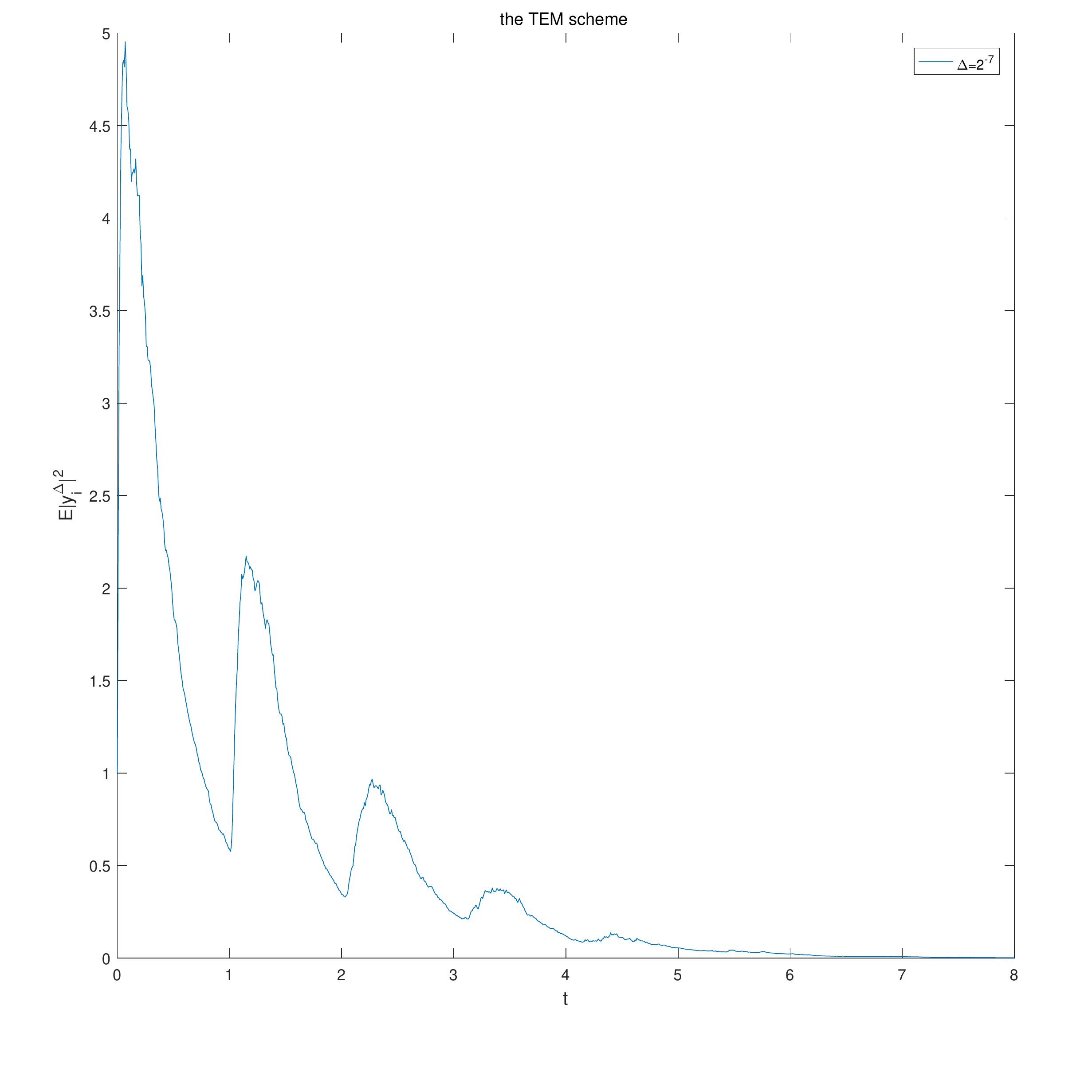}
   \end{center}
  \caption{The sample mean of $y_i^{\tr}$ with
  1000 sample points, 
  $\tr=2^{-7}$ and $t\in[0,~8]$.}
\label{stabplot2}
\end{figure}
\begin{figure}[htp]
  \begin{center}
\includegraphics[width=14cm,height=7cm]{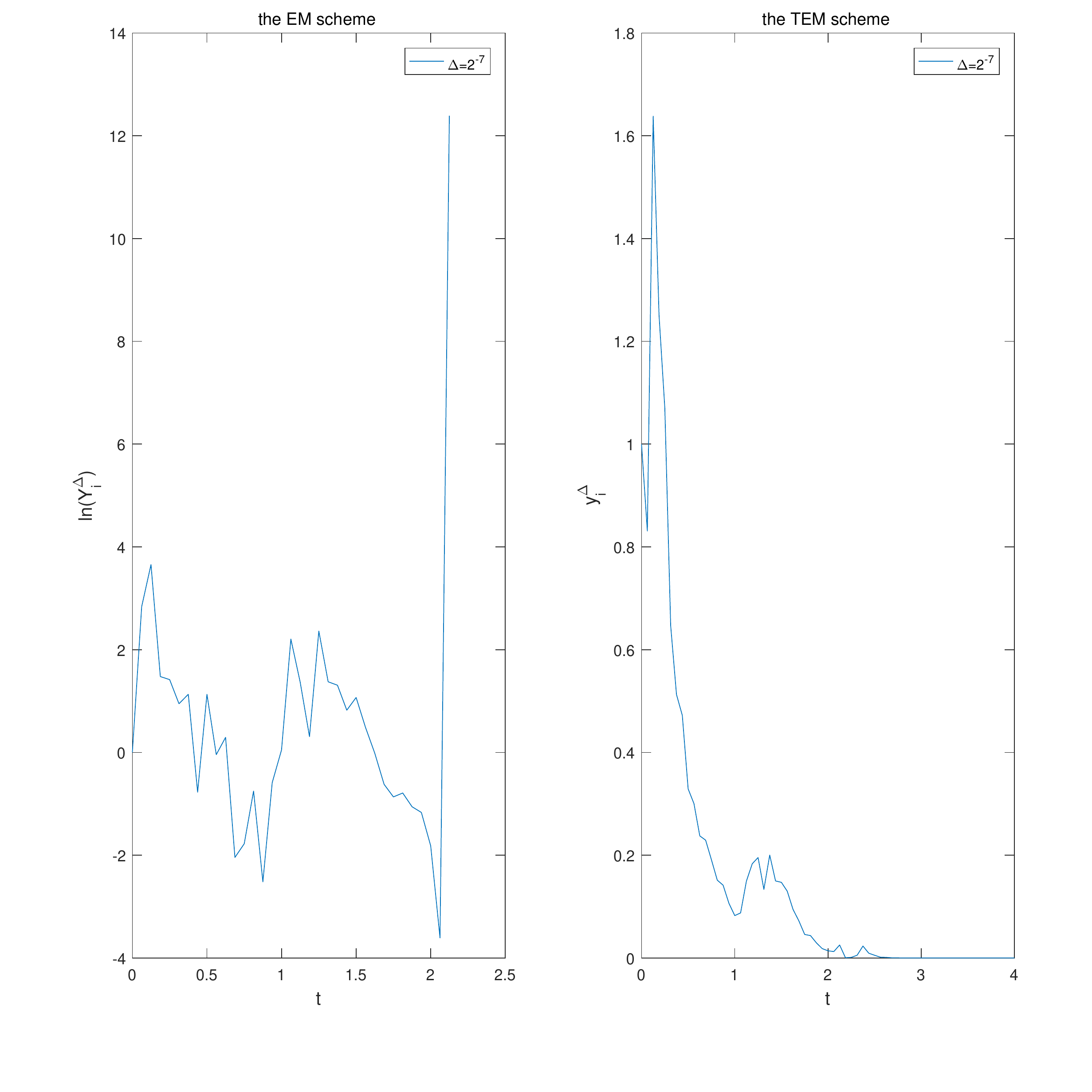}
   \end{center}
  \caption{The sample paths of  $\ln Y_i^{\tr}$ by the EM and
   of the TEM solution $ y_i^{\tr}$ defined in (\ref{s3.28})   
   with $\tr=2^{-7}$ and $t\in[0,~4]$. }
\label{stabplot1}
\end{figure}
}
\end{expl}

\section{Conclusions}\label{Con}

In this paper we construct an explicit numerical scheme under the weakly local Lipschitz condition and  the Khasminskii-type condition, which  numerical solutions are  bounded and converge to the exact solutions in $q$th moment for $q>0$. The $ {1}/{2}$ order convergence rate is obtained for the TEM scheme. Moreover, in order to realize the long time dynamical behavior we propose a more precise TEM scheme.
The exponential stability is kept well by the numerical solutions of the TEM for a large kind of nonlinear SDDEs.

\end{document}